\numberwithin{equation}{section} \numberwithin{figure}{section}
\newtheoremstyle{theorem}{6pt}{6pt}{\itshape}{}{\bfseries}{.}{.5em}{}
\newtheoremstyle{definition}{6pt}{6pt}{\upshape}{}{\bfseries}{.}{.5em}{}
\theoremstyle{theorem}
\newtheorem{theorem}{Theorem}[section]
\newtheorem{lemma}{Lemma}[section]
\theoremstyle{definition}
\begin{document}
\date{}

\title{Limit cycle bifurcations near a double homoclinic loop with a nilpotent saddle of order 2
\thanks{The project was supported by National Natural Science Foundation of China (11271261 and 11431008).}}

\author{Huanhuan Tian\thanks{Author for correspondence: tianhuanhuan888@163.com.}\\
\small\emph{ Department of Mathematics, Shanghai Normal University,}\\
\small\emph{ Shanghai 200234,  China }}

 \date{}
 \maketitle

 \noindent{\bf Abstract:} In this paper, we deal with limit cycle bifurcations near a double homoclinic loop with a nilpotent saddle of order 2 by studying expansions of the first order Melnikov functions near the loop and coefficients in these expansions. More precisely, we prove that the perturbed system can have 11, 13, 14 or 16 limit cycles in a neighborhood of the loop under certain conditions. Finally, we give an example to illustrate the effectiveness of our main results.

\vspace{0.05in}

\noindent {\bf Keywords:} homoclinic loop; saddle; Hamiltonian;  Melnikov; limit cycle bifurcation.

\noindent {\bf MSC:}  34C05; 34C07;  37G15.

\section{Introduction}
Consider a near-Hamiltonian system of the form
\begin{equation}\label{1}
\dot{x}=H_y+\epsilon p(x,y,\delta),\ \
\dot{y}=-H_x+\epsilon q(x,y,\delta)
\end{equation}
where $H(x,y)$, $p(x,y,\delta)$, $q(x,y,\delta)$ are $C^\infty$ functions, $\delta\in D\subset\mathbb{R}^m$ with $D$ a compact set and $\epsilon\geq0$ is a small parameter. For $\epsilon=0$, \eqref{1} becomes a Hamiltonian system
\begin{equation}\label{2}
\dot{x}=H_y,\ \ \dot{y}=-H_x.
\end{equation}
For \eqref{2} we assume that there exists a family of periodic orbits $L_{h}, h\in J$ defined by $H(x,y)=h$ with J an open interval and $\{L_h\}$ has a center or an invariant curve as its boundary. For \eqref{1}, the main task is to study the number of limit cycles  bifurcated from the periodic orbits $\{L_{h}\}$ of the unperturbed system \eqref{2}. For this case, the first order Melnikov function
\begin{equation}\nonumber
M(h,\delta)=\oint_{L_{h}}qdx-pdy
\end{equation}
plays an important role. See \cite{c3,c4}.

When the center is elementary and the invariant curve is a homoclinic loop or a double homoclinic loop with a hyperbolic saddle, or a heteroclinic loop with more than one hyperbolic saddle, there have been many contributions in studying the corresponding bifurcation of limit cycles. See \cite{c1,c5,c6,c7,c8,c9,c10,c11,c12,c13,c21} and references therein. When the critic  point is nilpotent and  at the origin, by  \cite{c1}, without loss of generality,  $H(x,y)$ can be expanded as
\begin{equation}\label{12}
H(x,y)=\frac{1}{2}y^2+\sum_{i+j\geq3}h_{i,j}x^iy^j
\end{equation}
for $(x,y)$ near $(0,0)$. And by the implicit function theorem, there exists a unique $C^\infty$ function $\varphi(x)$ such that $H_y(x,\varphi(x))=0$ for $|x|$ sufficiently small. Thus, we have
\begin{equation}\label{13}
H(x,\varphi(x))=\sum\limits_{j\geq k}h_jx^j,\ \ h_k\neq 0,\ \ k\geq3
\end{equation}
in which $k$, $h_k$ can be used to judge the type of the singularity and the expressions of $h_j,\ j=3,4,\dots,14$ are shown in the Appendix. For more references, see Section 3.4 in \cite{c1} or \cite{c14}.
For the nilpotent critical point bifurcations, there also have been many works. See \cite{c1,c2,c15,c16,c17,c18,c19,c20,c22,c23} for instance. In \eqref{13}, if $h_3\neq0$, the origin is a cusp of order 1. Limit cycle bifurcations of system \eqref{1} near a homoclinic loop passing through it has been studied in \cite{c15}. And 3, 5 or 6 limit cycles are gotten under some conditions.

If $h_3=0,\ h_4>0$, then the origin is a nilpotent center of order 1. In this case, the asymptotic expansion of $M$ at $h=0$ has been studied in \cite{c16}, which can be used to find the limit cycles of system \eqref{1} in a neighborhood of the origin.

When $h_3=0,\ h_4<0$, the origin is a nilpotent saddle of order 1.  The works in \cite{c1,c2} studied the number of limit cycles for  system \eqref{1} near the loop which is  homoclinic or double homoclinic. As a result, on some conditions there can exist 1,2,3,4,5,6,8,10 or 12 limit cycles.

When $h_3=h_4=0$, $h_5\neq0$, the origin is a cusp of order 2, which is a more degenerate case. In \cite{c17} and \cite{c18}, it was proved that 3,5,7,9,10 or 12 limit cycles can exist for system \eqref{1} near the homoclinic loop which passes through the origin.

If $h_3=h_4=h_5=0$, $h_6<0$, the origin is a nilpotent saddle of order 2.  In this paper, we will study the number of limit cycles bifurcated from a double homoclinic loop with a nilpotent saddle of order 2. From \cite{c2}, we know that a double homoclinic loop with a nilpotent saddle consists of two homoclinic loops both of cuspidal type or smooth type. In view of the similarity in proof and effectiveness of finding more limit cycles, we just study the problem for  cuspidal type (as shown in Figure 1). Our main results and proof are presented in the following two sections.
\begin{figure}[t]
\centering
{\includegraphics[width=7cm]{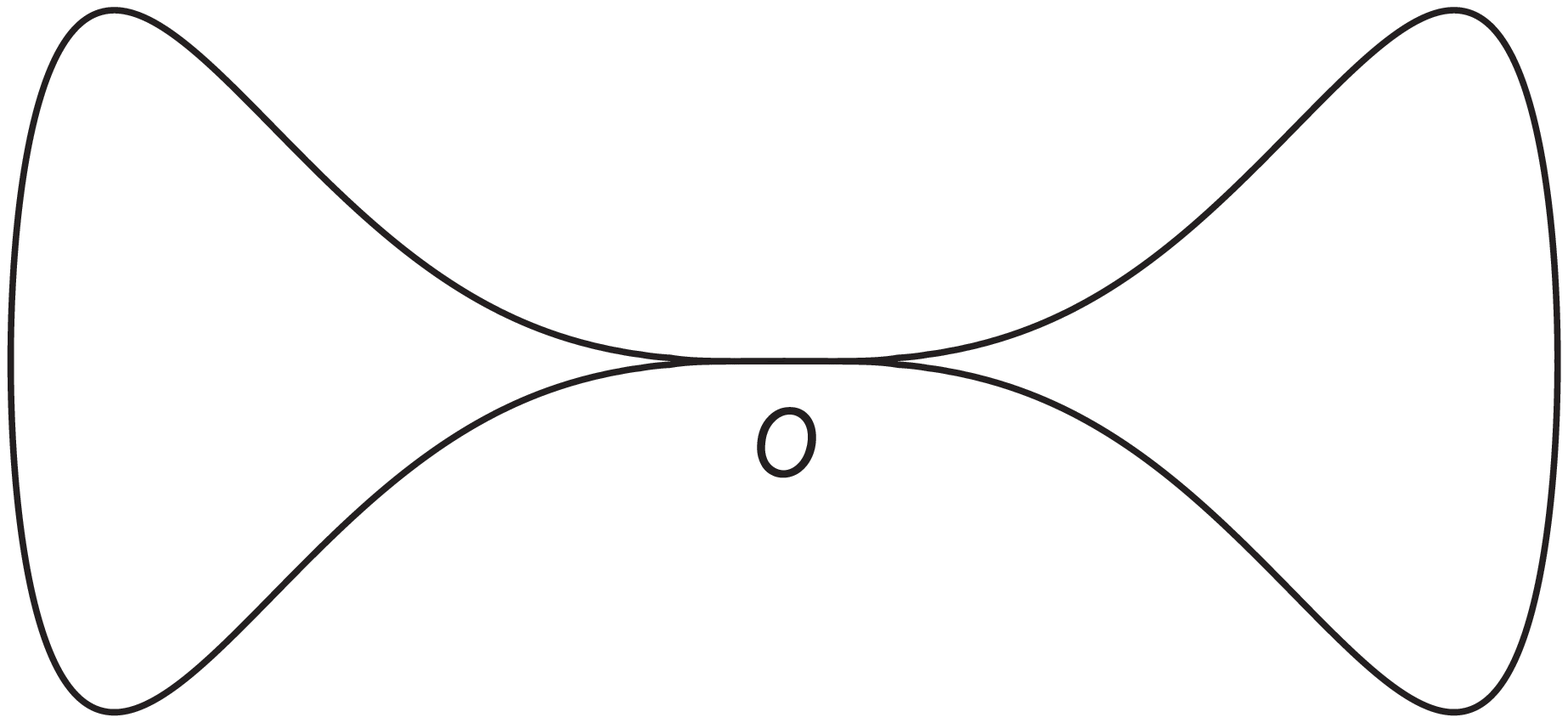}}\\
{Figure 1. \ Double homoclinic loop of cuspidal type with a nilpotent saddle}
\end{figure}
\vspace{0.15in}
%\noindent {\bf\large Acknowledgement}\\
%{The author would like to give her grateful thanks to professor Han Maoan for constant support and valuable suggestions on this paper.}
\section{Main results and proof}
Now we  suppose that  the unperturbed system  \eqref{2} has a nilpotent saddle of order 2 at the origin and a double homoclinic loop of cuspidal type $L_0^*=L_0\cup\tilde L_0$ defined by $H(x,y)=0$ passes through it, where  $L_0=L_0^*|_{x\geq0},\ \tilde L_0=L_0^*|_{x\leq0}$. Then we can assume that  \eqref{12} and \eqref{13} hold with
\begin{equation}\label{3}
h_3=h_4=h_5=0,\ \ h_6<0.
\end{equation}
Otherwise, we suppose that the level curves of $H(x,y)=h$ define three families of periodic orbits $L_h$, $\tilde L_h$, $L^*_h$ (see Figure 2). By \eqref{12}  we can easily see $L_0^*$ is oriented clockwise and further, by Lemma 3.1.2(i) in \cite{c1}, these periodic orbits can be denoted as $L_h=\{H(x,y)=h,\ 0<-h\ll1,\ x>0\}$, $\tilde L_h=\{H(x,y)=h,\ 0<-h\ll1,\ x<0\}$, $L_h^*=\{H(x,y)=h,\ 0<h\ll1\}$  respectively, which yield three Melnikov functions
\begin{eqnarray}\label{4}
\begin{aligned}
M(h,\delta)=\oint_{L_{h}}qdx-pdy,\\
\tilde M(h,\delta)=\oint_{\tilde L_{h}}qdx-pdy,\\
M^*(h,\delta)=\oint_{L^*_{h}}qdx-pdy.
\end{aligned}
\end{eqnarray}
\begin{figure}[t]
\centering
{\includegraphics[width=7cm]{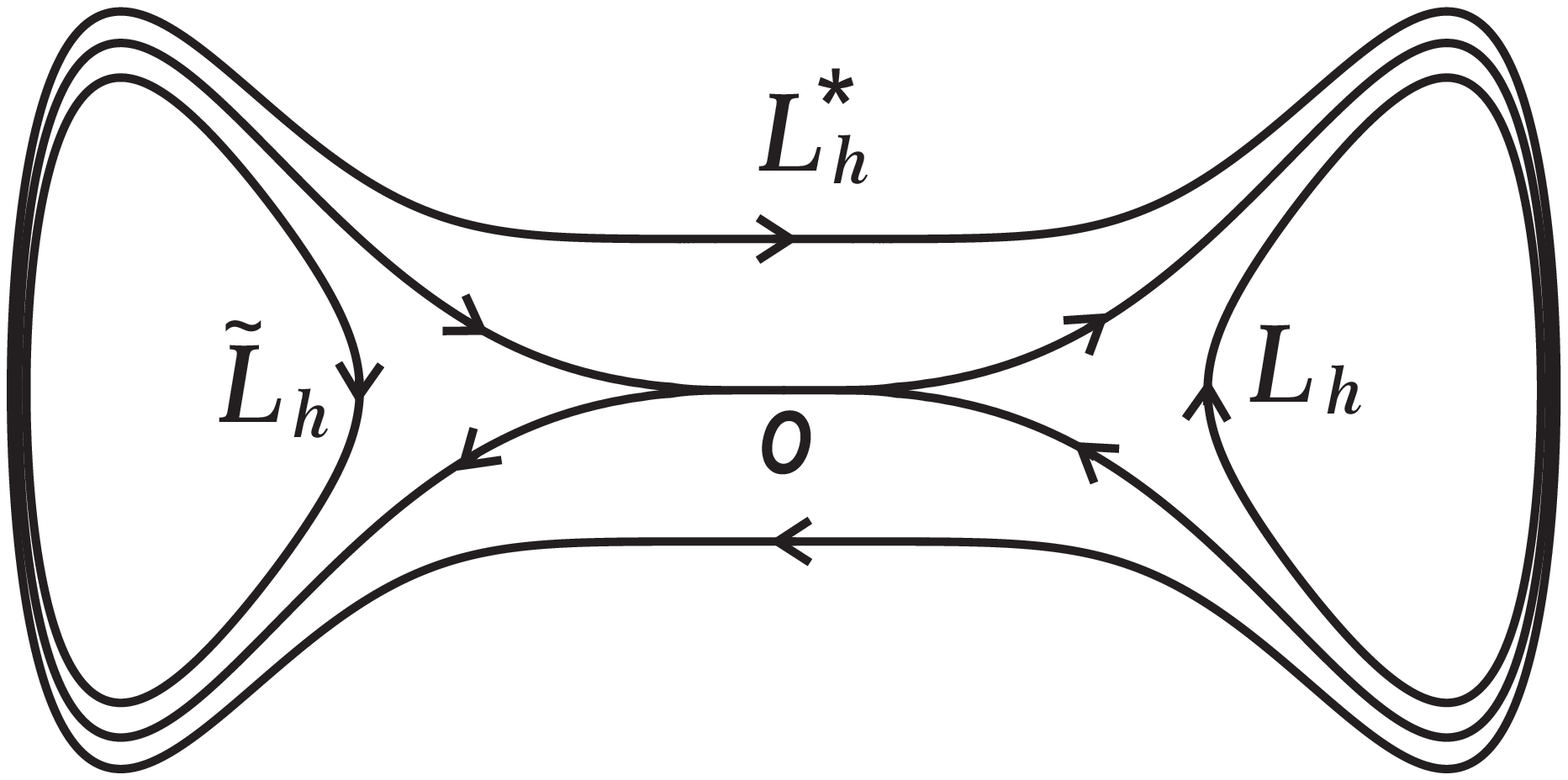}}\\
{Figure 2.\ Three families of periodic orbits near the loop}
\end{figure}
We will give their expressions in the following  lemma by combining with  Theorem 2.1, Lemma 2.2, Lemma 2.3, Lemma 2.5 and Lemma 2.6 in \cite{c2}. Let
\begin{equation}\label{14}
p(x,y,\delta)=\sum\limits_{i+j\geq0}a_{ij}x^iy^j,\ \ q(x,y,\delta)=\sum\limits_{i+j\geq0}b_{ij}x^iy^j.
\end{equation}
\begin{lemma}\label{l1}
Consider systems \eqref{1} and suppose that system \eqref{2} has a double homoclinic loop $L_0^*$ of cuspidal type as stated before. Let \eqref{12}, \eqref{13}, \eqref{3} and \eqref{14} hold. Then for the three Melnikov functions in \eqref{4}, we have
\begin{eqnarray}\nonumber
M(h,\delta)&=&\varphi(h,\delta)-\frac{h\ln|h|}{12}I^*_{12}(h)+{|h|}^{\frac{2}{3}}(\tilde A_0I^*_{10}(h)+\tilde A_1I^*_{11}(h){|h|}^{\frac{1}{6}}+\tilde A_3I^*_{13}(h){|h|}^{\frac{1}{2}}\\ \nonumber
&&+\tilde A_4I^*_{14}(h){|h|}^{\frac{2}{3}}),\ \ \ \ \ \   {\rm for}\  0<-h\ll1,\\ \nonumber
\tilde M(h,\delta)&=&\tilde \varphi(h,\delta)-\frac{h\ln|h|}{12}I^*_{12}(h)+{|h|}^{\frac{2}{3}}(\tilde A_0I^*_{10}(h)-\tilde A_1I^*_{11}(h){|h|}^{\frac{1}{6}}-\tilde A_3I^*_{13}(h){|h|}^{\frac{1}{2}}\\ \nonumber
&&+\tilde A_4I^*_{14}(h){|h|}^{\frac{2}{3}}),\ \ \ \ \ \ {\rm for}\ 0<-h\ll1,\\ \nonumber
M^*(h,\delta)&=&\varphi^*(h,\delta)-\frac{h\ln h}{6}J^*_{11}(h)+2\bar A_0 J^*_{10}(h)h^{\frac{2}{3}}+2\bar A_2 J^*_{12}(h)h^{\frac{4}{3}},\ \ {\rm for}\  0<h\ll1
\end{eqnarray}
where $\varphi(h,\delta),\tilde \varphi(h,\delta) \ \rm{and} \ \varphi^*(h,\delta)$ are $C^\infty$ functions for $h$ near $h=0$,
\begin{eqnarray}\label{15}
\begin{aligned}
&\tilde A_0=-\frac{3}{4}\int_0^1\frac{vdv}{\sqrt{1-v^6}}\approx-0.5258182896<0,\\
&\tilde A_1=-\frac{3}{5}\int_0^1\frac{dv}{\sqrt{1-v^6}}\approx-0.7285951942<0,\\
&\tilde A_3=-\frac{3}{7}\left[\int_0^1\frac{v^4dv}{\sqrt{1-v^6}(1+\sqrt{1-v^6})}-1\right]\approx0.3200718001>0,\\
&\tilde A_4=-\frac{3}{8}\left[\int_0^1\frac{v^3dv}{\sqrt{1-v^6}(1+\sqrt{1-v^6})}-\frac{1}{2}\right]\approx0.0808471737>0,\\
&\bar A_0=\frac{3}{4}\int_0^\infty\frac{dv}{\sqrt{1+v^6}}\approx1.051636580>0,\\
&\bar A_2=-\frac{3}{8}\int_0^\infty\frac{vdv}{\sqrt{1+v^6}(v^3+\sqrt{1+v^6})}\approx-0.1616943474<0,
\end{aligned}
\end{eqnarray}
and
\begin{eqnarray}\label{16}
\begin{aligned}
&I^*_{10}=\tilde r_{00}+(\frac{9}{10}\tilde r_{01}-\frac{1}{10}\tilde r_{60})h+O(h^2),\ \ I^*_{11}=\tilde r_{10}+(\frac{9}{11}\tilde r_{11}-\frac{2}{11}\tilde r_{70})h+O(h^2),\\
&I^*_{12}=\tilde r_{20}+(\frac{3}{4}\tilde r_{21}-\frac{1}{4}\tilde r_{80})h+O(h^2),\ \ I^*_{13}=\tilde r_{30}+O(h),\ \ I^*_{14}=\tilde r_{40}+O(h),\\
&J^*_{10}=r^{(1)}_{01}+(\frac{9}{10}r^{(1)}_{03}-\frac{1}{10}r^{(1)}_{31})h+O(h^2),\ \ J^*_{11}=r^{(1)}_{11}+(\frac{3}{4}r^{(1)}_{13}-\frac{1}{4}r^{(1)}_{41})h+O(h^2),\\
&J^*_{12}=r^{(1)}_{21}+O(h)
\end{aligned}
\end{eqnarray}
where  $\tilde r _{kl}$, $r^{(1)}_{kl}$, $k,l\geq0$  are functions of $a_{ij},\ b_{ij}$, $h_{ij}$. See the Appendix for their concrete expressions.
\end{lemma}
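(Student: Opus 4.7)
The plan is to obtain each of the three expansions by specializing the general Melnikov machinery from \cite{c2} to the present setting, where \eqref{3} forces the origin to have local Hamiltonian $H\sim \tfrac{1}{2}y^2+h_6x^6$ with $h_6<0$. First I would verify that the hypotheses of Theorem 2.1 and Lemmas 2.2, 2.3, 2.5, 2.6 of \cite{c2} are satisfied: the normal form \eqref{12}, the vanishing conditions $h_3=h_4=h_5=0$ on the reduced Hamiltonian \eqref{13}, and the geometric assumption that $L_0^*$ is a double homoclinic loop of cuspidal type. Since these are exactly the conditions under which the cited results give the structure of $M$ near $h=0$, the task reduces to assembling the pieces correctly.

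Next I would treat each Melnikov function individually. For the right loop $L_h$, I would split $qdx-pdy$ into a smooth part whose line integral is $C^\infty$ in $h$ (contributing $\varphi(h,\delta)$) and a singular part concentrated near the cusp; the latter is handled by the rescaling $x=|h|^{1/6}u$, $y=|h|^{1/2}v$, which turns the level set into $v^2+h_6u^6=-1$ and exposes the characteristic exponents $|h|^{2/3+k/6}$ with $k=0,1,3,4$ together with the resonant logarithmic term $h\ln|h|/12$. Lemma 2.2 of \cite{c2} then extracts the coefficient functions $I_{10}^*,\ldots,I_{14}^*$ in \eqref{16} from the expansions \eqref{14} of $p,q$, and Lemmas 2.5, 2.6 evaluate the universal Abelian integrals yielding the constants $\tilde A_0,\tilde A_1,\tilde A_3,\tilde A_4$ in \eqref{15}. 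For $\tilde M(h,\delta)$, the left loop $\tilde L_h$ is the reflection $x\mapsto -x$ of $L_h$ in the model, so every coefficient produced by an odd-in-$u$ monomial flips sign while the even-in-$u$ contributions are preserved; this accounts exactly for the sign pattern $+\tilde A_0,-\tilde A_1,-\tilde A_3,+\tilde A_4$ displayed in the statement. For the outer orbit $L^*_h$, the singular contributions from the two passes near the saddle add up: the odd-in-$u$ terms cancel, while the even-in-$u$ terms double, producing the factor $2$ in front of $\bar A_0$ and $\bar A_2$ and the coefficient $1/6=2\cdot (1/12)$ in the logarithmic term. The rescaling for $L_h^*$ now uses the unbounded model $v^2+h_6u^6=1$, which is why $\bar A_0,\bar A_2$ are computed as integrals on $[0,\infty)$ rather than on $[0,1]$; Lemmas 2.3, 2.5, 2.6 of \cite{c2} then deliver the functions $J_{10}^*, J_{11}^*, J_{12}^*$ and the values of $\bar A_0, \bar A_2$.

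The main obstacle is not conceptual but one of careful bookkeeping: keeping track of which fractional exponents survive, which reflection-symmetric cancellations occur, and how the expansion coefficients $\tilde r_{kl}$ and $r^{(1)}_{kl}$ depend on the $a_{ij}, b_{ij}, h_{ij}$. Because this computation has already been performed in the general framework of \cite{c2}, the proof essentially consists of matching notations, confirming that the exponents indexed by $k=2,5$ do not contribute (as $k=2$ is absorbed into the $h\ln|h|$ term and $k=5$ into $\varphi$), and reading off the coefficients from the appendix formulas. Once this matching is carried out, the three displayed expansions follow directly.
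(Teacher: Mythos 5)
Your proposal matches the paper exactly in approach: the paper gives no independent proof of this lemma but obtains it verbatim by combining Theorem 2.1 and Lemmas 2.2, 2.3, 2.5, 2.6 of \cite{c2} under the hypotheses \eqref{12}, \eqref{13}, \eqref{3}, \eqref{14}, which is precisely what you do. Your additional sketch of the mechanism (the rescaling $x=|h|^{1/6}u$, the resonance at $k=2$ producing the $h\ln|h|$ term, the absorption of the $k=5$ contribution into the smooth part, the odd/even sign pattern under $x\mapsto-x$ for $\tilde M$, and the doubling/cancellation for $M^*$) is consistent with the stated formulas and is a correct reconstruction of what the cited results deliver.
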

%\begin{remark}\label{17}
%The expressions of $\tilde r_{kl}$ and $r^{(1)}_{kl}$ in \eqref{16} can be calculated through running the programs in \cite{c15} by Maple-17.
%\end{remark}
By Lemma \ref{l1}, we will study the expansions of the three Melnikov functions at $h=0$ and formulas of coefficients in these expansions in the following theorem.
\begin{theorem}\label{t1}
Consider system \eqref{1}. Let \eqref{12}, \eqref{13}, \eqref{3}, \eqref{14} hold, and $L_0^*=L_0\cup\tilde L_0$ be a double homoclinic loop of cuspidal type defined by $H(x,y)=0$  where $L_0=L_0^*|_{x\geq0},\ \tilde L_0=L_0^*|_{x\leq0}$. Then for the three functions in \eqref{4}, we have
\begin{eqnarray}\label{5.1}
M(h,\delta)\!\!\!&=\!\!\!&c_0+c_1|h|^{\frac{2}{3}}+c_2|h|^{\frac{5}{6}}+c_3h\ln|h|+c_4h+c_5|h|^{\frac{7}{6}}+c_6|h|^{\frac{4}{3}}+c_7|h|^{\frac{5}{3}}\nonumber\\
\!\!\!&\!\!\!&+c_8|h|^{\frac{11}{6}}+c_9h^2\ln|h|+O(h^2),\ \ \ \ \ \   {\rm for}\  0<-h\ll1,\\ \label{5.2}
\tilde M(h,\delta)\!\!\!&=\!\!\!&\tilde c_0+c_1|h|^{\frac{2}{3}}-c_2|h|^{\frac{5}{6}}+c_3h\ln|h|+\tilde c_4h-c_5|h|^{\frac{7}{6}}+c_6|h|^{\frac{4}{3}}+c_7|h|^{\frac{5}{3}}\nonumber\\
\!\!\!&\!\!\!&-c_8|h|^{\frac{11}{6}}+c_9h^2\ln|h|+O(h^2),\ \ \ \ \ \   {\rm for}\  0<-h\ll1,\\ \label{5.3}
M^*(h,\delta)\!\!\!&=\!\!\!&c^*_0+2c_1^*h^{\frac{2}{3}}+2c_2^*h\ln h+c_3^*h+2c_4^*h^{\frac{4}{3}}+2c_5^*h^{\frac{5}{3}}+2c_6^*h^2\ln h\nonumber\\
\!\!\!&\!\!\!&+O(h^2),\ \ \ \ \ \   {\rm for}\  0<h\ll1
\end{eqnarray}
where
\begin{eqnarray}\label{10}
\begin{aligned}
&c_0=M(0,\delta)=\oint_{L_0}qdx-pdy,\ \ \tilde c_0=\tilde M(0,\delta)=\oint_{\tilde L_0}qdx-pdy,\\
&c_1=\tilde A_0\tilde r_{00},\ \ c_2=\tilde A_1\tilde r_{10},\ \ c_3=-\frac{1}{12}\tilde r_{20},\\
&c_4=\oint_{L_0}(p_x+q_y-\sigma_0-\sigma_1x-\sigma_3x^2)dt+O(|c_1|+|c_2|+|c_3|),\ \ {\rm if}\  \sigma_2=0,\\
&\tilde c_4=\oint_{\tilde L_0}(p_x+q_y-\sigma_0-\sigma_1x-\sigma_3x^2)dt+O(|c_1|+|c_2|+|c_3|),\ \ {\rm if} \ \sigma_2=0,\\
&c_5=\tilde A_3\tilde r_{30},\ \ c_6=\tilde A_4\tilde r_{40},\ \ c_7=-\tilde A_0(\frac{9}{10}\tilde r_{01}-\frac{1}{10}\tilde r_{60}), \\
&c_8=-\tilde A_1(\frac{9}{11}\tilde r_{11}-\frac{2}{11}\tilde r_{70}),\ \ c_9=-\frac{1}{12}(\frac{3}{4}\tilde r_{21}-\frac{1}{4}\tilde r_{80}),\\
&c_0^*=c_0+\tilde c_0,\ \ c_1^*=\bar A_0r_{01}^{(1)},\ \ c_2^*=-\frac{1}{12}r_{11}^{(1)},\\
%c_3^*&=&\oint_{L^*_0}(p_x+q_y-\sigma_0-\sigma_3x^2)dt+O(|c_1^*|+|c_2^*|),\ \  {\rm if} \ a_{11}+2b_{02}=2a_{20}+b_{11}=0,\\
&c_3^*=\oint_{L^*_0}(p_x+q_y-\sigma_0-\sigma_1x-\sigma_3x^2)dt+O(|c_1|+|c_2|+|c_3|),\ \  {\rm if} \ \sigma_2=0,\\
&c_4^*=\bar A_2r_{21}^{(1)},\ \ c_5^*=\bar A_0(\frac{9}{10}r_{03}^{(1)}-\frac{1}{10}r_{31}^{(1)}),\ \ c_6^*=-\frac{1}{12}(\frac{3}{4}r_{13}^{(1)}-\frac{1}{4}r_{41}^{(1)}),
\end{aligned}
\end{eqnarray}
$\sigma_0=(p_x+q_y)|_{x=y=0},\ \ \sigma_1=(p_{xx}+q_{yx})|_{x=y=0},\ \ \sigma_2=(p_{xy}+q_{yy})|_{x=y=0},\ \  \sigma_3=\frac{1}{2}(p_{xxx}+q_{yxx})|_{x=y=0}$ and $\tilde r _{ij}$,  $r^{(1)}_{kl}$, $i,j,k,l\geq0$ appear in \eqref{16}, $\tilde A_0, \tilde A_1, \tilde A_3, \tilde A_4, \bar A_0, \bar A_2$ appear in \eqref{15}.
\end{theorem}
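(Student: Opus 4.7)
The plan is to obtain the expansions (2.1)--(2.3) by combining Lemma \ref{l1} with the series (1.16) and collecting terms according to the power of $|h|^{1/6}$, with logarithmic corrections at integer powers. Since Lemma \ref{l1} already splits each Melnikov function into a smooth part plus explicit non-smooth blocks, the bulk of the proof reduces to straightforward bookkeeping; the only delicate point is the coefficient of $h$.

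First I would expand the $C^\infty$ parts as $\varphi(h,\delta)=\varphi_0+\varphi_1 h+O(h^2)$ (and similarly for $\tilde\varphi,\varphi^*$) and substitute (1.16) into Lemma \ref{l1}. For $M(h,\delta)$ the block $|h|^{2/3}\tilde A_0 I^*_{10}(h)$ contributes $\tilde A_0\tilde r_{00}|h|^{2/3}$ at leading order and, via the $O(h)$ term of $I^*_{10}$, a term proportional to $|h|^{5/3}$ at next order (using $h=-|h|$). The blocks multiplied by $\tilde A_1,\tilde A_3,\tilde A_4$ are treated identically, producing the $|h|^{5/6},|h|^{7/6},|h|^{4/3},|h|^{11/6}$ coefficients; the $-(h\ln|h|/12)I^*_{12}(h)$ block gives $c_3=-\tilde r_{20}/12$ and the $h^2\ln|h|$ contribution $c_9$. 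Reading off signs for $\tilde M$ (where $\tilde A_1,\tilde A_3$ enter with opposite signs) yields (2.2); the analogous expansion of $M^*$ yields the starred coefficients except $c_3^*$. Evaluating Lemma \ref{l1} at $h=0$ kills every non-smooth block, giving $c_0=M(0,\delta)=\oint_{L_0} q\,dx-p\,dy$ and likewise $\tilde c_0$; since $L_0^*=L_0\cup\tilde L_0$, continuity gives $c_0^*=c_0+\tilde c_0$.

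The only coefficients not extracted by direct substitution are $c_4,\tilde c_4,c_3^*$, i.e.\ the $h$-coefficients of the smooth parts $\varphi,\tilde\varphi,\varphi^*$. Under the hypothesis $\sigma_2=0$ the constants $\tilde r_{20}$ and $r^{(1)}_{11}$ (appearing in $c_3$ and $c_2^*$) collapse to an expression bounded by $|c_1|+|c_2|+|c_3|$, so that $\varphi'(0,\delta)$ can be identified via the classical derivative-of-Melnikov formula $\oint_{L_0}(p_x+q_y)\,dt$. This integral diverges at the nilpotent cusp, but subtracting the Taylor polynomial of $p_x+q_y$ at the origin through order two---namely $\sigma_0+\sigma_1 x+\sigma_3 x^2$, with the $\sigma_2 xy$ term absent by assumption---regularises it along $L_0$, and the same regularisation works on $\tilde L_0,L_0^*$. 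The remainder absorbed into $O(|c_1|+|c_2|+|c_3|)$ collects the recycled lower-order contributions from the non-smooth blocks.

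The main obstacle is this last step: one must verify both that $\oint_{L_0}(p_x+q_y-\sigma_0-\sigma_1 x-\sigma_3 x^2)\,dt$ converges at the cusp precisely when $\sigma_2=0$, and that stripping the non-smooth blocks of Lemma \ref{l1} leaves an error of size no worse than $O(|c_1|+|c_2|+|c_3|)$. Everything else is substitution of (1.16) into the formulas of Lemma \ref{l1} and collection of monomials in $|h|^{1/6}$ and $h^j\ln|h|$.
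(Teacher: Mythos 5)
Your extraction of $c_0,\tilde c_0,c_0^*$ and of the coefficients $c_1,c_2,c_3,c_5,\dots,c_9$ and the starred coefficients by substituting \eqref{16} into Lemma \ref{l1} and sorting powers of $|h|^{1/6}$ (with the sign flip from $h=-|h|$ producing $c_7,c_8,c_9$) matches the paper exactly and is fine. The problem is that you have explicitly deferred, rather than proved, the one statement that carries the weight of the theorem: the formulas for $c_4,\tilde c_4,c_3^*$, i.e.\ that $\oint_{L_0}(p_x+q_y-\sigma_0-\sigma_1x-\sigma_3x^2)\,dt$ converges precisely when $\sigma_2=0$ and that the discrepancy with $c_4$ is $O(|c_1|+|c_2|+|c_3|)$. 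Calling this ``the main obstacle'' and leaving it there means the proof is not complete; moreover your stated mechanism is slightly off: the obstruction to convergence is the linear term $\sigma_2 y$ in $p_x+q_y$ (not an ``$xy$ term''), and it is not $\tilde r_{20}$, $r^{(1)}_{11}$ that ``collapse'' to something bounded by $|c_1|+|c_2|+|c_3|$ --- rather, when $\sigma_2=0$ one inverts \eqref{19} to express $\sigma_0,\sigma_1,\sigma_3$ linearly in $\tilde r_{00},\tilde r_{10},\tilde r_{20}$, hence in $c_1,c_2,c_3$.

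The paper closes this gap as follows, and your proof needs some version of it. Write $p_x+q_y=\sigma_0+\sigma_1x+\sigma_2y+\sigma_3x^2+x^3f+xyg+y^2h$ and use $M_h=\oint_{L_h}(p_x+q_y)\,dt$ (Lemma 3.1.2(ii) of \cite{c1}) to split $M-M(0,\delta)$ into seven pieces $m_0,\dots,m_6$. Each $m_j$ is itself an integral of the form $\oint_{L_h}q\,dx$ for an explicit choice of $q$ (e.g.\ $q=y,\ xy,\ \tfrac12y^2,\dots$), so the already-established expansion \eqref{5.1} applies to each $m_j$, giving $m_j=c_{j1}|h|^{2/3}+c_{j2}|h|^{5/6}+c_{j3}h\ln|h|+c_{j4}h+O(|h|^{7/6})$. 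The formulas \eqref{19} then show $c_{j1}=c_{j2}=c_{j3}=0$ for $j=4,5,6$, so $(m_4+m_5+m_6)'(h)\to c_{44}+c_{54}+c_{64}$ as $h\to0$, which is exactly the finiteness of $\oint_{L_0}(p_x+q_y-\sigma_0-\sigma_1x-\sigma_2y-\sigma_3x^2)\,dt$; since $\oint_{L_0}y\,dt$ diverges, dropping the $\sigma_2y$ subtraction preserves finiteness iff $\sigma_2=0$. Finally $c_4=\sigma_0c_{04}+\sigma_1c_{14}+\sigma_2c_{24}+\sigma_3c_{34}+c_{44}+c_{54}+c_{64}$, and with $\sigma_2=0$ the first block is a fixed linear combination of $c_1,c_2,c_3$, which is the claimed $O(|c_1|+|c_2|+|c_3|)$ remainder. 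Without this bootstrapping argument (or an equivalent direct estimate at the cusp) your proposal does not establish \eqref{10}.
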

\begin{proof}
By Lemma \ref{l1}, it is easy to see that
\begin{eqnarray*}
c_0&=&\varphi(0,\delta)=M(0,\delta)=\oint_{L_0}qdx-pdy,\\
\tilde c_0&=&\tilde \varphi(0,\delta)=\tilde M(0,\delta)=\oint_{\tilde L_0}qdx-pdy,\\
c_0^*&=&\varphi^*(0,\delta)=M^*(0,\delta)=\oint_{L^*_0}qdx-pdy=c_0+\tilde c_0.
\end{eqnarray*}
Now we study the formulas of $c_4$, $\tilde c_4$, $c_3^*$. By \eqref{14}, it follows that
\begin{eqnarray*}
\sigma_0&=&(p_x+q_y)|_{x=y=0}=a_{10}+b_{01},\\
\sigma_1&=&(p_{xx}+q_{yx})|_{x=y=0}=2a_{20}+b_{11},\\
\sigma_2&=&(p_{xy}+q_{yy})|_{x=y=0}=a_{11}+2b_{02},\\
\sigma_3&=&\frac{1}{2}(p_{xxx}+q_{yxx})|_{x=y=0}=3a_{30}+b_{21}.
\end{eqnarray*}
Then by the Appendix, we can calculate the following formulas
\begin{eqnarray}\nonumber
\tilde r_{00}&=&r_{01}^{(1)}=2\sqrt{2}(-h_6)^{-\frac{1}{6}}\sigma_0,\\ \label{19}
\tilde r_{10}&=&2\sqrt{2}\left[\left(\frac{1}{3}h_7(-h_6)^{-\frac{4}{3}}-h_{1,2}(-h_6)^{-\frac{1}{3}}\right)\sigma_0+(-h_6)^{-\frac{1}{3}}\sigma_1\right],\\ \nonumber
\tilde r_{20}&=&r_{11}^{(1)}=\frac{\sqrt{2}}{4}(-h_6)^{-\frac{5}{2}}[(24h_6^2h_{0,3}h_{2,1}+12h_6^2h_{1,2}^2-8h_{6}^2h_{2,2}+4h_6h_7h_{1,2}-4h_6h_8\\ \nonumber
&&+3h_7^2)\sigma_0+(-8h_6^2h_{1,2}-4h_6h_7)\sigma_1-8h_6^2h_{2,1}\sigma_2+8h_6^2\sigma_3]
\end{eqnarray}
which can deduce the expressions of $c_1,\ c_2,\ c_3,\ c_1^*,\ c_2^*$.
Let $p_x+q_y=\sigma_0+\sigma_1x+\sigma_2y+\sigma_3x^2+x^3f(x)+xyg(x,y)+y^2h(y)$. By Lemma 3.1.2(ii) in \cite{c1}, we have
\begin{eqnarray}\label{6}
M(h,\delta)&=&M(0,\delta)+\int_0^hM_h(h,\delta)dh=M(0,\delta)+\int_0^h\left[\oint_{L_h}(p_x+q_y)dt\right]dh\nonumber\\
&=&M(0,\delta)+\sigma_0m_0(h)+\sigma_1m_1(h)+\sigma_2m_2(h)+\sigma_3m_3(h)\\
&&+m_4(h)+m_5(h)+m_6(h)\nonumber
\end{eqnarray}
where
\begin{eqnarray*}
m_0(h)&=&\int_0^h\left(\oint_{L_h}dt\right)dh,\ \ m_1(h)=\int_0^h\left(\oint_{L_h}xdt\right)dh,\\
m_2(h)&=&\int_0^h\left(\oint_{L_h}ydt\right)dh,\ \ m_3(h)=\int_0^h\left(\oint_{L_h}x^2dt\right)dh,\\
m_4(h)&=&\int_0^h\left(\oint_{L_h}x^3f(x)dt\right)dh,\ \ m_5(h)=\int_0^h\left(\oint_{L_h}xyg(x,y)dt\right)dh,\\
m_6(h)&=&\int_0^h\left(\oint_{L_h}y^2h(y)dt\right)dh.
\end{eqnarray*}
It is obvious that
\begin{eqnarray}\label{7}
(m_4+m_5+m_6)'&=&\oint_{L_h}(x^3f(x)+xyg(x,y)+y^2h(y))dt\nonumber\\
&=&\oint_{L_h}(p_x+q_y-\sigma_0-\sigma_1x-\sigma_2y-\sigma_3x^2)dt.
\end{eqnarray}
Next, taking $p=0$ and $q=y,\,xy,\,\frac{1}{2}y^2,\,x^2y,\,x^3f(x)y,\,\int_0^yxvg(x,v)dv,\,\int_0^yv^2h(v)dv$ in \eqref{6} respectively, we obtain
\begin{eqnarray}\nonumber
m_0(h)&=&\oint_{L_h}ydx-\oint_{L_0}ydx,\\ \nonumber
m_1(h)&=&\oint_{L_h}xydx-\oint_{L_0}xydx,\\ \nonumber
m_2(h)&=&\oint_{L_h}\frac{1}{2}y^2dx-\oint_{L_0}\frac{1}{2}y^2dx,\\ \label{20}
m_3(h)&=&\oint_{L_h}x^2ydx-\oint_{L_0}x^2ydx,\\ \nonumber
m_4(h)&=&\oint_{L_h}x^3f(x)ydx-\oint_{L_0}x^3f(x)ydx,\\ \nonumber
m_5(h)&=&\oint_{L_h}\left[\int_0^yxvg(x,v)dv\right]dx-\oint_{L_0}\left[\int_0^yxvg(x,v)dv\right]dx,\\ \nonumber
m_6(h)&=&\oint_{L_h}\left[\int_0^yv^2h(v)dv\right]dx-\oint_{L_0}\left[\int_0^yv^2h(v)dv\right]dx.
\end{eqnarray}
Then applying formula \eqref{5.1} to the functions $m_j(h)\ (0\leq j\leq6)$ in \eqref{20}, we yields
\begin{eqnarray}\label{8}
m_j(h)=c_{j1}|h|^{\frac{2}{3}}+c_{j2}|h|^{\frac{5}{6}}+c_{j3}h\ln|h|+c_{j4}h+O(|h|^{\frac{7}{6}})
\end{eqnarray}
where $c_{jk}$ are constants. Now, substituting \eqref{8} into \eqref{6} and comparing with \eqref{5.1}, we obtain
$$c_4=\sigma_0c_{04}+\sigma_1c_{14}+\sigma_2c_{24}+\sigma_3c_{34}+c_{44}+c_{54}+c_{64}.$$
%Thus $c_4=c_{44}+c_{54}+c_{64}+b_1c_1+b_2c_2+b_3c_3$, if $a_{11}+2b_{02}=0.$
Further, by the formulas of $c_1$, $c_2$, $c_3$ in \eqref{10} and \eqref{19}, it is easy to verify
\begin{eqnarray}\label{9}
c_{41}=c_{42}=c_{43}=c_{51}=c_{52}=c_{53}=c_{61}=c_{62}=c_{63}=0.
\end{eqnarray}
Thus, by \eqref{7}, \eqref{8} and \eqref{9}, it follows that
\begin{eqnarray*}
\oint_{L_0}(p_x+q_y-\sigma_0-\sigma_1x-\sigma_2y-\sigma_3x^2)dt&=&\lim\limits_{h\rightarrow 0}(m_4+m_5+m_6)'\\
&=&\lim\limits_{h\rightarrow 0}(c_{44}+c_{54}+c_{64}+O(|h|^{\frac{1}{6}}))\\
&=&c_{44}+c_{54}+c_{64}\in\mathbb{R}.
\end{eqnarray*}
From the above proof, we can see that  $\oint_{L_0}y dt$ is infinite. Thus, the integral $\oint_{L_0}(p_x+q_y-\sigma_0-\sigma_1x-\sigma_3x^2)dt$  is finite if and only if $\sigma_2=0$. Then, for $\sigma_2=0$,  $$c_4=\oint_{L_0}(p_x+q_y-\sigma_0-\sigma_1x-\sigma_3x^2)dt+b_1c_1+b_2c_2+b_3c_3$$ where $b_i$ are constants.
The formula for $\tilde c_4$ can be obtained in the same way. By the similar argument, we can obtain
$$c_3^*=\oint_{L^*_0}(p_x+q_y-\sigma_0-\sigma_1x-\sigma_2y-\sigma_3x^2)dt+\sigma_0c_{03}^*+\sigma_1c_{13}^*+\sigma_2c_{23}^*+\sigma_3c_{33}^*.$$
And also, $\oint_{L^*_0}(p_x+q_y-\sigma_0-\sigma_1x-\sigma_3x^2)dt$ is finite if and only if $\sigma_2=0$. Thus, we get the formulas of $c_3^*$ in \eqref{10}.
This ends the proof.
\end{proof}
By comparing with the expressions of $\tilde r_{ij}$ and $r^{(1)}_{kl}$, $i,j,k,l\geq0$ in the Appendix, we discover that
\begin{eqnarray*}
\begin{aligned}
\tilde r_{00}=r_{01}^{(1)},\ \ \ \ &\tilde r_{20}=r_{11}^{(1)},\ \ &\tilde r_{40}=r_{21}^{(1)},\ \ \ \ &\tilde r_{60}=r_{31}^{(1)}\\
\tilde r_{80}=r_{41}^{(1)},\ \ \ \ &\tilde r_{01}=r_{03}^{(1)},\ \ &\tilde r_{21}=r_{13}^{(1)}.\ \ \ \ &
\end{aligned}
\end{eqnarray*}
So, we have
\begin{eqnarray*}
c_1^*&=&\frac{\bar A_0}{\tilde A_0}c_1=-D_1c_1,\ \ c_2^*=c_3,\ \ c_4^*=\frac{\bar A_2}{\tilde A_4}c_6=-D_2c_6,\\
c_5^*&=&-\frac{\bar A_0}{\tilde A_0}c_7=D_1c_7,\ \ c_6^*=c_9,
\end{eqnarray*}
where $D_1=|\bar A_0|/|\tilde A_0|,\ D_2=|\bar A_2|/|\tilde A_4|$.
Thus
\begin{eqnarray}\label{11}
M^*(h,\delta)&=&c_0^*-2D_1c_1h^{\frac{2}{3}}+2c_3h\ln h+c_3^*h-2D_2c_6h^{\frac{4}{3}}+2D_1c_7h^{\frac{5}{3}}\nonumber\\
&&+2c_9h^2\ln h+O(h^2).
\end{eqnarray}
Let $$c_{41}=\oint_{L_0}(p_x+q_y-\sigma_0-\sigma_1x-\sigma_3x^2)dt,$$$$\tilde c_{41}=\oint_{\tilde L_0}(p_x+q_y-\sigma_0-\sigma_1x-\sigma_3x^2)dt,$$
$$c_{31}^*=\oint_{ L^*}(p_x+q_y-\sigma_0-\sigma_1x-\sigma_3x^2)dt.$$
From the  proof of Theorem \ref{t1}, we will see that  $c_{41}+\tilde c_{41}=c_{31}^*$ if $\sigma_2=0$.
By using Theorem \ref{l1}, we will study the number of limit cycles near $L_0^*$ as follows.
\begin{theorem}\label{t2}
Suppose that  system \eqref{2} has a double homoclinic loop $L_0^*$ and the assumptions in Theorem \ref{t1} are satisfied. If there exist $\delta_0\in\mathbb{R}^m$, $6\leq l\leq9$ such that
$c_l(\delta_0)\neq0,\ \tilde c_0(\delta_0)=c_{41}(\delta_0)=\tilde c_{41}(\delta_0)=c_j(\delta_0)=0,\ \ j=0,1,2,3,5,\dots,l-1$, and
$${\rm rank}\frac{\partial (\tilde c_0,c_{41},\tilde c_{41},c_0,c_1,c_2,c_3,c_5,\cdots,c_{l-1})}{\partial (\delta_1,\delta_2,\delta_3,\delta_4,\delta_5,\delta_6,\delta_7,\delta_8,\cdots,\delta_m)}(\delta_0)=l+2,$$
then system \eqref{1} can have $2l-2\ (l=8,9)$ or $2l-1\ (l=6,7)$ limit cycles near $L_0^*$ for some $(\epsilon,\delta)$ near $(0,\delta_0)$.
\end{theorem}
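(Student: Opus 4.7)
The plan is to obtain the claimed limit cycles as zeros of the three Melnikov functions of Theorem~\ref{t1}, produced by a Chebyshev-type cascade perturbation starting from the distinguished parameter $\delta_0$.

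First, at $\delta_0$ the hypotheses annihilate the low-order coefficients of all three expansions. Indeed, $c_{41}(\delta_0)=\tilde c_{41}(\delta_0)=0$ together with $c_1(\delta_0)=c_2(\delta_0)=c_3(\delta_0)=0$ forces $c_4(\delta_0)=\tilde c_4(\delta_0)=0$ via the formulas of \eqref{10}; the sharing relations $c_0^*=c_0+\tilde c_0$, $c_1^*=-D_1c_1$, $c_2^*=c_3$ together with the identity $c_{41}+\tilde c_{41}=c_{31}^*$ (valid when $\sigma_2=0$) then yield $c_0^*(\delta_0)=c_1^*(\delta_0)=c_2^*(\delta_0)=c_3^*(\delta_0)=0$. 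Combined with the hypotheses $\tilde c_0(\delta_0)=0$ and $c_j(\delta_0)=0$ for $j\in\{0,1,2,3,5,\dots,l-1\}$, we conclude that the leading nonzero terms of $M$, $\tilde M$, and $M^*$ at $\delta_0$ are all controlled by the nonvanishing $c_l(\delta_0)$ (together with its associates $c_7,c_9$ in the expansion of $M^*$ when relevant).

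Second, the full-rank condition asserts that the $l+2$ functionals $\tilde c_0,c_{41},\tilde c_{41},c_0,c_1,c_2,c_3,c_5,\dots,c_{l-1}$ form a submersion of $\delta$ at $\delta_0$, so they may be prescribed independently by small perturbations of $\delta$. I would then carry out a classical Chebyshev cascade on the expansion \eqref{5.1}: perturb the functionals in the order of decreasing $h$-exponent, with magnitudes nested as $|\epsilon_{l-1}|\gg|\epsilon_{l-2}|\gg\cdots\gg|\epsilon_0|$ and with signs chosen so that each successive step introduces exactly one new simple sign change of $M$ in $(-\eta,0)$. Because the ordered basis $\{1,|h|^{2/3},|h|^{5/6},h\ln|h|,h,|h|^{7/6},\dots\}$ appearing in \eqref{5.1} is a Chebyshev-like system on such intervals (a property tacitly used in the order-one analogues treated in \cite{c1,c2}), this procedure produces $l$ simple zeros of $M$ in $(-\eta,0)$, hence $l$ limit cycles of system \eqref{1} encircling the sub-loop $L_0$.

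The technical heart of the proof is that $\tilde M$ and $M^*$ inherit most of their coefficients from those of $M$. By \eqref{5.2}, $\tilde M$ shares $c_1,c_3,c_6,c_7,c_9$ with the same sign and $c_2,c_5,c_8$ with the opposite sign, while $\tilde c_0$ and $\tilde c_{41}$ are the only independent handles; by \eqref{11}, every coefficient of $M^*$ is determined through $c_0+\tilde c_0$, $c_1$, $c_3$, $c_{41}+\tilde c_{41}$, and the higher-order $c_6,c_7,c_9$. After the $M$-cascade is fixed, one must therefore track the residual sign pattern of $\tilde M$ and $M^*$ and show that the two remaining handles $\tilde c_0,\tilde c_{41}$ suffice to produce the announced number of additional simple zeros. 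A case-by-case analysis separates $l\in\{6,7\}$ from $l\in\{8,9\}$: in the former, the $M$-cascade together with subsequent adjustment of $\tilde c_0,\tilde c_{41}$ yields $l-1$ further simple zeros of $\tilde M$ on $(-\eta,0)$, with no extra zeros of $M^*$ attainable because its lowest nonzero coefficient is pinned by $c_l$, giving $l+(l-1)=2l-1$ total cycles; in the latter the same procedure yields only $l-2$ further simple zeros of $\tilde M$, the loss of one zero coming from the sign-flip of $c_8$ when $l=8$ and from the fact that $c_l=c_9$ also controls $c_6^*$ (and hence is not freely sign-adjustable in both $\tilde M$ and $M^*$) when $l=9$, producing $l+(l-2)=2l-2$ total cycles. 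Carrying out this sign bookkeeping cleanly, case by case over $l\in\{6,7,8,9\}$, is the main obstacle.
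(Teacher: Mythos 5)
Your overall framework is the same as the paper's: use the rank condition to treat $\tilde c_0,c_{41},\tilde c_{41},c_0,c_1,c_2,c_3,c_5,\dots,c_{l-1}$ as independent small parameters, and run a nested-magnitude cascade on the Puiseux-type expansions of $M$, $\tilde M$, $M^*$ so that each coefficient, in order of increasing $h$-exponent dominance, creates one sign change. The preliminary observations (vanishing of $c_4,\tilde c_4$ and of $c_0^*,\dots,c_3^*$ at $\delta_0$, and the sharing relations between the three expansions) are also correct.

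However, your zero count is not achievable by the procedure you describe, and this is a genuine gap rather than mere bookkeeping. Take $l=9$. To get $l=9$ simple zeros of $M$ from the cascade you must make all ten effective signs in \eqref{5.1} alternate, i.e.\ the effective sign at position $i$ equals $(-1)^ie_0$. But then in \eqref{5.2} the positions $1,2,3$ of $\tilde M$ all carry the sign $-e_0$ (the flip at $c_2$ cancels the alternation), positions $5,6$ carry $e_0$, and positions $7,8,9$ carry $-e_0$; only positions $0$ and $4$ (the handles $\tilde c_0$, $\tilde c_{41}$) are free. That sequence admits at most $3$ sign changes, so $\tilde M$ contributes at most $3$ zeros, not the $l-2=7$ you claim; similarly $M^*$ in \eqref{11} contributes at most $2$. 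Your strategy therefore tops out at $9+3+2=14<16$, and the analogous computation for $l=6,7$ gives $6+2+2=10<11$ and $7+3+2=12<13$. The paper's proof avoids this by \emph{not} maximizing the zeros of $M$: it chooses the signs in a balanced way (e.g.\ $c_0,\tilde c_0\ll-c_1\ll-c_2\ll c_3\ll-c_{41}\ll\tilde c_{41}\ll-c_5\ll c_6\ll-c_7\ll\pm c_8\ll1$) so that for $l=9$ the three functions have $(6,6,4)$, $(6,5,5)$ or $(5,6,5)$ zeros, each totalling $16$. Your parenthetical claim that ``no extra zeros of $M^*$ are attainable'' is likewise false — $M^*$ supplies $4$ or $5$ of the $16$ cycles. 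So the distribution of zeros among $M$, $\tilde M$, $M^*$, not a greedy cascade on $M$ followed by adjustment of $\tilde c_0,\tilde c_{41}$, is the actual content of the proof, and your proposal as written would not reach the stated numbers.
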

\begin{proof}
In the following, we only prove the theorem when $l=9$. Then by similar argument, the theorem follows when $l=6,7,8$. For definiteness, we assume $c_9(\delta_0)<0$. By the assumptions, we can choose $\tilde c_0,c_{41},\tilde c_{41},c_0,c_1,c_2,c_3$, $c_5,c_6,c_7,c_{8}$ as free parameters vary near zero. Thus, when $\sigma_2=0$ the formulas of $M,\ \tilde M,\ M^*$ can be rewritten as
\begin{eqnarray*}
M(h,\delta)\!\!\!&=\!\!\!&c_0+c_1|h|^{\frac{2}{3}}+c_2|h|^{\frac{5}{6}}+c_3h\ln|h|+(c_{41}+O(c_1)+O(c_2)+O(c_3))h\\
\!\!\!&\!\!\!&+c_5|h|^{\frac{7}{6}}+c_6|h|^{\frac{4}{3}}+c_7|h|^{\frac{5}{3}}+c_8|h|^{\frac{11}{6}}+\tilde c_9h^2\ln|h|+O(h^2)\\
\!\!\!&\!\!\!&:=f_1(h,\tilde c_0,c_{41},\tilde c_{41},c_0,c_1,c_2,c_3,c_5,c_6,c_7,c_{8}),\ \ \ \  {\rm for}\  0<-h\ll1,\\
\tilde M(h,\delta)\!\!\!&=\!\!\!&\tilde c_0+c_1|h|^{\frac{2}{3}}-c_2|h|^{\frac{5}{6}}+c_3h\ln|h|+ (\tilde c_{41}+O(c_1)+O(c_2)+O(c_3))h\\
\!\!\!&\!\!\!&-c_5|h|^{\frac{7}{6}}+c_6|h|^{\frac{4}{3}}+c_7|h|^{\frac{5}{3}}-c_8|h|^{\frac{11}{6}}+\tilde c_9h^2\ln|h|+O(h^2)\\
\!\!\!&\!\!\!&:=f_2(h,\tilde c_0,c_{41},\tilde c_{41},c_0,c_1,c_2,c_3,c_5,c_6,c_7,c_{8}),\ \ \ \ {\rm for}\  0<-h\ll1,\\
M^*(h,\delta)\!\!\!&=\!\!\!&c_0+\tilde c_0-2D_1c_1h^{\frac{2}{3}}+2c_3h\ln h+(c_{41}+\tilde c_{41}+O(c_1)+O(c_2)+O(c_3))h\\
\!\!\!&\!\!\!&-2D_2c_6h^{\frac{4}{3}}+2D_1c_7h^{\frac{5}{3}}+2\tilde c_9h^2\ln h+O(h^2)\\
\!\!\!&\!\!\!&:=f_3(h,\tilde c_0,c_{41},\tilde c_{41},c_0,c_1,c_2,c_3,c_5,c_6,c_7,c_{8}),\ \ \ \ {\rm for}\  0<h\ll1
\end{eqnarray*}
where $\tilde c_9=c_9(\delta_0)+O(|\tilde c_0,c_{41},\tilde c_{41},c_0,c_1,c_2,c_3,c_5,c_6,c_7,c_{8}|)<0$. It is easy to verify that
\begin{enumerate}
\item[(1)]if $c_0,\tilde c_0\ll-c_1\ll-c_2\ll c_3\ll-c_{41}\ll\tilde c_{41}\ll-c_5\ll c_6\ll-c_7\ll-c_8\ ({\rm or}\ c_8)\ll1$
or $c_0,\tilde c_0\ll-c_1\ll-c_2\ll c_3\ll-\tilde c_{41}\ll c_{41}\ll c_5\ll c_6\ll-c_7\ll-c_8\ ({\rm or}\ c_8)\ll1$, $f_1$, $f_2$ and $f_3$ have 6, 6 and 4 zeros respectively,
\item[(2)]if $c_0\ll-\tilde c_0\ll-c_1\ll-c_2\ll c_3\ll-c_{41}\ll\tilde c_{41}\ll-c_5\ll c_6\ll-c_7\ll-c_8\ ({\rm or}\ c_8)\ll1$
or $c_0\ll-\tilde c_0\ll-c_1\ll-c_2\ll c_3\ll-\tilde c_{41}\ll c_{41}\ll c_5\ll c_6\ll-c_7\ll-c_8\ ({\rm or}\ c_8)\ll1$, $f_1$, $f_2$ and $f_3$ have 6, 5 and 5 zeros respectively,
\item[(3)]if $\tilde c_0 \ll-c_0\ll-c_1\ll-c_2\ll c_3\ll-c_{41}\ll\tilde c_{41}\ll-c_5\ll c_6\ll-c_7\ll-c_8\ ({\rm or}\ c_8)\ll1$
or $\tilde c_0 \ll-c_0\ll-c_1\ll-c_2\ll c_3\ll-\tilde c_{41}\ll c_{41}\ll c_5\ll c_6\ll-c_7\ll-c_8\ ({\rm or}\ c_8)\ll1$, $f_1$, $f_2$ and $f_3$ have 5, 6 and 5 zeros respectively.
\end{enumerate}
In each case, there are 16 limit cycles for system \eqref{1} near $L_0^*$. This ends the proof.
\end{proof}
\section{Application}
In this section, we consider a Li\'{e}nard system of the form
\begin{equation}\label{e1}
\dot{x}=y,\ \
\dot{y}=-8x^5\left(x-\frac{\sqrt 3}{2}\right)\left(x+\frac{\sqrt 3}{2}\right)-\epsilon \left(\sum\limits_{j=0}^{12}a_jx^j\right)y.
\end{equation}
System $\eqref{e1}|_{\epsilon=0}$ is Hamiltonian with
\begin{equation*}
H(x,y)=\frac{1}{2}y^2-x^6+x^8
\end{equation*}
and the phase portrait is shown in Figure 3. It is easy to verify that for the unperturbed system,  $O(0,0)$ is a nilpotent saddle of order 2 ($k=6,\ h_6=-1$ in \eqref{13}) and a double homoclinic loop $L^*_0=L_0\cup \tilde L_0$ defined by $H(x,y)=0$ passing through it, where $L_0=L_0^*|_{x\geq0},\ \tilde L_0=L_0^*|_{x\leq0}$. Also, there is a center $C_1({\sqrt3}/{2},0)$ ($C_2(-{\sqrt3}/2,0)$, resp.)  inside $L_0$ ($\tilde L_0$, resp.).
Then we have the following result.
\begin{figure}[t]
\centering
{\includegraphics[width=7cm]{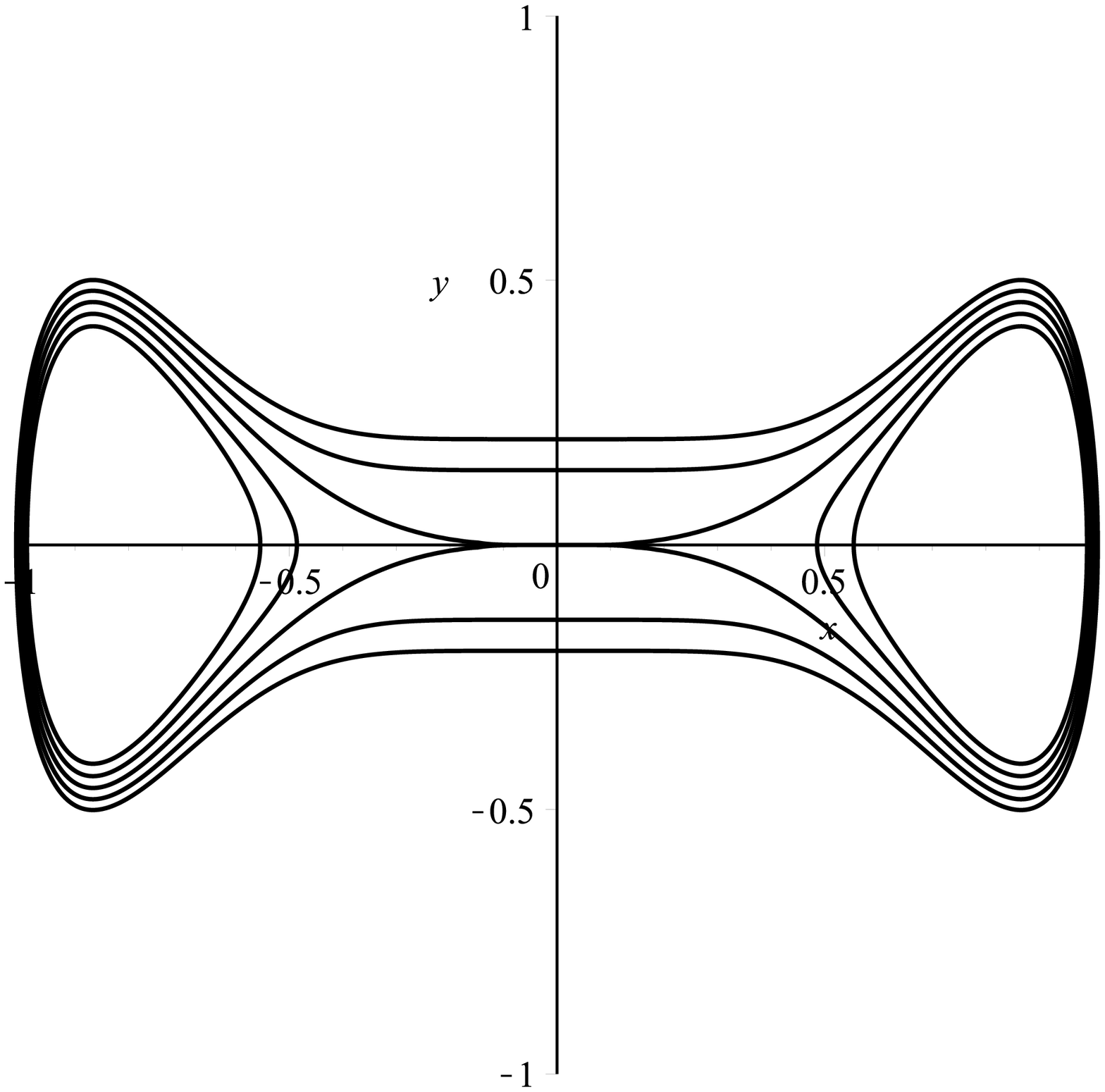}}\\
{Figure 3.\ Phase portrait of system $\eqref{e1}|_{\epsilon=0}$}
\end{figure}
\begin{theorem}
Consider  system \eqref{e1}. Then
\begin{enumerate}
\item[(1)]If $a_{12}\neq0$,  the system \eqref{e1} can have 16 limit cycles near the loop $L_0^*$ for some $(\epsilon,a_0,a_1,\cdots,a_{12})$.
\item[(2)]If $a_{12}=0,\ a_{11}\neq0$,  the system \eqref{e1} can have 14 limit cycles near the loop $L_0^*$ for some $(\epsilon, a_0,a_1,\cdots,a_{11})$.
\item[(3)]If $a_{12}=a_{11}=0,\ a_{10}\neq0$,  the system \eqref{e1} can have 13 limit cycles near the loop $L_0^*$ for some $(\epsilon, a_0,a_1,\cdots,a_{10})$.
\end{enumerate}
\end{theorem}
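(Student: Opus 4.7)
The plan is to apply Theorem \ref{t2} with $l = 9, 8, 7$ in cases (1), (2), (3) respectively, yielding $2l-2 = 16, 14$ and $2l-1 = 13$ limit cycles. First I would translate the data of system \eqref{e1} into the Lemma \ref{l1} notation: since $p \equiv 0$ and $q = -(\sum_{j=0}^{12} a_j x^j)y$, we have $a_{ij} \equiv 0$ and $b_{i,1} = -a_i$ (all other $b_{ij} = 0$). Hence $\sigma_0 = -a_0$, $\sigma_1 = -a_1$, $\sigma_2 = a_{11}+2b_{02} = 0$, $\sigma_3 = -a_2$; in particular the condition $\sigma_2 = 0$ required for the formulas of $c_4, \tilde c_4, c_3^*$ in Theorem \ref{t1} is automatic. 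For the unperturbed Hamiltonian $H = \tfrac{1}{2}y^2 - x^6 + x^8$ one has $h_6 = -1$, $h_8 = 1$ and all other $h_{ij} = 0$, which collapses the Appendix formulas for $\tilde r_{kl}$ and $r^{(1)}_{kl}$ to explicit linear forms in $a_0, \ldots, a_{12}$.

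The key structural reduction is the $x \mapsto -x$ symmetry of $H$. Under this involution $L_0$ and $\tilde L_0$ are interchanged (with a reversal of orientation), so splitting $f(x) := \sum a_j x^j = f_e(x) + f_o(x)$ into even and odd parts and performing the substitution inside $\oint q\, dx - p\, dy$ shows that $M(h,\delta) + \tilde M(h,\delta)$ depends only on the seven even parameters $(a_0, a_2, \ldots, a_{12})$, while $M(h,\delta) - \tilde M(h,\delta)$ depends only on the six odd parameters $(a_1, a_3, \ldots, a_{11})$. Matching this against the expansions \eqref{5.1}--\eqref{5.2} yields two decoupled blocks: an ``even block'' $(c_0 + \tilde c_0,\, c_1,\, c_3,\, c_{41}+\tilde c_{41},\, c_6,\, c_7,\, c_9)$ linear in the seven even $a_j$, and an ``odd block'' $(c_0 - \tilde c_0,\, c_2,\, c_{41}-\tilde c_{41},\, c_5,\, c_8)$ linear in the six odd $a_j$. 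The rank hypothesis of Theorem \ref{t2} therefore splits into independent rank conditions on a $7\times 7$ even system and a $5\times 6$ odd system in case (1), and analogous smaller systems in cases (2) and (3).

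To verify full rank in each block I would compute the forms explicitly. On $L_0$ one has $y = \pm\sqrt{2}\, x^3 \sqrt{1-x^2}$ for $x \in [0,1]$, so the $c_0$-, $\tilde c_0$-, $c_{41}$-, $\tilde c_{41}$-integrals reduce to sums $\sum a_j \int_0^1 x^{j+3}(1-x^2)^{\alpha}\, dx$ with $\alpha \in \{1/2, 3/2\}$, which are strictly positive beta integrals. The remaining coefficients $c_1, c_2, c_3, c_5, c_6, c_7, c_8, c_9$ are read off from \eqref{19} and the Appendix formulas specialized to $h_6 = -1, h_8 = 1$. One expects a triangular pattern: $a_{12}$ enters $c_9$ through $\tilde r_{80}$, $a_{10}$ enters $c_7$ through $\tilde r_{60}$, and $a_{11}$ enters $c_8$ through $\tilde r_{70}$, with each lower parameter controlling a lower-order coefficient. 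Non-vanishing of each diagonal entry (an explicit positive beta integral) then gives full row rank in both blocks, whereupon Theorem \ref{t2} applies and the three cases follow.

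The main obstacle is the bookkeeping in the third step: the Appendix expressions for $\tilde r_{60}, \tilde r_{70}, \tilde r_{80}, \tilde r_{01}, \tilde r_{11}, \tilde r_{21}, r^{(1)}_{31}, r^{(1)}_{41}$ are long polynomials in the $h_{ij}$ and the $\sigma_i$, and they must be specialized to our sparse $H$ and specialized $p, q$ before the triangular dependence on the $a_j$ becomes visible. Once this specialization is carried out, each diagonal entry is an explicit nonzero rational multiple of a beta integral, positivity is immediate, and the rank hypothesis of Theorem \ref{t2} is verified in all three cases.
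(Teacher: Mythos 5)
Your overall strategy coincides with the paper's: specialize the coefficient formulas of Theorem \ref{t1} to system \eqref{e1}, verify the hypotheses of Theorem \ref{t2} with $l=9,8,7$, and conclude. The even/odd decoupling you describe is also genuinely present in the computed formulas (though you have the parity assignment backwards relative to the paper's conventions: in the explicit expressions, $c_0+\tilde c_0$ and $c_{41}+\tilde c_{41}$ retain only the odd-indexed $a_j$, while the differences retain the even-indexed ones; the local coefficients split as $c_1,c_3,c_6,c_7,c_9$ even and $c_2,c_5,c_8$ odd).

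The genuine gap is the claimed triangular structure. It is not true that ``$a_{12}$ enters $c_9$ through $\tilde r_{80}$, $a_{10}$ enters $c_7$ through $\tilde r_{60}$, $a_{11}$ enters $c_8$ through $\tilde r_{70}$.'' The coefficients $c_1,\dots,c_9$ come from the local expansion at the nilpotent saddle and, after specializing to $h_6=-1$, $h_8=1$, all other $h_{ij}=0$, they involve only $a_0,\dots,a_8$: the paper computes $c_7\propto \frac{1729}{648}a_0+\frac{91}{36}a_2+\frac{7}{3}a_4+2a_6$, $c_8\propto \frac{140}{81}a_1+\frac{14}{9}a_3+\frac{4}{3}a_5+a_7$, and $c_9\propto \frac{315}{64}a_0+\frac{35}{8}a_2+\frac{15}{4}a_4+3a_6+2a_8$. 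The top parameters $a_{10},a_{11},a_{12}$ enter \emph{only} through the four global integrals $c_0,\tilde c_0,c_{41},\tilde c_{41}$, and these four forms are dense (non-triangular) combinations involving $\pi$ and rational beta values. Consequently the decisive step --- that $c_l\neq 0$ on the solution set of the eleven (resp.\ ten, nine) vanishing conditions --- is not ``immediate from a positive diagonal entry.'' One must actually solve the linear system: in case (1) the constraints force $a_8$ to be a specific rational multiple of $a_{12}$ (the paper's constant $\frac{21702051851422978291}{27670116110564327424}$), and only then does $c_9=-\frac{\sqrt2}{24}a_8\neq0$ follow; a priori the kernel of the eleven forms could have been contained in $\{c_9=0\}$, which would destroy the argument. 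The same issue arises in cases (2) and (3), where $c_8$ and $c_7$ become nonzero only via the induced relations $a_9=-\frac{61}{32}a_{11}$ and $a_6=\frac{8}{7}a_{10}$ respectively. Likewise the rank conditions must be checked on the actual (non-triangular) $4\times 6$ block of global forms, not read off from a diagonal. So your proposal identifies the right framework but omits the computation that constitutes the substance of the proof.
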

\begin{proof}
Let $f(x,\delta)=\sum\limits_{j=0}^{12}a_jx^j$. By Theorem \ref{t1}, we obtain
$$c_0(\delta)=M(0,\delta)=-\oint_{L_0}f(x,\delta)ydx=-\sum\limits_{j=0}^{12}a_jI_{1j},$$
$$\tilde c_0(\delta)=\tilde M(0,\delta)=-\oint_{\tilde L_0}f(x,\delta)ydx=-\sum\limits_{j=0}^{12}a_jI_{2j},$$
\begin{eqnarray*}
c_{41}(\delta)\!\!\!&=\!\!\!&\oint_{L_0}(p_x+q_y-\sigma_0-\sigma_1x-\sigma_3x^2)dt=-\oint_{L_0}(f(x,\delta)-a_0-a_1x-a_2x^2)dt\\
\!\!\!&=\!\!\!&-\oint_{L_0}\frac{(f(x,\delta)-a_0-a_1x-a_2x^2)}{y}dx=-2\int_{0}^{1}\frac{(f(x,\delta)-a_0-a_1x-a_2x^2)}{y}dx\\
\!\!\!&=\!\!\!&-2\int_{0}^{1}\frac{1}{y}\left(\sum\limits_{j=3}^{12}a_jx^j\right)dx=-\sqrt{2}\int_{0}^{1}\sum\limits_{j=3}^{12}a_j\frac{x^{j-3}}{\sqrt{1-x^2}}dx\\
\!\!\!&=\!\!\!&-\sqrt{2}\sum\limits_{j=3}^{12}a_j\int_{0}^{1}f_j(x)dx,
\end{eqnarray*}
\begin{eqnarray*}
\tilde c_{41}(\delta)\!\!\!&=\!\!\!&\oint_{\tilde L_0}(p_x+q_y-\sigma_0-\sigma_1x-\sigma_3x^2)dt=-\oint_{\tilde L_0}(f(x,\delta)-a_0-a_1x-a_2x^2)dt\\
\!\!\!&=\!\!\!&-\oint_{\tilde L_0}\frac{(f(x,\delta)-a_0-a_1x-a_2x^2)}{y}dx=-2\int_{-1}^{0}\frac{(f(x,\delta)-a_0-a_1x-a_2x^2)}{y}dx\\
\!\!\!&=\!\!\!&-2\int_{-1}^{0}\frac{1}{y}\left(\sum\limits_{j=3}^{12}a_jx^j\right)dx=-\sqrt{2}\int_{-1}^{0}\sum\limits_{j=3}^{12}a_j\frac{x^{j-3}}{\sqrt{1-x^2}}dx\\
\!\!\!&=\!\!\!&-\sqrt{2}\sum\limits_{j=3}^{12}a_j\int_{-1}^{0}f_j(x)dx
\end{eqnarray*}
where
$$I_{1j}=\oint_{L_0}x^jydx=2\int_{0}^{1}x^jydx=2\sqrt{2}\int_{0}^{1}x^{j+3}\sqrt{1-x^2}dx, \ j=0,1,\ldots,12,$$
$$I_{2j}=\oint_{L_3}x^jydx=2\int_{-1}^{0}x^jydx=2\sqrt{2}\int_{-1}^{0}x^{j+3}\sqrt{1-x^2}dx, \ j=0,1,,\ldots,12,$$
$$f_j(x)=\frac{x^{j-3}}{\sqrt{1-x^2}},\ \ j=3,\ldots,12.$$
Computing them by Maple 17, we obtain
\begin{eqnarray*}
c_0(\delta)\!\!\!&=\!\!\!&-2\sqrt{2}\left[\frac{2048}{109395}a_{12}+{\frac {429}{65536}}\,a_{{11}}\pi +{
\frac {1024}{45045}}\,a_{{10}}+{\frac {33}{4096}}\,a_{{9}}\pi +{\frac
{256}{9009}}\,a_{{8}}+\frac{2}{15}a_0\right.\\
\!\!\!&\!\!\!& \left.+{\frac {21}{2048}}\,a_{{7}}\pi+{\frac {128}{3465}}\,a_{{6}}+{\frac {7}{512}}\,a_{{5}}\pi +{\frac {16}{315}}\,a_{{4
}}+{\frac {5}{256}}\,a_{{3}}\pi+\frac{8}{105}a_2+{\frac {1}{32}}\,a_{{1}}\pi\right],\\
\tilde c_0(\delta)\!\!\!&=\!\!\!&-2\sqrt{2}\left[-\frac{2048}{109395}a_{12}+{\frac {429}{65536}}\,a_{{11}}
\pi -{\frac {1024}{45045}}\,a_{{10}}+{\frac {33}{4096}}\,a_{{9}}\pi -{
\frac {256}{9009}}\,a_{{8}}-\frac{2}{15}a_0\right.\\
\!\!\!&\!\!\!& \left.+{\frac {21}{2048}}\,a_{{7}}\pi-{\frac {128}{3465}}\,a_{{6}}+{\frac {7}{512}}\,a_{{5}}\pi -{\frac {16}{315}}\,
a_{{4}}+{\frac {5}{256}}\,a_{{3}}\pi-\frac{8}{105}a_2+{\frac {1}{32}}\,a_{{1}}\pi\right],\\
c_{41}(\delta)\!\!\!&=\!\!\!&-\sqrt{2}\left[{\frac{128}{315}a_{12}+\frac {35}{256}}\,a_{{11}}\pi +{\frac {16}{35}}
\,a_{{10}}+{\frac {5}{32}}\,a_{{9}}\pi +{\frac {8}{15}}\,a_{{8}}+\frac{3}{16}
\,a_{{7}}\pi +\frac{2}{3}\,a_{{6}}\right.\\
\!\!\!&\!\!\!&\left.+\frac{1}{4}\,a_{{5}}\pi +a_{{4}}+\frac{1}{2}\,a_{{3}}\pi\right],\\
\tilde c_{41}(\delta)\!\!\!&=\!\!\!&-\sqrt{2}\left[{-\frac{128}{315}a_{12}+\frac {35}{256}}\,a_{{11}}\pi -{\frac {16}{35}}
\,a_{{10}}+{\frac {5}{32}}\,a_{{9}}\pi -{\frac {8}{15}}\,a_{{8}}+\frac{3}{16}
\,a_{{7}}\pi -\frac{2}{3}\,a_{{6}}\right.\\
\!\!\!&\!\!\!&\left.+\frac{1}{4}\,a_{{5}}\pi -a_{{4}}+\frac{1}{2}\,a_{{3}}\pi\right].
\end{eqnarray*}
Then by Theorem \ref{t1} and the Appendix, it follows that
\begin{eqnarray*}
c_1(\delta)\!\!\!&=\!\!\!&-2\sqrt{2}\tilde A_0a_0,\ \ c_2(\delta)=-2\sqrt{2}\tilde A_1a_1,\ \ c_3(\delta)=\frac{\sqrt{2}}{12}(a_0+2a_2),\\
c_5(\delta)\!\!\!&=\!\!\!&-\frac{2\sqrt{2}}{3}\tilde A_3(2a_1+3a_3),\ \ c_6(\delta)=-\sqrt{2}\tilde A_4(\frac{55}{36}a_0+\frac{5}{3}a_2+2a_4),\\
 c_7(\delta)\!\!\!&=\!\!\!&-\frac{\sqrt{2}}{10}\tilde A_0(\frac{1729}{648}a_0+\frac{91}{36}a_2+\frac{7}{3}a_4+2a_6),\\
 c_8(\delta)\!\!\!&=\!\!\!&-\frac{4\sqrt{2}}{11}\tilde A_1(\frac{140}{81}a_1+\frac{14}{9}a_3+\frac{4}{3}a_5+a_7),\\
 c_9(\delta)\!\!\!&=\!\!\!&-\frac{\sqrt{2}}{48}(\frac{315}{64}a_0+\frac{35}{8}a_2+\frac{15}{4}a_4+3a_6+2a_8).
\end{eqnarray*}
(1) Suppose that $a_{12}\neq0$. By $\tilde c_0(\delta)= c_{41}(\delta)=\tilde c_{41}(\delta)=c_0(\delta)=c_1(\delta)=c_2(\delta)=c_3(\delta)=c_5(\delta)=c_6(\delta)=c_7(\delta)=c_8(\delta)=0$, we can obtain $$a_i=0,\ i=0,1,2,3,4,6,9,11, \ \ a_5=-\frac{3}{4}a_7,$$
$$ a_8=\frac{21702051851422978291}{27670116110564327424}a_{12},\ \ a_{10}=-\frac{99829438516545753655}{55340232221128654848}a_{12},$$
and  $c_9=-\frac{\sqrt{2}a_8}{24}=-\frac{\sqrt{2}}{24}*\frac{21702051851422978291}{27670116110564327424}a_{12}\neq0.$
Furthermore, $${\rm rank}\frac{\partial (\tilde c_0,c_{41},\tilde c_{41},c_0,c_1,c_2,c_3,c_5,c_6,c_7,c_8)}{\partial (a_0,a_1,a_2,a_3,a_4,a_5,a_6,a_7,a_8,a_9,a_{10},a_{11},a_{12})}=11.$$
Thus, by Theorem \ref{t2}, we obtain 16 limit cycles for system \eqref{e1} near $L_0^*$ for some $(\epsilon, a_0,a_1,a_2,a_3,a_4,a_5,a_6,a_7,a_8,a_9,a_{10},a_{11},a_{12})$ near $$(0,0,0,0,0,0,-\frac{3}{4}a_7,0, a_7,\frac{21702051851422978291}{27670116110564327424}a_{12},0,-\frac{99829438516545753655}{55340232221128654848}a_{12},0,a_{12}).$$
(2) Assume that $a_{12}=0,\ a_{11}\neq0$. The equations $\tilde c_0(\delta)= c_{41}(\delta)=\tilde c_{41}(\delta)=c_0(\delta)=c_1(\delta)=c_2(\delta)=c_3(\delta)=c_5(\delta)=c_6(\delta)=c_7(\delta)=0$ have solution $$a_i=0, \ i=0,1,2,3,4,6,8,10,\ \ a_5=\frac{165}{256}a_{11}-\frac{3}{4}a_7,\ a_9=-\frac{61}{32}a_{11}.$$ And $c_8=-\frac{55\sqrt{2}}{176}\tilde A_1a_{11}\neq0$, $${\rm rank}\frac{\partial (\tilde c_0,c_{41},\tilde c_{41},c_0,c_1,c_2,c_3,c_5,c_6,c_7)}{\partial (a_0,a_1,a_2,a_3,a_4,a_5,a_6,a_7,a_8,a_9,a_{10},a_{11})}=10.$$ This implies that there can have   14 limit cycles for system \eqref{e1} near $L_0^*$ for some $(\epsilon, a_0,a_1,a_2,a_3,a_4,a_5,a_6,a_7,a_8,a_9,a_{10},a_{11})$ near $$(0,0,0,0,0,0,\frac{165}{256}a_{11}-\frac{3}{4}a_7,0,a_7,0,-\frac{61}{32}a_{11},0,a_{11}).$$
(3) Let $a_{12}=a_{11}=0,\ a_{10}\neq0$. By equations $\tilde c_0(\delta)= c_{41}(\delta)=\tilde c_{41}(\delta)=c_0(\delta)=c_1(\delta)=c_2(\delta)=c_3(\delta)=c_5(\delta)=c_6(\delta)=0$, we have
\begin{equation*}\label{18}
a_0=a_1=a_2=a_3=a_4=a_9=0,\ a_5=-\frac{3}{4}a_7,\ a_6=\frac{8}{7}a_{10},\ a_8=-\frac{16}{7}a_{10},
\end{equation*} and $c_7=-\frac{8\sqrt{2}}{35}\tilde A_0a_{10}\neq0.$
Furthermore, we can verify that $${\rm rank}\frac{\partial (\tilde c_0,c_{41},\tilde c_{41},c_0,c_1,c_2,c_3,c_5,c_6)}{\partial (a_0,a_1,a_2,a_3,a_4,a_5,a_6,a_7,a_8,a_9,a_{10})}=9.$$ Thus, by Theorem \ref{t1} we can prove that system \eqref{e1} has 13 limit cycles near $L_0^*$ for some $(\epsilon,a_0,a_1,a_2,a_3,a_4,a_5,a_6,a_7,a_8,a_9,a_{10})$ near $$(0,0,0,0,0,0,-\frac{3}{4}a_7,\frac{8}{7}a_{10},a_7,-\frac{16}{7}a_{10},0,a_{10}).$$
The proof is completed.
\end{proof}

\section{Appendix}
In this appendix, we give the expressions of $\tilde r_{ij}$ and $r_{ij}^{(1)},\ i,j\geq0$ by running the calculating programs in \cite{c15} through using Maple-17.  Formulas of $\bar a_{i,1},\ \bar b_{i,0},\ \alpha_{i,0},\ i=0,1,\dots,5$, $h_j,\ j=3,4,\dots,9$ and $\bar a_{0,2},\ \bar a_{1,2},\ \bar a_{0,3},\ \bar a_{1,3},\ \bar b_{0,1},\ \bar b_{1,1},\ \bar b_{0,2},\ \bar b_{1,2},\ \alpha_{0,1},\ \alpha_{1,1}$  are the same as these in \cite{c2}.
\begin{eqnarray*}
\tilde r_{00}&=&\alpha_{{0,0}}{\bar n}_{{0}}\\
\tilde r_{10}&=&\alpha_{{0,0}}{\bar{\mu}}_{{1}}{\bar n}_{{1}}+\alpha_{{1,0}}{\bar{\mu}}_{{1}}{\bar n}_{{0}}\\
\tilde r_{20}&=&\alpha_{{0,0}}{{\bar{\mu}}_{{1}}}^{2}{\bar n}_{{2}}+\alpha_{{1,0}}{{\bar{\mu}}_{{1}}}^{2}{\bar n}_{{1}}+\alpha_{{2,0}}{{\bar{\mu}}_{{1}}}^{2}{\bar n}_{{0}}+\alpha_{{0,0}}{\bar{\mu}}_{{2}}{\bar n}_{{1}}+\alpha_{{1,0}}{\bar{\mu}}_{{2}}{\bar n}_{{0}}\\
\tilde r_{30}&=&({{\bar{\mu}}_{{1}}}^{3}{\bar n}_{{3}}+2\,{\bar{\mu}}_{{1}}{\bar{\mu}}_{{2}}{\bar n}_{{2}}+{\bar{\mu}}_{{3}}{\bar n}_{{1}})\alpha_{{0,0}}
+({{\bar{\mu}}_{{1}}}^{3}{\bar n}_{{2}}+2\,{\bar{\mu}}_{{1}}{\bar{\mu}}_{{2}}{\bar n}_{{1}}+{\bar{\mu}}_{{3}}{\bar n}_{{0}})\alpha_{{1,0}}\\
&&+({{\bar{\mu}}_{{1}}
}^{3}{\bar n}_{{1}}+2\,{\bar{\mu}}_{{1}}{\bar{\mu}}_{{2}}{\bar n}_{{0}})\alpha_{{2,0}}
+{{\bar{\mu}}_{{1}}}^{3}{\bar n}_{{0}}\alpha_{{3,0}}\\
\tilde r_{i0}&=&\sum\limits_{l=0}^{(i)}m_{i0}^{(l)}\alpha_{l,0},\ \ \ \ i=4,6,7,8\\
\tilde r_{01}&=&{\bar n}_{{0}}\alpha_{{0,1}}\\
\tilde r_{11}&=&{\bar{\mu}}_{{1}}{\bar n}_{{1}}\alpha_{{0,1}}+{\bar{\mu}}_{{1}}{\bar n}_{{0}}\alpha_{{1,1}}\\
\tilde r_{21}&=&({{\bar{\mu}}_{{1}}}^{2}{\bar n}_{{2}}+{\bar{\mu}}_{{2}}{\bar n}_{{1}})\alpha_{{0,1}}+({{\bar{\mu}}_{{1}}}^{2}{\bar n}_{{1}}+{\bar{\mu}}_{{2}}{\bar n}_{{0}})\alpha_{{1,1}}+{{\bar{\mu}}_{{1}}}^{2}{\bar n}_{{0}}\alpha_{{2,1}}
\end{eqnarray*}
and
\begin{eqnarray*}
r_{01}^{(1)}&=&2\,{\bar n}_{{0}}\gamma_{{0,1}},\\
r_{11}^{(1)}&=&(2\,{{\bar{\mu}}_{{1}}}
^{2}{\bar n}_{{2}}+2\,{\bar{\mu}}_{{2}}{\bar n}_{{1}})\gamma_{{0,1}}+(2\,{{\bar{\mu}}_{{1}}}^{2}{\bar n}_{{1}}+2\,{\bar{\mu}}_{{2}}{\bar n}_{{0}
})\gamma_{{1,1}}+2\,{{\bar{\mu}}_{{1}}}^{2}{\bar n}_{{0}}\gamma_{{2,1}},\ \ \ \ \\
r_{i1}^{(1)}&=&\sum\limits_{l=0}^{2i}\tilde m_{i1}^{(l)}\gamma_{l,1},\ \ \ \ i=2,3,4,\\
r_{03}^{(1)}&=&2\,{\bar n}_{{0}}\gamma_{{0,3}}\\
r_{13}^{(1)}&=&(2\,{{\bar{\mu}}_{{1}}}
^{2}{\bar n}_{{2}}+2\,{\bar{\mu}}_{{2}}{\bar n}_{{1}})\gamma_{{0,3}}+(2\,{{\bar{\mu}}_{{1}}}^{2}{\bar n}_{{1}}+2\,{\bar{\mu}}_{{2}}{\bar n}_{{0}})\gamma_{{1,3}}+2\,{{\bar{\mu}}_{{1}}}^{2}{\bar n}_{{0}}\gamma_{{2,3}}
\end{eqnarray*}
where
\begin{eqnarray*}
m_{40}^{(0)}&=&{{\bar{\mu}}_{{1}}}^{4}{\bar n}_{{4}}+{\bar{\mu}}_{{4}}{\bar n}_{{1}}+2\,{\bar{\mu}}_{{1}}{\bar{\mu}}_{{3}}{\bar n}_{{2}}+3\,{{\bar{\mu}}_{{1}}}^{2}{\bar{\mu}}_{{2}}{\bar n}_{{3}}+{{\bar{\mu}}_{{2}}}^{2}{\bar n}_{{2}}\\
m_{40}^{(1)}&=&{{\bar{\mu}}_{{1}}}^{4}{\bar n}_{{3}}+3\,{{\bar{\mu}}_{{1}}}^{2}{\bar{\mu}}_{{2}}{\bar n}_{{2}}+2\,{\bar{\mu}}_{{1}}{\bar{\mu}}_{{3}}{\bar n}_{{1}}
+{{\bar{\mu}}_{{2}}}^{2}{\bar n}_{{1}}+{\bar{\mu}}_{{4}}{\bar n}_{{0}}\\
m_{40}^{(2)}&=&{{\bar{\mu}}_{{1}}}^{4}{\bar n}_{{2}}+3\,{{\bar{\mu}}_{{1}}}^{2}{\bar{\mu}}_{{2}}{\bar n}_{{1}}+2\,{\bar{\mu}}_{{1}}{\bar{\mu}}_{{3}}{\bar n}_{{0}}+{{\bar{\mu}}_{{2}}}^{2}{\bar n}_{{0}}\\
m_{40}^{(3)}&=&{{\bar{\mu}}_{{1}}}^{4}{\bar n}_{{1}}+3\,{{\bar{\mu}}_{{1}}}^{2}{\bar{\mu}}_{{2}}{\bar n}_{{0}}\\
m_{40}^{(4)}&=&{{\bar{\mu}}_{{1}}}^{4}{\bar n}_{{0}}\\
m_{60}^{(0)}&=&4\,{{\bar{\mu}}_{{1}}}^{3}{\bar{\mu}}_{{3}}{\bar n}_{{4}}
+6\,{{\bar{\mu}}_{{1}}}^{2}{{\bar{\mu}}_{{2}}}^{2}{\bar n}_{{4}}
+2\,{\bar{\mu}}_{{2}}{\bar{\mu}}_{{4}}{\bar n}_{{2}}+5\,{{\bar{\mu}}_{{1
}}}^{4}{\bar{\mu}}_{{2}}{\bar n}_{{5}}+2\,{\bar{\mu}}_{{1}}{\bar{\mu}}_{{5}}{\bar n}_{{2}}+3\,{{\bar{\mu}}_{{1}}}^{2}{\bar{\mu}}_{{4}}{\bar n}_{{3}}+{{\bar{\mu}}_{{2}}}^{3}{\bar n}_{{3}}\\
&&+{{\bar{\mu}}_{{3}}}^{2}{\bar n}_{{2}}
+{\bar{\mu}}_{{6}}{\bar n}_{{1}}+{{\bar{\mu}}_{{1}}}^{6}{\bar n}_{{6}}+6\,
{\bar{\mu}}_{{1}}{\bar{\mu}}_{{2}}{\bar{\mu}}_{{3}}{\bar n}_{{3}}\\
m_{60}^{(1)}&=&2\,{\bar{\mu}}_{{2}}{\bar{\mu}}_{{4}}{\bar n}_{{1}}+2\,{\bar{\mu}}_{{1}}{\bar{\mu}}_{{5}}{\bar n}_{{1}}+5\,{{\bar{\mu}}_{{1}}}^{4}{\bar{\mu}}_{{2}}{\bar n}_{{4}}
+4\,{{\bar{\mu}}_{{1}}}^{3}{\bar{\mu}}_{{3}}{\bar n}_{{3}}+6\,{{\bar{\mu}}_{{1}}}^{2}{{\bar{\mu}}_{{2}}}^{2}{\bar n}_{{3}}+3\,{{\bar{\mu}}_{{1}}}^{2}{\bar{\mu}}_{{4}}{\bar n}_{{2}}+{{\bar{\mu}}_{{1}}}^{6}{\bar n}_{{5}}\\
&&+{{\bar{\mu}}_{{2}}}^{3}{\bar n}_{{2}}+{{\bar{\mu}}_{{3}}}^{2}{\bar n}_{{1}}+{\bar{\mu}}_{{6}}{\bar n}_{{0}}+6\,{\bar{\mu}}_{{1}}{\bar{\mu}}_{{2}}{\bar{\mu}}_{{3}}{\bar n}_{{2}}\\
m_{60}^{(2)}&=&5\,{{\bar{\mu}}_{{1}}}^{4}{\bar{\mu}}_{{2}}{\bar n}_{{
3}}+2\,{\bar{\mu}}_{{2}}{\bar{\mu}}_{{4}}{\bar n}_{{0}}
+2\,{\bar{\mu}}_{{1}}{\bar{\mu}}_{{5}}{\bar n}_{{0}}+6\,{{\bar{\mu}}_{{1}}}^{2}{{\bar{\mu}}_{{2}}}^{2}{\bar n}_{{2}}+4\,{{\bar{\mu}}_{{1}}}^{3}{\bar{\mu}}_{{3}}{\bar n}_{{2}}+3\,{{\bar{\mu}}_{{1}}}^{2}{\bar{\mu}}_{{4}}{\bar n}_{{1}}+{{\bar{\mu}}_{{3}}}^{2}{\bar n}_{{0}}\\
&&+6\,{\bar{\mu}}_{{1}}{\bar{\mu}}_{{2}}{\bar{\mu}}_{{3}}{\bar n}_{{1}}+{{\bar{\mu}}_{{1}}}^{6}{\bar n}_{{4}}+{{\bar{\mu}}_{{2}}}^{3}{\bar n}_{{1}}\\
m_{60}^{(3)}&=&4\,{{\bar{\mu}}_{{1}}}^{3}{\bar{\mu}}_{{3}}{\bar n}_{{1}}
+5\,{{\bar{\mu}}_{{1}}}^{4}{\bar{\mu}}_{{2}}{\bar n}_{{2}}+6\,{{\bar{\mu}}_{{1}}}^{2}{{\bar{\mu}}_{{2}}}^{2}{\bar n}_{{1}}+3\,{{\bar{\mu}}_{{1}}}^{2}{\bar{\mu}}_{{4}}{\bar n}_{{0}}+6\,{\bar{\mu}}_{{1}}{\bar{\mu}}_{{2}}{\bar{\mu}}_{{3}}{\bar n}_{{0}}
+{{\bar{\mu}}_{{1}}}^{6}{\bar n
}_{{3}}+{{\bar{\mu}}_{{2}}}^{3}{\bar n}_{{0}}\\
m_{60}^{(4)}&=&5\,{{\bar{\mu}}_{{1}}}^{4}{\bar{\mu}}_{{
2}}{\bar n}_{{1}}+4\,{{\bar{\mu}}_{{1}}}^{3}{\bar{\mu}}_{{3}}{\bar n}_{{0}}
+6\,{{\bar{\mu}}_{{1}}}^{2}{{\bar{\mu}}_{{2}}}^{2}{\bar n}
_{{0}}+{{\bar{\mu}}_{{1}}}^{6}{\bar n}_{{2}}\\
m_{60}^{(5)}&=&5\,{{\bar{\mu}}_{{1}}}^{4}{\bar{\mu}}_{{2}}{\bar n}_{{0}}+{{\bar{\mu}}_{{1}}}^{6}{\bar n}_{{1}}\\
m_{60}^{(6)}&=&{{\bar{\mu}}_{{1}}}^{6}{\bar n}_{{0}}\\
m_{70}^{(0)}&=&2\,{\bar{\mu}}_{{2}}{\bar{\mu}}_{{5}}{\bar n}_{{2}}+2\,{\bar{\mu}}_{{3}}{\bar {\mu}}_{{4}}{\bar n}_{{2}}+6\,{\bar{\mu}}_{{1}}{\bar{\mu}}_{{2}}{\bar{\mu}}_{{4}}{\bar n}_{{3}}+3\,{{\bar{\mu}}_{{1}}}^{2}{\bar{\mu}}_{{5}}{\bar n}_{{3}}+4\,{\bar{\mu}}_{{1}}{{\bar{\mu}}_{{2}}}^{3}{\bar n}_{{4}}+4\,{{\bar{\mu}}_{{1}}}^{3}{\bar{\mu}}_{{4}}{\bar n}_{{4}}\\
&&+5\,{{\bar{\mu}}_{{1}}}^{4}{\bar{\mu}}_{{3}}{\bar n}_{{5}}+10\,{{\bar{\mu}}_{{1}}}^{3}{{\bar{\mu}}_{{2}}}^{2}{\bar n}_{{5}}+2\,{\bar{\mu}}_{{1}}{\bar{\mu}}_{{6}}{\bar n}_{{2}}+3\,{\bar{\mu}}_{{1}}{{\bar{\mu}}_{{3}}}^{2}{\bar n}_{{3}}+3\,{{\bar{\mu}}_{{2}}}^{2}{\bar{\mu}}_{{3}}{\bar n}_{{3}}+6\,{{\bar{\mu}}_{{1}}}^{5}{\bar{\mu}}_{{2}}{\bar n}_{{6}}\\
&&+\alpha_{{0,0}}{{\bar{\mu}}_{{1}}}^{7}{\bar n}_{{7}}+{\bar{\mu}}_{{7}}{\bar n}_{{1}}+12\,{{\bar{\mu}}_{{1}}}^{2}{\bar{\mu}}_{{2}}{\bar{\mu}}_{{3}}{\bar n}_{{4}}\\
m_{70}^{(1)}&=&2\,{\bar{\mu}}_{{2}}{\bar{\mu}}_{{5}}{\bar n}_{{1}}+6\,{{\bar{\mu}}_{{1}}}^{5}{\bar{\mu}}_{{2}}{\bar n}_{{5}}+5\,{{\bar{\mu}}_{{1}}}^{4}{\bar{\mu}}_{{3}}{\bar n}_{{4}}+2\,{\bar{\mu}}_{{1}}{\bar{\mu}}_{{6}
}{\bar n}_{{1}}+10\,{{\bar{\mu}}_{{1}}}^{3}{{\bar{\mu}}_{{2}}}^{2}{\bar n}_{{4}}+2\,{\bar{\mu}}_{{3}}{\bar{\mu}}_{{4}}{\bar n}_{{1}}\\
&&+4\,{\bar{\mu}}_{{1}}{{\bar{\mu}}_{{2}}}^{3}{\bar n}_{{3}}+4\,{{\bar{\mu}}_{{1}}}^{3}{\bar{\mu}}_{{4}}{\bar n}_{{3}}+3\,{{\bar{\mu}}_{{2}}}^{2}{\bar{\mu}}_{{
3}}{\bar n}_{{2}}+6\,{\bar{\mu}}_{{1}}{\bar{\mu}}_{{2}}{\bar{\mu}}_{{4}}{\bar n}_{{2}}+3\,{\bar{\mu}}_{{1}}{{\bar{\mu}}_{{3}}}^{2}{\bar n}_{{2}}+3\,{{\bar{\mu}}_{{1}}}^{2}{\bar{\mu}}_{{5}}{\bar n}_{{2}}\\
&&+{{\bar{\mu}}_{{1}}}^{7}{\bar n}_{{6}}+{\bar{\mu}}_{{7}}{\bar n}_{{0}}+12\,{{\bar{\mu}}_{{1}}}^{2}{\bar{\mu}}_{{2}}{\bar{\mu}}_{{3}}{\bar n}_{{3}}\\
m_{70}^{(2)}&=&4\,{{\bar{\mu}}_{{1}}}^{3}{\bar{\mu}}_{{4}}{\bar n}_{{2}}+2\,{\bar{\mu}}_{{1}}{\bar{\mu}}_{{6}}{\bar n}_{{0}}+10\,{{\bar{\mu}}_{{
1}}}^{3}{{\bar{\mu}}_{{2}}}^{2}{\bar n}_{{3}}+5\,{{\bar{\mu}}_{{1}}}^{4}{\bar{\mu}}_{{3}}{\bar n}_{{3}}+4\,{\bar{\mu}}_{{1}}{{\bar{\mu}}_{{2}}}^{3}{\bar n}_{{2}}+2\,{\bar{\mu}}_{{3}}{\bar{\mu}}_{{4}}{\bar n}_{{0}}\\
&&+3\,{\bar{\mu}}_{{1}}{{\bar{\mu}}_{{3}}}^{2}{\bar n}_{{1}}+3\,{{\bar{\mu}}_{{2}}}^{2}{\bar{\mu}}_{{3}}{\bar n}_{{1}}+2\,{\bar{\mu}}_{{2}}{\bar{\mu}}_{{5}}{\bar n}_{{0}}+6\,{{\bar{\mu}}_{{1}}}^{5}{\bar{\mu}}_{{2}}{\bar n}_{{4}}+{{\bar{\mu}}_{{1}}}^{7}{\bar n}_{{5}}+6\,{\bar{\mu}}_{{1}}{\bar{\mu}}_{{2}}{\bar{\mu}}_{{4}}{\bar n}_{{1}}\\
&&+3\,{{\bar{\mu}}_{{1}}}^{2}{\bar{\mu}}_{{5}}{
\bar n}_{{1}}+12\,{{\bar{\mu}}_{{1}}}^{2}{\bar{\mu}}_{{2}}{\bar{\mu}}_{{3}}{\bar n }_{{2}}\\
m_{70}^{(3)}&=&4\,{{\bar{\mu}}_{{1}}}^{3}{\bar{\mu}}_{{4}}{\bar n}_{{1}}+5\,{{\bar{\mu}}_{{1}}}^{4}{\bar{\mu}}_{{3}}{\bar n}_{{2}}+10\,{{\bar{\mu}}_{{1}}}^{3}{{\bar{\mu}}_{{2}}}^{2}{\bar n}_{{2}}+3\,{{\bar{\mu}}_{{2}}}^{2}{\bar{\mu}}_{{3}}{\bar n
}_{{0}}+3\,{{\bar{\mu}}_{{1}}}^{2}{\bar{\mu}}_{{5}}{\bar n}_{{0}}+3\,{\bar{\mu}}_{{1}}{{\bar{\mu}}_{{3}}}^{2}{\bar n}_{{0}}\\
&&+4\,{\bar{\mu}}_{{1}}{{\bar{\mu}}_{{2}}}^{3}{\bar n}_{{1}}+6\,{{\bar{\mu}}_{{1}}}^{5}{\bar{\mu}}_{{2}}{\bar n}_{{3}}+
{{\bar{\mu}}_{{1}}}^{7}{\bar n}_{{4}}+12\,{{\bar{\mu}}_{{1}}}^{2}{\bar{\mu}}_{{2}}{\bar{\mu}}_{{3}}{\bar n}_{{1}}+6\,{\bar{\mu}}_{{1}}{\bar{\mu}}_{{2}}{\bar{\mu}}_{{4}}{\bar n}_{{0}}\\
m_{70}^{(4)}&=&4\,{{\bar{\mu}}_{{1}}}^{3}{\bar{\mu}}_{{4}}{\bar n}_{{0}}+5\,{{\bar{\mu}}_{{1}}}^{4}{\bar{\mu}}_{{3}}{\bar n}_{{1}}+4\,{\bar{\mu}}_{{1}}{{\bar{\mu}}_{{2}}}^{3}{\bar n}_{{0}}+10\,{{\bar{\mu}}_{{1}}}^{3}{{\bar{\mu}}_{{2}}}^{2}{\bar n}_{{1}}+6\,{{\bar{\mu}}_{{1}}}^{5}{\bar{\mu}}_{{2}}{\bar n}_{{2}}+12\,{{\bar{\mu}}_{{1}}}^{2}{\bar{\mu}}_{{2}}{\bar{\mu}}_{{3}}{\bar n}_{{0}}\\
&&+{{\bar{\mu}}_{{1}}}^{7}{\bar n}_{{3}}\\
m_{70}^{(5)}&=&6\,{{\bar{\mu}}_{{1}}}^{5}{\bar{\mu}}_{{2}}{\bar n}_{{1}}+10\,{{\bar{\mu}}_{{1}}}^{3}{{\bar{\mu}}_{{2}}}^{2}{\bar n}_{{0}}+{{\bar{\mu}}_{{1}}}^{7}{\bar n}_{{2}}+5\,{{\bar{\mu}}_{{1}}}^{4}{\bar{\mu}}_{{3}}{\bar n}_{{0}}\\
m_{70}^{(6)}&=&6\,{{\bar{\mu}}_{{1}}}^{5}{\bar{\mu}}_{{2}}{\bar n}_{{0}}+{{\bar{\mu}}_{{1}}}^{7}{\bar n}_{{1}}\\
m_{70}^{(7)}&=&{{\bar{\mu}}_{{1}}}^{7}{\bar n}_{{0}}\\
m_{80}^{(0)}&=&2\,{\bar{\mu}}_{{2}}{\bar{\mu}}_{{6}}{\bar n}_{{2}}+2\,{\bar{\mu}}_{{1}}{\bar{\mu}}_{{7}}{\bar n}_{{2}}
+6\, {{\bar{\mu}}_{{1}}}^{5}{\bar{\mu}}_{{3}}{\bar n}_{{6}}+4\, {{\bar{\mu}}_{{1}}}^{3}{\bar{\mu}}_{{5}}{\bar n_4}+15\,{{\bar{\mu}}_{{1}}}^{4}{{\bar{\mu}}_{{2}}}^{2}{\bar n}_{{6}}+3\, {{\bar{\mu}}_{{1}}}^{2}{\bar{\mu}}_{{6}}{\bar n}_{{3}}\\
&&+2\, {\bar{\mu}}_{{3}}{\bar{\mu}}_{{5}}{\bar n}_{{2}}
+7\, {{\bar{\mu}}_{{1}}}^{6}{\bar{\mu}}_{{2}}{\bar n_7} +10\,{{\bar{\mu}}_{{1}}}^{2}{{\bar{\mu}}_{{2}}}^{3}{\bar n}_{{5}}+5\, {{\bar{\mu}}_{{1}}}^{4}{\bar{\mu}}_{{4}}{\bar n}_{{5}}+3\, {{\bar{\mu}}_{{2}}}^{2}{\bar{\mu}}_{{4}}{\bar n}_{{3}}
+3\, {\bar{\mu}}_{{2}}{{\bar{\mu}}_{{3}}}^{2}{
\bar n}_{{3}}\\
&&+ {\bar{\mu}}_{{8}}{\bar n}_{{1}}+ {{\bar{\mu}}_{{2}}}^{4}{\bar n}_{{4}}+ {{\bar{\mu}}_{{4}}}^{2}{\bar n}_{{2}}+{{\bar{\mu}}_{{1}}}^{8}{\bar n}_{{8}}+6\, {{\bar{\mu}}_{{1}}}^{2}{{\bar{\mu}}_{{3}}}^{2}{\bar n}_{{4}}+12\, {\bar{\mu}}_{{1}}{{\bar{\mu}}_{{2}}}^{2}{\bar{\mu}}_{{3}}{\bar n}_{{4}}\\
&&+6\, {\bar{\mu}}_{{1}}{\bar{\mu}}_{{3}}{\bar{\mu}}_{{4}}
{\bar n}_{{3}}+6\, {\bar{\mu}}_{{1}}{\bar{\mu}}_{{2}}{\bar{\mu}}_{{5}}{\bar n}_{{3}}+20\, {{\bar{\mu}}_{{1}}}^{3}{\bar{\mu}}_{{2}}{\bar{\mu}}_{{3}}{\bar n}_{{5}}+12\, {{\bar{\mu}}_{{1}}}^{2}{\bar{\mu}}_{{2}}{\bar{\mu}}_{{4}}{\bar n}_{{4}}\\
m_{80}^{(1)}&=&15\,{{\bar{\mu}}_{{1}}}^{4}{{\bar{\mu}}_{{2}}}^{2}{ \bar n}_{{5}}+6\, {{\bar{\mu}}_{{1}}}^{2}{{\bar{\mu}}_{{3}}}^{2}{\bar n}_{{3}}+7\, {{\bar{\mu}}_{{1}}}^{6}{\bar{\mu}}_{{2}}{\bar n}_{{6}}+10\, {{\bar{\mu}}_{{1}}}^{2}{{\bar{\mu}}_{{2}}}^{3}{\bar n}_{{4}}+3\, {{\bar{\mu}}_{{1}}}^{2}{\bar{\mu}}_{{6}}{\bar n}_{{2}}+5\, {{\bar{\mu}}_{{1}}}^{4}{\bar{\mu}
}_{{4}}{\bar n}_{{4}}\\
&&+3\, {\bar{\mu}}_{{2}}{{\bar{\mu}}_{{3}}}^{2}{\bar n}_{{2}}+2\, {\bar{\mu}}_{{2}}{\bar{\mu}}_{{6}}{\bar n}_{{1}}+2\, {\bar{\mu}}_{{1}}{\bar{\mu}}_{{7}}{\bar n}_{{1}}+3\, {{\bar{\mu}}_{{2}}}^{2}{\bar{\mu}}_{{4}}{\bar n}_{{2}}+2\, {\bar{\mu}}_{{3}}{\bar{\mu}}_{{5}}{\bar n}_{{1}}+ {\bar{\mu}}_{{8}}{\bar n}_{{0}}\\
&&+ {{\bar{\mu}}_{{2}}}^{4}{\bar n}_{{3}}+ {{\bar{\mu}}_{{4}}}^{2}{\bar n}_{{1}}+ {{\bar{\mu}}_{{1}}}^{8}{\bar n}_{{7}}+4\, {{\bar{\mu}}_{{1}}}^{3}{\bar{\mu}}_{{5}}{\bar n}_{{3}}+6\, {\bar{\mu}}_{{1}}{\bar{\mu}}_{{2}
}{\bar{\mu}}_{{5}}{\bar n}_{{2}}+6\, {\bar{\mu}}_{{1}}{
\bar{\mu}}_{{3}}{\bar{\mu}}_{{4}}{\bar n}_{{2}}\\
&&+20\, {{\bar{\mu}}_{{1}}}^{3}{\bar{\mu}}_{{2}}{\bar{\mu}}_{{3}}{\bar n}_{{4}}+12\, {\bar{\mu}}_{{1}}{{\bar{\mu}}_{{2}}}^{2}{\bar{\mu}}_{{3}}{\bar n}_{{3}}+6\, {{\bar{\mu}}_{{1}}}^{5}{\bar{\mu}}_{{3}}{\bar n}_{{5}}
+12\, {{\bar{\mu}}_{{1}}}^{2}{\bar{\mu}}_{{2}}{\bar{\mu}}_{{4}}{\bar n}_{{3}}\\
m_{80}^{(2)}&=&2\,{\bar{\mu}}_{{3}}{\bar{\mu}}_{{5}}{\bar n}_{{0}}+6\, {{\bar{\mu}}_{{1}}}^{2}{{\bar{\mu}}_{{3}}}^{2}{\bar n}_{{2}}+6\, {{\bar{\mu}}_{{1}}}^{5}{\bar{\mu}}_{{3}}{\bar n}_{{4}}+15\, {{\bar{\mu}}_{{1}}}^{4}{{\bar{\mu}}_{{2}}}^{2}{\bar n}_{{4}}+
2\, {\bar{\mu}}_{{2}}{\bar{\mu}}_{{6}}{\bar n}_{{0}}+3\, {\bar{\mu}}_{{2}}{{\bar{\mu}}_
{{3}}}^{2}{\bar n}_{{1}}\\
&&+3\, {{\bar{\mu}}_{{1}}}^{2}{\bar{\mu}}_{{6}}{\bar n}_{{1}}+3\, {{\bar{\mu}}_{{2}}}^{2}{\bar{\mu}}_{{4}}{\bar n}_{{1}}+7\, {{\bar{\mu}}_{{1}}}^{6}{\bar{\mu}}_{{2}}{\bar n}_{{5}}+5\, {{\bar{\mu}}_{{1}}}^{4}{\bar{\mu}}_{{4}}{\bar n}_{{3}}+10\, {{\bar{\mu}}_{{1}}}^{2}{{\bar{\mu}}_{{2}}}^{3}{\bar n}_{{3}}+4\, {{\bar{\mu}}_{{1}}}^{3}{\bar{\mu}}_{{5}}{\bar n}_{{2}}\\
&&+2\, {\bar{\mu}}_{{1}}{\bar{\mu}}_{{7}}{\bar n}_{{0}}+ {{\bar{\mu}}_{{4}}}^{2}{\bar n}_{{0}}+ {{
\bar{\mu}}_{{1}}}^{8}{\bar n}_{{6}}+ {{\bar{\mu}}_{{2}}}^{4}{\bar n}_{{2}}+6\, {\bar{\mu}}_{{1}}{\bar{\mu}}_{{2}}{\bar{\mu}}_{{5}}{\bar n}_{{1}}+6\, {\bar{\mu}}_{{1}}{\bar{\mu}}_{{3}}{\bar{\mu}}_{{4}}{\bar n}_{{1}}\\
&&+20\, {{\bar{\mu}}_{{1}}}^{3}{\bar{\mu}}_{{2}}{\bar{\mu}}_{{3}}{\bar n}_{{3}}+12\, {\bar{\mu}}_{{1}}{{\bar{\mu}}_{{2}}}^{2}{\bar{\mu}}_{{3}}{\bar n}_{{2}}+12\, {{\bar{\mu}}_{{1}}}^{2}{\bar{\mu}}_{{2}}{\bar{\mu}}_{{4}}{\bar n}_{{2}}\\
m_{80}^{(3)}&=&6\,{{\bar{\mu}}_{{1}}}^{2}{{\bar{\mu}}_{{3}}}^{2}{\bar n}_{{1}}+4\, {{\bar{\mu}}_{{1}}}^{3}{\bar{\mu}}_{{5}}{
\bar n}_{{1}}+15\, {{\bar{\mu}}_{{1}}}^{4}{{\bar{\mu}}_{{2}}}^{2}{\bar n}_{{3}}+6\, {{\bar{\mu}}_{{1}}}^{5}{\bar{\mu}}_{{3}}{\bar n}_{{3}}+5\, {{\bar{\mu}}_{{1}}}^{4}{\bar{\mu}}_{{4}}{\bar n}_{{2}}+3\, {\bar{\mu}}_{{2}}{{\bar{\mu}}_{{3}}}^{2}{\bar n}_{{0}}\\
&&+7\, {{\bar{\mu}}_{{1}}}^{6}{\bar{\mu}}_{{2}}{\bar n}_{{4}}+3\, {{\bar{\mu}}_
{{2}}}^{2}{\bar{\mu}}_{{4}}{\bar n}_{{0}}+10\, {{\bar{\mu}}_{{1}}}^{2}{{\bar{\mu}}_{{2}}}^{3}{\bar n}_{{2}}+3\, {{\bar{\mu}}_{{1}}}^{2}{\bar{\mu}}_{{6}}{\bar n}_{{0}}+ {{\bar{\mu}}_{{1}}}^{8}{\bar n}_{{5}}+6\, {\bar{\mu}}_{{1}}{\bar{\mu}}_{{2}}{\bar{\mu}}_{{5}}{\bar n}_{{0}}\\
&&+ {{\bar{\mu}}_{{2}}}^{4}{\bar n}_{{1}}+20\, {{\bar{\mu}}_{{1}}}^{3}{\bar{\mu}}_{{2}}{\bar{\mu}}_{{3}}{\bar n}_{{2}}+12\, {\bar{\mu}}_{{1}}{{\bar{\mu}}_{{2}}}^{2}{\bar{\mu}}_{{3}}{\bar n}_{{1}}+12\, {{\bar{\mu}}_{{1}}}^{2}{\bar{\mu}}_{{2}}{\bar{\mu}}_{{4}}{\bar n}_{{1}}+6\, {\bar{\mu}}_{{1}}{\bar{\mu}}_{{3}}{\bar{\mu}}_{{4}}{\bar n}_{{0}}\\
m_{80}^{(4)}&=&7\,{{\bar{\mu}}_{{1}}}^{6}{\bar{\mu}}_{{2}}{\bar n}_{{3}}+6\, {{\bar{\mu}}_{{1}}}^{5}{\bar{\mu}}_{{3}}{\bar n}_{{2}}+15\, {{\bar{\mu}}_{{1}}}^{4}{{\bar{\mu}}_{{2}}}^{2}{\bar n}_{{2}}+10\, {{\bar{\mu}}_{{1}}}^{2}{{\bar{\mu}}_{{2}}}^{3}{
\bar n}_{{1}}+5\,\alpha_{{4
,0}}{{\bar{\mu}}_{{1}}}^{4}{\bar{\mu}}_{{4}}{\bar n}_{{1}}+4\, {{\bar{\mu}}_{{1}}}^{3}{\bar{\mu}}_{{5}}{\bar n}_{{0}}\\
&&+6\, {{\bar{\mu}}_{{1}}}^{2}{{\bar{\mu}}_{{3}}}^{2}{\bar n}_{{0}}+ {{\bar{\mu}}_{{1}}}^{8}{\bar n}_
{{4}}+ {{\bar{\mu}}_{{2}}}^{4}{\bar n}_{{0}}+20\, {{\bar{\mu}}_{{1}}}^{3}{\bar{\mu}}_{{2}}{\bar{\mu}}_{{3}}{\bar n}_{{1}}+12\, {{\bar{\mu}}_{{1}}}^{2}{\bar{\mu}}_{{2}}{\bar{\mu}}_{{4}}{\bar n}_{{0}}
+12\, {\bar{\mu}}_{{1}}{{\bar{\mu}}_{{2}}}^{2}{\bar{\mu}}_{{3}}{\bar n}_{{0}}\\
m_{80}^{(5)}&=&10\,{{\bar{\mu}}_{{1}}}^{2}{{\bar{\mu}}_{{2}}}^{3}{\bar n}_{{0}}+15\, {{\bar{\mu}}_{{1}}}^{4}{{\bar{\mu}}_{{2}}}^{2}{\bar n}_{{1}}+5\, {{\bar{\mu}}_{{1}}}^{4}{\bar{\mu}}_{{4}}{\bar n}_{{0}}+6\, {{\bar{\mu}}_{{1}}}^{5}{\bar{\mu}}_{{3}}{\bar n}_{{1}}+7\, {{\bar{\mu}}_{{1}}}^{6}{\bar{\mu}}_{{2}}{\bar n}_{{2}}+20\, {{\bar{\mu}}_{{1}}}^{3}{\bar{\mu}}_{{2}}{\bar{\mu}}_{{3}}{\bar n}_{{0}}\\
&&+ {{\bar{\mu}}_{{1}}}^{8}{\bar n}_{{3}}\\
m_{80}^{(6)}&=&15\,{{\bar{\mu}}_{{1}}}^{4}{{\bar{\mu}}_{{2}}}^{2}{\bar n}_{{0}}+6\, {{\bar{\mu}}_{{1}}}^{5}{\bar{\mu}}_{{3}}{\bar n}_{{0}}+7\, {{\bar{\mu}}_{{1}}}^{6}{\bar{\mu}}_{{2}}{\bar n}_{{1}}+ {{
\bar{\mu}}_{{1}}}^{8}{\bar n}_{{2}}\\
m_{80}^{(7)}&=&7\,{{\bar{\mu}}_{{1}}}^{6}{\bar{\mu}}_{{2}}{\bar n}_{{0}}
+{{\bar{\mu}}_{{1}}}^{8}{\bar n}_{{1}}\\
m_{80}^{(8)}&=&{{\bar{\mu}}_{{1}}}^{8}{\bar n}_{{0}}
\end{eqnarray*}
and
\begin{eqnarray*}
\tilde m_{21}^{(0)}&=&4\,{\bar{\mu}}_{{1}}{\bar{\mu}}_{{3}}{\bar n}_{{2}}+6\,{{\bar{\mu}}_{{1}}}^{2}{\bar{\mu}}_{{2}}{\bar n}_{{3}}+2\,{{\bar{\mu}}_{{2}}}^{2}{\bar n}_{{2}}+2\,{\bar{\mu}}_{{4}}{\bar n}_{{1}}+2\,{{\bar{\mu}}_{{1}}}^{4}{\bar n}_{{4}}\\
\tilde m_{21}^{(1)}&=&2\,{{\bar{\mu}}_{{1}}}^{4}{\bar n}_{{3}}+6\,{{\bar{\mu}}_{{1}}}^{2}{\bar{\mu}}_{{2}}{\bar n}_{{2}}
+4\,{\bar{\mu}}_{{1}}{\bar{\mu}}_{{3}}{\bar n}_{{1}}+2\,{{\bar{\mu}}_{{2}}}^{2}{\bar n}_{{1}}
+2\,{\bar{\mu}}_{{4}}{\bar n}_{{0}}\\
\tilde m_{21}^{(2)}&=&6\,{{\bar{\mu}}_{{1}}}^{2}{\bar{\mu}}_{{2}}{\bar n}_{{1}}+2\,{{\bar{\mu}}_{{1}}}^{4}{\bar n}_{{2}}+4\,{\bar{\mu}}_{{1}}{\bar{\mu}}_{{3}}{\bar n}_{{0}}+2\,{{\bar{\mu}}_{{2}}}^{2}{\bar n}_{{0}}\\
\tilde m_{21}^{(3)}&=&6\,{{\bar{\mu}}_{{1}}}^{2}{\bar{\mu}}_{{2}}{\bar n}_{{0}}+2\,{{\bar{\mu}}_{{1}}}^{4}{\bar n}_{{1}}\\
\tilde m_{21}^{(4)}&=&2\,{{\bar{\mu}}_{{1}}}^{4}{\bar n}_{{0}}\\
\tilde m_{31}^{(0)}&=&4\,{\bar{\mu}}_{{2}}{\bar{\mu}}_{{4}}{\bar n}_{{2}}+4\,{\bar{\mu}}_{{1}}{\bar{\mu}}_{{5}}{\bar n}_{{2}}+10\,{{\bar{\mu}}_{{1}}}^{4}{\bar{\mu}}_{{2}}{\bar n}_{{5}}+8\,{{\bar{\mu}}_{{1}}}^{3}{\bar{\mu}}_{{3}}{\bar n}_{{4}}+12\,{{\bar{\mu}}_{{1}}}^{2}{{\bar{\mu}}_{{2}}}^{2}{\bar n}_{{4}}+2\,{\bar{\mu}}_{{6}}{\bar n}_{{1}}+2\,{{\bar{\mu}}_{{3}}}^{2}{\bar n}_{{2}}\\
&&+6\,{{\bar{\mu}}_{{1}}}^{2}{\bar{\mu}}_{{4}}{\bar n}_{{3}}+2\,{{\bar{\mu}}_{{2}}}^{3}{\bar n}_{{3}}+12\,{\bar{\mu}}_{{1}}{\bar{\mu}}_{{2}}{\bar{\mu}}_{{3}}{\bar n}_{{3}}+2\,{{\bar{\mu}}_{{1}}}^{6}{\bar n}_{{6}}\\
\tilde m_{31}^{(1)}&=&10\,{{\bar{\mu}}_{{1}}}^{4}{\bar{\mu}}_{{2}}{\bar n}_{{4}}+4\,{\bar{\mu}}_{{2}}{\bar{\mu}}_{{4}}{\bar n}_{{1}}+4\,{\bar{\mu}}_{{1}}{\bar{\mu}}_{{5}}{\bar n}_{{1}}+12\,{{\bar{\mu}}_{{1}}}^{2}{{\bar{\mu}}_{{2}}}^{2}{\bar n}_{{3}}+8\,{{\bar{\mu}}_{{1}}}^{3}{\bar{\mu}}_{{3}}{\bar n}_{{3}}+6\,{{\bar{\mu}}_{{1}}}^{2}{\bar{\mu}}_{{4}}{\bar n}_{{2}}\\
&&+12\,{\bar{\mu}}_{{1}}{\bar{\mu}}_{{2}}{\bar{\mu}}_{{3}}{\bar n}_{{2}}
+2\,{{\bar{\mu}}_{{1}}}^{6}{\bar n}_{{5}}+2\,{\bar{\mu}}_{{6}}{\bar n}_{{0}}+2\,{{\bar{\mu}}_{{2}}}^{3}{\bar n}_{{2}}+2\,{{\bar{\mu}}_{{3}}}^{2}{\bar n}_{{1}}\\
\tilde m_{31}^{(2)}&=&8\,{{\bar{\mu}}_{{1}}}^{3}{\bar{\mu}}_{{3}}{\bar n}_{{2}}
+4\,{\bar{\mu}}_{{1}}{\bar{\mu}}_{{5}}{\bar n}_{{0}}+12\,{{\bar{\mu}}_{{1}}}^{2}{{\bar{\mu}}_{{2}}}^{2}{\bar n}_{{2}}+2\,{{\bar{\mu}}_{{1}}}^{6}{\bar n}_{{4}}+6\,{{\bar{\mu}}_{{1}}}^{2}{\bar{\mu}}_{{4}}{\bar n}_{{1}}+2\,{{\bar{\mu}}_{{2}}}^{3}{\bar n}_{{1}}+2\,{{\bar{\mu}}_{{3}}}^{2}{\bar n}_{{0}}\\
&&+4\,{\bar{\mu}}_{{2}}{\bar{\mu}}_{{4}}{\bar n}_{{0}}+10\,{{\bar{\mu}}_{{1}}}^{4}{\bar{\mu}}_{{2}}{\bar n}_{{3}}+12\,{\bar{\mu}}_{{1}}{\bar{\mu}}_{{2}}{\bar{\mu}}_{{3}}{\bar n}_{{1}}\\
\tilde m_{31}^{(3)}&=&6\,{{\bar{\mu}}_{{1}}}^{2}{\bar{\mu}}_{{4}}{\bar n}_{{0}}+12\,{\bar{\mu}}_{{1}}{\bar{\mu}}_{{2}}{\bar{\mu}}_{{3}}{\bar n}_{{0}}
+8\,{{\bar{\mu}}_{{1}}}^{3}{\bar{\mu}}_{{3}}{\bar n}_{{1}}+12\,{{\bar{\mu}}_{{1}}}^{2}{{\bar{\mu}}_{{2}}}^{2}{\bar n}_{{1}}+10\,{{\bar{\mu}}_{{1}}}^{4}{\bar{\mu}}_{{2}}{\bar n}_{{2}}+2\,{{\bar{\mu}}_{{1}}}^{6}{\bar n}_{{3}}\\
&&+2\,{{\bar{\mu}}_{{2}}}^{3}{\bar n}_{{0}}\\
\tilde m_{31}^{(4)}&=&10\,{{\bar{\mu}}_{{1}}}^{4}{\bar{\mu}}_{{2}}{\bar n}_{{1}}+8\,{{\bar{\mu}}_{{1}}}^{3}{\bar{\mu}}_{{3}}{\bar n}_{{0}}
+12\,{{\bar{\mu}}_{{1}}}^{2}{{\bar{\mu}}_{{2}}}^{2}{\bar n}_{{0}}+2\,{{\bar{\mu}}_{{1}}}^{6}{\bar n}_{{2}}\\
\tilde m_{31}^{(5)}&=&2\,{{\bar{\mu}}_{{1}}}^{6}{\bar n}_{{1}}+10\,{{\bar{\mu}}_{{1}}}^{4}{\bar{\mu}}_{{2}}{\bar n}_{{0}}\\
\tilde m_{31}^{(6)}&=&2\,{{\bar{\mu}}_{{1}}}^{6}{\bar n}_{{0}}\\
\tilde m_{41}^{(0)}&=&2\,{{\bar{\mu}}_{{1}}}^{8}{\bar n}_{{8}}+4\,{\bar{\mu}}_{{1}}{\bar{\mu}}_{{7}}{\bar n}_{{2}}+6\,{\bar{\mu}}_{{2}}{{\bar{\mu}}_{{3}}}^{2}{\bar n}_{{3}}
+14\,{{\bar{\mu}}_{{1}}^{6}}{\bar{\mu}}_{{2}}{\bar n}_{{7}}+12\,{{\bar{\mu}}_{{1}}}^{5}{\bar{\mu}}_{{3}}{\bar n}_{{6}}+12\,{{\bar{\mu}}_{{1}}}^{2}{{\bar{\mu}}_{{3}}}^{2}{\bar n}_{{4}}+2\,{{\bar{\mu}}_{{2}}}^{4}{\bar n}_{{4}}\\
&&+30\,{{\bar{\mu}}_{{1}}}^{4}{{\bar{\mu}}_{{2}}}^{2}{\bar n}_{{6}}+8\,{{\bar{\mu}}_{{1}}}^{3}{\bar{\mu}}_{{5}}{\bar n}_{{4}}+6\,{{\bar{\mu}}_{{2}}}^{2}{\bar{\mu}}_{{4}}{\bar n}_{{3}}+6\,{{\bar{\mu}}_{{1}}}^{2}{\bar{\mu}}_{{6}}{\bar n}_{{3}}+10\,{{\bar{\mu}}_{{1}}}^{4}{\bar{\mu}}_{{4}}{\bar n}_{{5}}+2\,{{\bar{\mu}}_{{4}}}^{2}{\bar n}_{{2}}+2\,{\bar{\mu}}_{{8}}{\bar n}_{{1}}\\
&&+24\,{\bar{\mu}}_{{1}}{{\bar{\mu}}_{{2}}}^{2}{\bar{\mu}}_{{3}}{\bar n}_{{4}}+12\,{\bar{\mu}}_{{1}}{\bar{\mu}}_{{3}}{\bar{\mu}}_{{4}}{\bar n}_{{3}}+24\,{{\bar{\mu}}_{{1}}}^{2}{\bar{\mu}}_{{2}}{\bar{\mu}}_{{4}}{\bar n}_{{4}}+40\,{{\bar{\mu}}_{{1}}}^{3}{\bar{\mu}}_{{2}}{\bar{\mu}}_{{3}}{\bar n}_{{5}}+12\,{\bar{\mu}}_{{1}}{\bar{\mu}}_{{2}}{\bar{\mu}}_{{5}}{\bar n}_{{3}}\\
&&
+4\,{\bar{\mu}}_{{2}}{\bar{\mu}}_{{6}}{\bar n}_{{2}}+4\,{\bar{\mu}}_{{3}}{\bar{\mu}}_{{5}}{\bar n}_{{2}}+20\,{{\bar{\mu}}_{{1}}}^{2}{{\bar{\mu}}_{{2}}}^{3}{\bar n}_{{5}}\\
\tilde m_{41}^{(1)}&=&4\,{\bar{\mu}}_{{2}}{\bar{\mu}}_{{6}}{\bar n}_{{1}}+2\,{{\bar{\mu}}_{{1}}}^{8}{\bar n}_{{7}}+12\,{\bar{\mu}}_{{1}}{\bar{\mu}}_{{3}}{\bar{\mu}}_{{4}}{\bar n}_{{2}}+24\,{{\bar{\mu}}_{{1}}}^{2}{\bar{\mu}}_{{2}}{\bar{\mu}}_{{4}}{\bar n}_{{3}}+4\,{\bar{\mu}}_{{3}}{\bar{\mu}}_{{5}}{\bar n}_{{1}}
+12\,{\bar{\mu}}_{{1}}{\bar{\mu}}_{{2}}{\bar{\mu}}_{{5}}{\bar n}_{{2}}\\
&&+40\,{{\bar{\mu}}_{{1}}}^{3}{\bar{\mu}}_{{2}}{\bar{\mu}}_{{3}}{\bar n}_{{4}}+24\,{\bar{\mu}}_{{1}}{{\bar{\mu}}_{{2}}}^{2}{\bar{\mu}}_{{3}}{\bar n}_{{3}}+6\,{\bar{\mu}}_{{2}}{{\bar{\mu}}_{{3}}}^{2}{\bar n}_{{2}}+4\,{\bar{\mu}}_{{1}}{\bar{\mu}}_{{7}}{\bar n}_{{1}}+20\,{{\bar{\mu}}_{{1}}}^{2}{{\bar{\mu}}_{{2}}}^{3}{\bar n}_{{4}}+14\,{{\bar{\mu}}_{{1}}}^{6}{\bar{\mu}}_{{2}}{\bar n}_{{6}}\\
&&+12\,{{\bar{\mu}}_{{1}}}^{2}{{\bar{\mu}}_{{3}}}^{2}{\bar n}_{{3}}+12\,{{\bar{\mu}}_{{1}}}^{5}{\bar{\mu}}_{{3}}
{\bar n}_{{5}}+8\,{{\bar{\mu}}_{{1}}}^{3}{\bar{\mu}}_{{5}}{\bar n}_{{3}}+2\,{{\bar{\mu}}_{{2}}}^{4}{\bar n}_{{3}}+2\,{{\bar{\mu}}_{{4}}}^{2}{\bar n}_{{1}}+10\,{{\bar{\mu}}_{{1}}}^{4}{\bar{\mu}}_{{4}}{\bar n}_{{4}}+2\,{\bar{\mu}}_{{8}}{\bar n}_{{0}}\\
&&+30\,{{\bar{\mu}}_{{1}}}^{4}{{\bar{\mu}}_{{2}}}^{2}{\bar n}_{{5}}+6\,{{\bar{\mu}}_{{2}}}^{2}{\bar{\mu}}_{{4}}{\bar n}_{{2}}+6\,{{\bar{\mu}}_{{1}}}^{2}{\bar{\mu}}_{{6}}{\bar n}_{{2}}\\
\tilde m_{41}^{(2)}&=&2\,{{\bar{\mu}}_{{4}}}^{2}{\bar n}_{{0}}+24\,{{\bar{\mu}}_{{1}}}^{2}{\bar{\mu}}_{{2}}{\bar{\mu}}_{{4}}{\bar n}_{{2}}+40\,{{\bar{\mu}}_{{1}}}^{3}{\bar{\mu}}_{{2}}{\bar{\mu}}_{{3}}{\bar n}_{{3}}+14\,{{\bar{\mu}}_{{1}}}^{6}{\bar{\mu}}_{{2}}{\bar n}_{{5}}+6\,{{\bar{\mu}}_{{1}}}^{2}{\bar{\mu}}_{{6}}{\bar n}_{{1}}+8\,{{\bar{\mu}}_{{1}}}^{3}{\bar{\mu}}_{{5}}{\bar n}_{{2}}\\
&&+12\,{{\bar{\mu}}_{{1}}}^{5}{\bar{\mu}}_{{3}}{\bar n}_{{4}}+12\,{{\bar{\mu}}_{{1}}}^{2}{{\bar{\mu}}_{{3}}}^{2}{\bar n}_{{2}}
+30\,{{\bar{\mu}}_{{1}}}^{4}{{\bar{\mu}}_{{2}}}^{2}{\bar n}_{{4}}+4\,{\bar{\mu}}_{{1}}{\bar{\mu}}_{{7}}{\bar n}_{{0}}+2\,{{\bar{\mu}}_{{2}}}^{4}{\bar n}_{{2}}+10\,{{\bar{\mu}}_{{1}}}^{4}{\bar{\mu}}_{{4}}{\bar n}_{{3}}\\
&&+2\,{{\bar{\mu}}_{{1}}}^{8}{\bar n}_{{6}}+12\,{\bar{\mu}}_{{1}}{\bar{\mu}}_{{2}}{\bar{\mu}}_{{5}}{\bar n}_{{1}}+12\,{\bar{\mu}}_{{1}}{\bar{\mu}}_{{3}}{\bar{\mu}}_{{4}}{\bar n}_{{1}}+6\,{{\bar{\mu}}_{{2}}}^{2}{\bar{\mu}}_{{4}}{\bar n}_{{1}}+20\,{{\bar{\mu}}_{{1}}}^{2}{{\bar{\mu}}_{{2}}}^{3}{\bar n}_{{3}}+4\,{\bar{\mu}}_{{3}}{\bar{\mu}}_{{5}}{\bar n}_{{0}}\\
&&+24\,{\bar{\mu}}_{{1}}{{\bar{\mu}}_{{2}}}^{2}{\bar{\mu}}_{{3}}{\bar n}_{{2}}+6\,{\bar{\mu}}_{{2}}{{\bar{\mu}}_{{3}}}^{2}{\bar n}_{{1}}
+4\,{\bar{\mu}}_{{2}}{\bar{\mu}}_{{6}}{\bar n}_{{0}}\\
\tilde m_{41}^{(3)}&=&+24\,{\bar{\mu}}_{{1}}{{\bar{\mu}}_{{2}}}^{2}{\bar{\mu}}_{{3}}{\bar n}_{{1}}
+12\,{\bar{\mu}}_{{1}}{\bar{\mu}}_{{3}}{\bar{\mu}}_{{4}}{\bar n}_{{0}}+12\,{\bar{\mu}}_{{1}}{\bar{\mu}}_{{2}}{\bar{\mu}}_{{5}}{\bar n}_{{0}}+24\,{{\bar{\mu}}_{{1}}}^{2}{\bar{\mu}}_{{2}}{\bar{\mu}}_{{4}}{\bar n}_{{1}}+40\,{{\bar{\mu}}_{{1}}}^{3}{\bar{\mu}}_{{2}}{\bar{\mu}}_{{3}}{\bar n}_{{2}}\\
&&+12\,{{\bar{\mu}}_{{1}}}^{5}{\bar{\mu}}_{{3}}{\bar n}_{{3}}+20\,{{\bar{\mu}}_{{1}}}^{2}{{\bar{\mu}}_{{2}}}^{3}{\bar n}_{{2}}
+14\,{{\bar{\mu}}_{{1}}}^{6}{\bar{\mu}}_{{2}}{\bar n}_{{4}}+6\,{{\bar{\mu}}_{{1}}}^{2}{\bar{\mu}}_{{6}}{\bar n}_{{0}}+6\,{\bar{\mu}}_{{2}}{{\bar{\mu}}_{{3}}}^{2}{\bar n}_{{0}}
+8\,{{\bar{\mu}}_{{1}}}^{3}{\bar{\mu}}_{{5}}{\bar n}_{{1}}\\
&&+30\,{{\bar{\mu}}_{{1}}}^{4}{{\bar{\mu}}_{{2}}}^{2}{\bar n}_{{3}}+12\,{{\bar{\mu}}_{{1}}}^{2}{{\bar{\mu}}_{{3}}}^{2}{\bar n}_{{1}}
+6\,{{\bar{\mu}}_{{2}}}^{2}{\bar{\mu}}_{{4}}{\bar n}_{{0}}+10\,{{\bar{\mu}}_{{1}}}^{4}{\bar{\mu}}_{{4}}{\bar n}_{{2}}+2\,{{\bar{\mu}}_{{1}}}^{8}{\bar n}_{{5}}2\,{{\bar{\mu}}_{{2}}}^{4}{\bar n}_{{1}}\\
\tilde m_{41}^{(4)}&=&2\,{{\bar{\mu}}_{{2}}}^{4}{\bar n}_{{0}}+24\,{\bar{\mu}}_{{1}}{{\bar{\mu}}_{{2}}}^{2}{\bar{\mu}}_{{3}}{\bar n}_{{0}}+24\,{{\bar{\mu}}_{{1}}}^{2}{\bar{\mu}}_{{2}}{\bar{\mu}}_{{4}}{\bar n}_{{0}}+40\,{{\bar{\mu}}_{{1}}}^{3}{\bar{\mu}}_{{2}}{\bar{\mu}}_{{3}}{\bar n}_{{1}}+14\,{{\bar{\mu}}_{{1}}}^{6}{\bar{\mu}}_{{2}}{\bar n}_{{3}}+12\,{{\bar{\mu}}_{{1}}}^{2}{{\bar{\mu}}_{{3}}}^{2}{\bar n}_{{0}}\\
&&+30\,{{\bar{\mu}}_{{1}}}^{4}{{\bar{\mu}}_{{2}}}^{2}{\bar n}_{{2}}+8\,{{\bar{\mu}}_{{1}}}^{3}{\bar{\mu}}_{{5}}{\bar n}_{{0}}+10\,{{\bar{\mu}}_{{1}}}^{4}{\bar{\mu}}_{{4}}{\bar n}_{{1}}+2\,{{\bar{\mu}}_{{1}}}^{8}{\bar n}_{{4}}+20\,{{\bar{\mu}}_{{1}}}^{2}{{\bar{\mu}}_{{2}}}^{3}{\bar n}_{{1}}+12\,{{\bar{\mu}}_{{1}}}^{5}{\bar{\mu}}_{{3}}{\bar n}_{{2}}\\
\tilde m_{41}^{(5)}&=&10\,{{\bar{\mu}}_{{1}}}^{4}{\bar{\mu}}_{{4}}{\bar n}_{{0}}+30\,{{\bar{\mu}}_{{1}}}^{4}{{\bar{\mu}}_{{2}}}^{2}{\bar n}_{{1}}+14\,{{\bar{\mu}}_{{1}}}^{6}{\bar{\mu}}_{{2}}{\bar n}_{{2}}+2\,{{\bar{\mu}}_{{1}}}^{8}{\bar n}_{{3}}+20\,{{\bar{\mu}}_{{1}}}^{2}{{\bar{\mu}}_{{2}}}^{3}{\bar n}_{{0}}
+12\,{{\bar{\mu}}_{{1}}}^{5}{\bar{\mu}}_{{3}}{\bar n}_{{1}}\\
&&+40\,{{\bar{\mu}}_{{1}}}^{3}{\bar{\mu}}_{{2}}{\bar{\mu}}_{{3}}{\bar n}_{{0}}\\
\tilde m_{41}^{(6)}&=&12\,{{\bar{\mu}}_{{1}}}^{5}{\bar{\mu}}_{{3}}{\bar n}_{{0}}+30\,{{\bar{\mu}}_{{1}}}^{4}{{\bar{\mu}}_{{2}}}^{2}{\bar n}_{{0}}+14\,{{\bar{\mu}}_{{1}}}^{6}{\bar{\mu}}_{{2}}{\bar n}_{{1}}+2\,{{\bar{\mu}}_{{1}}}^{8}{\bar n}_{{2}}\\
\tilde m_{41}^{(7)}&=&2\,{{\bar{\mu}}_{{1}}}^{8}{\bar n}_{{1}}
+14\,{{\bar{\mu}}_{{1}}}^{6}{\bar{\mu}}_{{2}}{\bar n}_{{0}}\\
\tilde m_{41}^{(8)}&=&2\,{{\bar{\mu}}_{{1}}}^{8}{\bar n}_{{0}}
\end{eqnarray*}
with
\begin{eqnarray*}
%\alpha_{0,0}&=&2\,{\bar a}_{{0,1}}{\bar b}_{{0,0}}\\
%\alpha_{1,0}&=&2\,{\bar a}_{{0,1}}{\bar b}_{{1,0}}+2\,{\bar a}_{{1,1}}{\bar b}_{{0,0}}\\
%\alpha_{2,0}&=&2\,{\bar a}_{{0,1}}{\bar b}_{{2,0}}+2\,{\bar a}_{{1,1}}{\bar b}_{{1,0}}+2\,{\bar a}_{{2,1}}{\bar b}_{{0,0}}\\
%\alpha_{3,0}&=&2\,{\bar a}_{{0,1}}{\bar b}_{{3,0}}+2\,{\bar a}_{{1,1}}{\bar b}_{{2,0}}+2\,{\bar a}_{{2,1}}{\bar b}_{{1,0}}+2\,{\bar a}_{{3,1}}{\bar b}_{{0,0}}\\
%\alpha_{4,0}&=&2\,{\bar a}_{{0,1}}{\bar b}_{{4,0}}+2\,{\bar a}_{{1,1}}{\bar b}_{{3,0}}+2\,{\bar a}_{{2,1}}{\bar b}_{{2,0}}+2\,{\bar a}_{{3,1}}{\bar b}_{{1,0}}+2\,{\bar a}_{{4,1}}{\bar b}_{{0,0}}\\
%\alpha_{5,0}&=&2\,{\bar a}_{{0,1}}{\bar b}_{{5,0}}+2\,{\bar a}_{{1,1}}{\bar b}_{{4,0}}+2\,{\bar a}_{{2,1}}{\bar b}_{{3,0}}+2\,{\bar a}_{{3,1}
%}{\bar b}_{{2,0}}+2\,{\bar a}_{{4,1}}{\bar b}_{{1,0}}+2\,{\bar a}_{{5,1}}{\bar b}_{{0,0}}\\
\alpha_{6,0}&=&2\,{\bar a}_{{0,1}}{\bar b}_{{6,0}}+2\,{\bar a}_{{1,1}}{\bar b}_{{5,0}}+2\,{\bar a}_{{2,1}}{\bar b}_{{4,0}}+2\,{\bar a}_{{3,1}
}{\bar b}_{{3,0}}+2\,{\bar a}_{{4,1}}{\bar b}_{{2,0}}+2\,{\bar a}_{{5,1}}{\bar b}_{{1,0}}+2\,{\bar a}_{{6,1}}{\bar b}_{{0,0}}\\
\alpha_{7,0}&=&2\,{\bar a}_{{0,1}}{\bar b}_{{7,0}}+2\,{\bar a}_{{1,1}}{\bar b}_{{6,0}}+2\,{\bar a}_{{2,1}}{\bar b}_{{5,0}}+2\,{\bar a}_{{3,1}
}{\bar b}_{{4,0}}+2\,{\bar a}_{{4,1}}{\bar b}_{{3,0}}+2\,{\bar a}_{{5,1}}{\bar b}_{{2,0}}+2\,{\bar a}_{{6,1}}{\bar b}_{{1,0}}\\
&&+2\,{\bar a}_{{7,1}}{\bar b}_{{0,0}}\\
\alpha_{8,0}&=&2\,{\bar a}_{{0,1}}{\bar b}_{{8,0}}+2\,{\bar a}_{{1,1}}{\bar b}_{{7,0}}+2\,{\bar a}_{{2,1}}{\bar b}_{{6,0}}+2\,{\bar a}_{{3,1}
}{\bar b}_{{5,0}}+2\,{\bar a}_{{4,1}}{\bar b}_{{4,0}}+2\,{\bar a}_{{5,1}}{\bar b}_{{3,0}}+2\,{\bar a}_{{6,1}}{\bar b}_{{2,0}}\\
&&+2\,{\bar a}_{{7,1}}{\bar b}_{{1,0}}+2\,{\bar a}_{{8,1}}{\bar b}_{{0,0}}\\
%\alpha_{0,1}&=&2\,{{\bar a}_{{0,1}}}^{3}{\bar b}_{{0,2}}+4\,{\bar a}_{{0,1}}{\bar a}_{{0,2}}{\bar b}_{{0,1}}+2\,{\bar a}_{{0,3}}{\bar b}_{{0,0}}\\
%\alpha_{1,1}&=&2\,{{\bar a}_{{0,1}}}^{3}{\bar b}_{{1,2}}+6\,{{\bar a}_{{0,1}}}^{2}{\bar a}_{{1,1}}{\bar b}_{{0,2}}+4\,{\bar a}_{{0,1}}{\bar a
%}_{{0,2}}{\bar b}_{{1,1}}+4\,{\bar a}_{{0,1}}{\bar a}_{{1,2}}{\bar b}_{{0,1}}+4\,{\bar a}_{{0,2}}{\bar a}_{{1,1}}{\bar b}_{{0
%,1}}+2\,{\bar a}_{{0,3}}{\bar b}_{{1,0}}\\
%&&+2\,{\bar a}_{{1,3}}{\bar b}_{{0,0}}\\
\alpha_{2,1}&=&2\,{{\bar a}_{{0,1}}}^{3}{\bar b}_{{2,2}}+6\,{{\bar a}_{{0,1}}}^{2}{\bar a}_{{1,1}}{\bar b}_{{1,2}}+6\,{{\bar a}_{{0,1}}}^{2}{
\bar a}_{{2,1}}{\bar b}_{{0,2}}+6\,{\bar a}_{{0,1}}{{\bar a}_{{1,1}}}^{2}{\bar b}_{{0,2}}+4\,{\bar a}_{{0,1}}{\bar a}_{{0,2}}{
\bar b}_{{2,1}}+2\,{\bar a}_{{2,3}}{\bar b}_{{0,0}}\\
&&+4\,{\bar a}_{{0,1}}{\bar a}_{{2,2}}{\bar b}_{{0,1}}+4\,{
\bar a}_{{0,2}}{\bar a}_{{1,1}}{\bar b}_{{1,1}}+4\,{\bar a}_{{0,2}}{\bar a}_{{2,1}}{\bar b}_{{0,1}}+4\,{\bar a}_{{1,1}}{\bar a}_{{1,2}}{\bar b}_{{0,1}}+2\,{\bar a}_{{0,3}}{\bar b}_{{2,0}}+2\,{\bar a}_{{1,3}}{\bar b}_{{1,0}}\\
&&+4\,{\bar a}_{{0,1}}{\bar a}_{{1,2}}{\bar b}_{{1,1}}\\
\gamma_{i,1}&=&\frac{1}{2}\alpha_{i,0},\ \ i=0,...,8,\\
\gamma_{i,3}&=&\frac{1}{2}\alpha_{i,1},\ \ i=0,1,2,
\end{eqnarray*}
\begin{eqnarray*}
\bar{\mu}_1&=&{\mu_{{1}}^{-1}}\\
\bar{\mu}_2&=&-{{\mu_{{1}}^{-3}}}{\mu_{{2}}}\\
\bar{\mu}_3&=&-{{\mu_{{1}}^{-5}}}({\mu_{{1}}\mu_{{3}}-2\,{\mu_{{2}}^{2}}})\\
\bar{\mu}_4&=&-{{\mu_{{1}}^{-7}}}({{\mu_{{1}}^{2}}\mu_{{4}}-5\,\mu_{{1}}\mu_{{2}}\mu_{{3}}+5\,{\mu_{{2}}^{3}}})\\
\bar{\mu}_5&=&-{{\mu_{{1}}^{-9}}}({{\mu_{{1}}^{3}}\mu_{{5}}-6\,{\mu_{{1}}^{2}}\mu_{{2}}\mu_{{4}}-3\,{\mu_{{1}}^{2}}{\mu_{{3}}^{2}}
+21\,\mu_{{1}}{\mu_{{2}}^{2}}\mu_{{3}}-14\,{\mu_{{2}}^{4}}})\\
\bar{\mu}_6&=&-{{\mu_{{1}}^{-11}}}({{\mu_{{1}}^{4}}\mu_{{6}}-7\,{\mu_{{1}}^{3}}\mu_{{2}}\mu_{{5}}-7\,{\mu_{{1}}^{3}}\mu_{{3}}\mu_{{4}}
+28\,{\mu_{{1}}^{2}}{\mu_{{2}}^{2}}\mu_{{4}}+28\,{\mu_{{1}}^{2}}\mu_{{2}}{\mu_{{3}}^{2}}+42\,{\mu_{{2}}}^{5}}\\
&&-84\,\mu_{{1}}{\mu_{{2}}^{3}}\mu_{{3}})\\
\bar{\mu}_7&=&-{{\mu_{{1}}^{-13}}}({{\mu_{{1}}}^{5}\mu_{{7}}-8\,{\mu_{{1}}^{4}}\mu_{{2}}\mu_{{6}}-8\,{\mu_{{1}}^{4}}\mu_{{3}}\mu_{{5}}
-4\,{\mu_{{1}}^{4}}{\mu_{{4}}^{2}}+36\,{\mu_{{1}}^{3}}{\mu_{{2}}^{2}}\mu_{{5}}+12\,{\mu_{{1}}^{3}}{\mu_{{3}}^{3}}}\\
&&{+72\,{\mu_{{1}}^{3}}\mu_{{2}}\mu_{{3}}\mu_{{4}}-120\,{\mu_{{1}}^{2}}{\mu_{{2}}^{3}}\mu_{{4}}-180\,{\mu_{{1}}^{2}}{\mu_{{2}}^{2}}{\mu_{{3}}^{2}}
+330\,\mu_{{1}}{\mu_{{2}}^{4}}\mu_{{3}}-132\,{\mu_{{2}}}^{6}})\\
\bar{\mu}_8&=&-{{\mu_{{1}}^{-15}}}({{\mu_{{1}}}^{6}\mu_{{8}}-9\,{\mu_{{1}}}^{5}\mu_{{2}}\mu_{{7}}-9\,{\mu_{{1}}}^{5}\mu_{{3}}\mu_{{6}}
-9\,{\mu_{{1}}}^{5}\mu_{{4}}\mu_{{5}}+45\,{\mu_{{1}}^{4}}{\mu_{{2}}^{2}}\mu_{{6}}+429\,{\mu_{{2}}^{7}}}\\
&&+90\,{\mu_{{1}}^{4}}\mu_{{2}}\mu_{{3}}\mu_{{5}}+45\,{\mu_{{1}}^{4}}\mu_{{2}}{\mu_{{4}}}^{2
}+45\,{\mu_{{1}}^{4}}{\mu_{{3}}^{2}}\mu_{{4}}-165\,{\mu_{{1}}^{3}}{\mu_{{2}}^{3}}\mu_{{5}}-495\,{\mu_{{1}}^{3}}{\mu_{{2}}^{2}}\mu_{{3}}\mu_{{4}}\\
&&-165\,{\mu_{{1}}^{3}}\mu_{{2}}{\mu_{{3}}^{3}}+495\,{\mu_{{1}}^{2}}{\mu_{{2}}^{4}}\mu_{{4}}+990\,{\mu_{{1}}^{2}}{\mu_{{2}}^{3}}{\mu_{{3}}
}^{2}-1287\,\mu_{{1}}{\mu_{{2}}}^{5}\mu_{{3}}),
\end{eqnarray*}
\begin{eqnarray*}
\bar n_0&=&{\mu_{{1}}^{-1}}\\
\bar n_1&=&-2\,{{\mu_{{1}}^{-2}}}{\mu_{{2}}}\\
\bar n_2&=&-{\,{{\mu_{{1}}^{-3}}}(3\mu_{{1}}\mu_{{3}}-4\,{\mu_{{2}}^{2}})}\\
\bar n_3&=&-4\,{{\mu_{{1}}^{-4}}}({{\mu_{{1}}^{2}}\mu_{{4}}-3\,\mu_{{1}}\mu_{{2}}\mu_{{3}}+2\,{\mu_{{2}}^{3}}})\\
\bar n_4&=&-{\,{{\mu_{{1}}^{-5}}}({5\mu_{{1}}^{3}}\mu_{{5}}-16\,{\mu_{{1}}^{2}}\mu_{{2}}\mu_{{4}}-9\,{\mu_{{1}}^{2}}{\mu_{{3}}^{2}}
+36\,\mu_{{1}}{\mu_{{2}}^{2}}\mu_{{3}}-16\,{\mu_{{2}}^{4}})}\\
\bar n_5&=&-2\,{{\mu_{{1}}^{-6}}}(3\,{\mu_{{1}}^{4}}\mu_{{6}}-10\,{\mu_{{1}}^{3}}\mu_{{2}}\mu_{{5}}-12\,{\mu_{{1}}^{3}}\mu_{{3}}\mu_{{4}}
+24\,{\mu_{{1}}^{2}}{\mu_{{2}}^{2}}\mu_{{4}}+27\,{\mu_{{1}}^{2}}\mu_{{2}}{\mu_{{3}}^{2}}\\
&&-48\,\mu_{{1}}{\mu_{{2}}^{3}}\mu_{{3}}+16\,{\mu_{{2}}^{5}})\\
\bar n_6&=&-\,{{\mu_{{1}}^{-7}}}({7\mu_{{1}}^{5}}\mu_{{7}}-24\,{\mu_{{1}}^{4}}\mu_{{2}}\mu_{{6}}-30\,{\mu_{{1}}^{4}}\mu_{{3}}\mu_{{5}}-16\,{\mu_{{1}}^{4}}{\mu_{{4
}}^{2}}+60\,{\mu_{{1}}^{3}}{\mu_{{2}}^{2}}\mu_{{5}}\\
&&+144\,{\mu_{{1}}^{3}}\mu_{{2}}\mu_{{3}}\mu_{{4}}+27\,{\mu_{{1}}^{3}}{\mu_{{3}}^{3}}-128\,
{\mu_{{1}}^{2}}{\mu_{{2}}^{3}}\mu_{{4}}-216\,{\mu_{{1}}^{2}}{\mu_{{2}}^{2}}{\mu_{{3}}^{2}}+240\,\mu_{{1}}{\mu_{{2}}^{4}}\mu_{{3}}-64\,{\mu_{{2}}^{6}})\\
\bar n_7&=&-4\,{{\mu_{{1}}^{-8}}}(2\,{\mu_{{1}}^{6}}\mu_{{8}}-7\,{\mu_{{1}}^{5}}\mu_{{2}}\mu_{{7}}-9\,{\mu_{{1}}^{5}}\mu_{{3}}\mu_{{6}}-10\,{\mu_{{1}}^{5}}\mu_{{4
}}\mu_{{5}}+18\,{\mu_{{1}}^{4}}{\mu_{{2}}^{2}}\mu_{{6}}\\
&&+45\,{\mu_{{1}}^{4}}\mu_{{2}}\mu_{{3}}\mu_{{5}}+24\,{\mu_{{1}}^{4}}\mu_{{2}}{\mu_{{4}
}}^{2}+27\,{\mu_{{1}}^{4}}{\mu_{{3}}^{2}}\mu_{{4}}-40\,{\mu_{{1}}^{3}}{\mu_{{2}}^{3}}\mu_{{5}}-144\,{\mu_{{1}}^{3}}{\mu_{{2}}^{2}}\mu_{{3}}
\mu_{{4}}\\
&&-54\,{\mu_{{1}}^{3}}\mu_{{2}}{\mu_{{3}}^{3}}+80\,{\mu_{{1}}^{2}}{\mu_{{2}}^{4}}\mu_{{4}}+180\,{\mu_{{1}}^{2}}{\mu_{{2}}^{3}}{\mu_{{
3}}^{2}}-144\,\mu_{{1}}{\mu_{{2}}^{5}}\mu_{{3}}+32\,{\mu_{{2}}^{7}})\\
\bar n_8&=&-\,{{\mu_{{1}}^{-9}}}(9{\mu_{{1}}^{7}}\mu_{{9}}-32\,{\mu_{{1}}^{6}}\mu_{{2}}\mu_{{8}}-42\,{\mu_{{1}}^{6}}\mu_{{3}}\mu_{{7}}
-48\,{\mu_{{1}}^{6}}\mu_{{4}}\mu_{{6}}-25\,{\mu_{{1}}^{6}}{\mu_{{5}}^{2}}\\
&&+84\,{\mu_{{1}}^{5}}{\mu_{{2}}^{2}}\mu_{{7}}+216\,{\mu_{{1}}^{5}}\mu_{{2}}\mu_{{3}}\mu_{{6}}+
240\,{\mu_{{1}}^{5}}\mu_{{2}}\mu_{{4}}\mu_{{5}}+135\,{\mu_{{1}}^{5}}{\mu_{{3}}^{2}}\mu_{{5}}+144\,{\mu_{{1}}^{5}}\mu_{{3}}{\mu_{{4}}^{2}}\\
&&-192\,{\mu_{{1}}^{4}}{\mu_{{2}}^{3}}\mu_{{6}}-720\,{\mu_{{1}}^{4}}{\mu_{{2}}^{2}}\mu_{{3}}\mu_{{5}}-384\,{\mu_{{1}}^{4}}{\mu_{{2}}^{2}}{\mu_{
{4}}^{2}}-864\,{\mu_{{1}}^{4}}\mu_{{2}}{\mu_{{3}}^{2}}\mu_{{4}}-81\,{\mu_{{1}}^{4}}{\mu_{{3}}^{4}}\\
&&+400\,{\mu_{{1}}^{3}}{\mu_{{2}}^{4}}\mu_{{5}}+1920\,{\mu_{{1}}^{3}}{\mu_{{2}}^{3}}\mu_{{3}}\mu_{{4}}+1080\,{\mu_{{1}}^{3}}{\mu_{{2}}^{2}}{\mu_{{3}}^{3}}
-768\,{\mu_{{1}}^{2}}{\mu_{{2}}^{5}}\mu_{{4}}-2160\,{\mu_{{1}}^{2}}{\mu_{{2}}^{4}}{\mu_{{3}}^{2}}\\
&&+1344\,\mu_{{1}}{\mu_{{2}}^{6}}\mu_{{3}}-256\,{\mu_{{2}}^{8}}),\\
\end{eqnarray*}
\begin{eqnarray*}
\mu_1&=&{(-h_{6})}^{\frac{1}{6}}\\
\mu_2&=&-\frac{1}{6}\,\left( -h_{{6}} \right) ^{-\frac{5}{6}}{h_{{7}}}\\
\mu_3&=&{\frac {1}{72}}\, \left( -h_{{6}} \right) ^{-{\frac {11}{6}}}\left( 12\,h_{{6}}h_{{8}}-5\,{h_{{7}}^{2}} \right)\\
\mu_4&=&-{\frac {1}{1296}}\, \left( -h_{{6}} \right) ^{-{\frac {17}{6}}}\left( 216\,{h_{{6}}^{2}}h_{{9}}-180\,h_{{6}}h_{{7}}h_{{8}}+55\,{h_{{7}}^{3}} \right)\\
\mu_5&=&{\frac {1}{31104}}\, \left( -h_{{6}} \right) ^{-{\frac {23}{6}}}\left( 5184\,{h_{{6}}^{3}}h_{{10}}-4320\,{h_{{6}}}^{2}h_{{7}}h_{{9}}-2160\,{h_{{6}}^{2}}{h_{{8}}^{2}}+3960\,h_{{6}}{h_{{7}}^{2}}h_{{8}}\right.\\
&&\left.-935\,{h_{{7}}^{4}} \right)\\
\mu_6&=&-{\frac {1}{186624}}\, \left( -h_{{6}} \right) ^{-{\frac {29}{6}}}\left( 31104\,{h_{{6}}^{4}}h_{{11}}-25920\,{h_{{6}}^{3}}h_{{7}}h_{{10}}-25920\,{h_{{6}}^{3}}h_{{8}}h_{{9}} +4301\,{h_{{7}}^{5}}\right.\\
&&\left.+23760\,{h_
{{6}}^{2}}{h_{{7}}^{2}}h_{{9}}+23760\,{h_{{6}}^{2}}h_{{7}}{h_{{8}}}^{2}-22440\,h_{{6}}{h_{{7}}^{3}}h_{{8}}\right)\\
\mu_7&=&{\frac {1}{6718464}}\, \left( -h_{{6}} \right) ^{-{\frac {35}{6}}}\left( 1119744\,{h_{{6}}^{5}}h_{{12}}-933120\,{h_{{6}}^{4}}h_{{7}}h_{{11}}-933120\,{h_{{6}}^{4}}h_{{8}}h_{{10}}\right.\\
&&+855360\,{h_{{6}}^{3}}{h_{{7}}^{2}}h_{{10}}+1710720\,{h_{{6}}^{3}}h_{{7}}h_{{8}}h_{{9}}+285120\,{h_{{6}}}^
{3}{h_{{8}}^{3}}-807840\,{h_{{6}}^{2}}{h_{{7}}^{3}}h_{{9}}\\
&&\left.-
466560\,{h_{{6}}^{4}}{h_{{9}}^{2}}-1211760\,{h_{{6}}^{2}}{h_{{7}}^{2}}{h_{{8}}^{2}}+774180\,h_{{6}}{h_{{7}}^{4}}h_{{8}}-124729\,{h_{{7}}^{6}} \right)\\
\mu_8&=&-{\frac {1}{40310784}}\, \left( -h_{{6}} \right) ^{-{\frac {41}{6}}}\left( 6718464\,{h_{{6}}^{6}}h_{{13}}-5598720\,{h_{{6}}^{5}}h_{{7}}h_{{12}}-5598720\,{h_{{6}}^{5}}h_{{8}}h_{{11}}\right.\\
&&-5598720\,{h_{{6}}^{5}}h_{{9}}h_{{10}}+5132160\,{h_{{6}}^{4}}{h_{{7}}}^{2}h_{{11}}+10264320\,{h_{{6}}^{4}}h_{{7}}h_{{8}}h_{{10}}+5132160\,{h_
{{6}}^{4}}h_{{7}}{h_{{9}}^{2}}\\
&&+5132160\,{h_{{6}}^{4}}{h_{{8}}^{2}}h_{{9}}-4847040\,{h_{{6}}^{3}}{h_{{7}}^{3}}h_{{10}}-14541120\,{h_{{6}}}^{3
}{h_{{7}}^{2}}h_{{8}}h_{{9}}-4847040\,{h_{{6}}^{3}}h_{{7}}{h_{{8}}}^{3}\\
&&\left.+4645080\,{h_{{6}}^{2}}{h_{{7}}^{4}}h_{{9}}+9290160\,{h_{{6}}^{2}}{h_
{{7}}^{3}}{h_{{8}}^{2}}-4490244\,h_{{6}}{h_{{7}}^{5}}h_{{8}} +623645\,{
h_{{7}}^{7}}\right)\\
\mu_9&=&{\frac {1}{1934917632}}\,\left( -h_{{6}} \right) ^{-{\frac {47}{6}}} \left( 322486272\,h_{{14}}{h_{{6}}^{7}}-
268738560\,h_{{7}}h_{{13}}{h_{{6}}^{6}}\right.\\
&&-268738560\,h_{{9}}h_{{11}}{h_{{6}}^{6}}-134369280\,{h_{{10}}
}^{2}{h_{{6}}^{6}}+246343680\,{h_{{7}}^{2}}h_{{12}}{h_{{6}}^{5}}\\
&&+492687360\,h_{{7}}h_{{8}}h_{{11}}{h_{{6}}^{5}}+492687360\,h_{{7}}h_{{9
}}h_{{10}}{h_{{6}}^{5}}+246343680\,{h_{{8}}^{2}}h_{{10}}{h_{{6}}^{5}}\\
&&+246343680\,h_{{8}}{h_{{9}}^{2}}{h_{{6}}^{5}}-232657920\,{h_{{7}}^{3}}h
_{{11}}{h_{{6}}^{4}}-697973760\,{h_{{7}}^{2}}h_{{8}}h_{{10}}{h_{{6}}}^{4}\\
&&-348986880\,{h_{{7}}^{2}}{h_{{9}}^{2}}{h_{{6}}^{4}}-697973760\,h_{{
7}}{h_{{8}}^{2}}h_{{9}}{h_{{6}}^{4}}-58164480\,{h_{{8}}^{4}}{h_{{6}}}^{4}\\
&&+222963840\,{h_{{7}}^{4}}h_{{10}}{h_{{6}}^{3}}+891855360\,{h_{{7}}}
^{3}h_{{8}}h_{{9}}{h_{{6}}^{3}}+445927680\,{h_{{7}}^{2}}{h_{{8}}^{3}}{h_{{6}}^{3}}\\
&&-215531712\,{h_{{7}}^{5}}h_{{9}}{h_{{6}}^{2}}-538829280\,{
h_{{7}}^{4}}{h_{{8}}^{2}}{h_{{6}}^{2}}+209544720\,{h_{{7}}^{6}}h_{{8}}h_{{6}}\\
&&\left.-25569445\,{h_{{7}}^{8}}-268738560\,h_{{8}}h_{{12}}{h_{{6}}^{6}} \right),
\end{eqnarray*}
and
% [inline block 0: 1 envs, 33646 chars -> math_tex | \begin{eqnarray*} %\bar b_{01}&=&\frac{1}{2}\,a_{{1,1}}+b_{{0,2}}\\...]

\begin{eqnarray*}
\bar a_{6,1}&=&\frac{\sqrt {2}}{16}\left( 3240\,{h^{3}_{{0,3}}}{h^{3}_{{2,1}}}+17820\,{h^{2}_{{0,3}}}{h^{2}_{{1,2}}}{h^{2}_{{2,1}}}+231\,{h^{6}_{{1,2}}}+6930\,h_{{0,3}}{h^{4}_{{1,2}}}h_{{2,1}}\right.\\
&&+48\,h_{{0,3}}h_{{6,1}}-6480\,{h^{2}_{{0,3}}}h_{{1,2}}h_{{2,1}}h_{{3,1}}-3240\,{h^{2}_{{0,3}}}{h^{2}_{{2,1}}}h_{{2,2}}-1632\,h_{{0,3}}h_{{0,4}}{h^{3}_{{2,1}}}\\
&&-40\,{h^{3}_{{2,2}}}-2520\,h_{{0,3}}{h^{3}_{{1,2}}}h_{{3,1}}-7560\,h_{{0,3}}{h^{2}_{{1,2}}}h_{{2,1}}h_{{2,2}}-6480\,h_{{0,3}}h_{{1,2}}h_{{1,3}}{h^{2}_{{2,1}}}\\
&&-3024\,h_{{0,4}}{h^{2}_{{1,2}}}{h^{2}_{{2,1}}}-630\,{h^{4}_{{1,2}}}h_{{2,2}}-2520\,{h^{3}_{{1,2}}}h_{{1,3}}h_{{2,1}}+720\,{h^{2}_{{0,3}}}h_{{2,1}}h_{{4,1}}\\
&&+48\,h_{{1,2}}h_{{5,2}}+840\,h_{{0,3}}{h^{2}_{{1,2}}}h_{{4,1}}+1680\,h_{{0,3}}h_{{1,2
}}h_{{2,1}}h_{{3,2}}+1680\,h_{{0,3}}h_{{1,2}}h_{{2,2}}h_{{3,1}}\\
&&+1440\,
h_{{0,3}}h_{{1,3}}h_{{2,1}}h_{{3,1}}+720\,h_{{0,3}}{h^{2}_{{2,1}}}h_{{
2,3}}+840\,h_{{0,3}}h_{{2,1}}{h^{2}_{{2,2}}}+840\,{h^{2}_{{1,2}}}h_{
{1,3}}h_{{3,1}}\\
&&+672\,h_{{0,4}}{h^{2}_{{2,1}}}h_{{2,2}}+160\,h_{{0,5}
}{h^{3}_{{2,1}}}+1344\,h_{{0,4}}h_{{1,2}}h
_{{2,1}}h_{{3,1}}+280\,{h^{3}_{{1,2}}}h_{{3,2}}\\
&&+840\,{h^{2}_{{1,2}}}h_{{2,1}}h_{{2,3}}+420\,{h^{2}_{{1,2}}
}{h^{2}_{{2,2}}}+1680\,h_{{1,2}}h_{{1,3}}h_{{2,1}}h_{{2,2}}+672\,h
_{{1,2}}h_{{1,4}}{h^{2}_{{2,1}}}\\
&&+360\,{h^{2}_{{1,3}}}{h^{2}_{{2,1}}}-
240\,h_{{0,3}}h_{{1,2}}h_{{5,1}}-240\,h_{{0,3}}h_{{2,1}}h_{{4,2}}-240
\,h_{{0,3}}h_{{2,2}}h_{{4,1}}\\
&&-192\,h_
{{0,4}}h_{{2,1}}h_{{4,1}}-240\,h_{{1,2}}h_{{1,3}}h_{{4,1}}-240\,h_{{1,2}}h_{{2,1}
}h_{{3,3}}-240\,h_{{1,2}}h_{{2,2}}h_{{3,2}}\\
&&-240\,h_{{1,2}}h_{{2,3}}h_{
{3,1}}-240\,h_{{1,3}}h_{{2,1}}h_{{3,2}}-240\,h_{{1,3}}h_{{2,2}}h_{{3,1
}}-240\,
h_{{2,1}}h_{{2,2}}h_{{2,3}}\\
&&
+48\,h_{{1,3}}h_{{5,1}}+48\,h_{{2,1}}h_{{4,3}}+
48\,h_{{2,2}}h_{{4,2}}+
24\,{h^{2}_{{3,2}}}-16\,h_{{6,2}}+360\,{h^{2}_{{0,3}}}{h^{2}_{{3,1}}}\\
&&+48\,h_{{2,3}}h_{{4,1}}-192\,h_{{1,4}}h_{{2,1}}h_{{3,1}}-96\,h_{{0,4}}{h^{2}_{{3,1}}}-96\,{h^{2}_{{2,1}}}h_{{2,4}}-240\,h_{{0,3}}h_{{3,1}}h_{{3,2}}\\
&&\left.+48\,h_{{3,1}}h_{{3,3}}-120\,{h^{2}_{{1,2}}
}h_{{4,2}}\right)\\
\bar a_{7,1}&=&-\frac{1}{16}\,\sqrt {2} \left( -160\,h_{{1,5}}{h_{{2,1}}^{3}}-48\,h_{{5,3}}h_{{2,1}}+16\,h_{{7,2}}+96\,h_{{3,4}}{h_{{2,1}}^{2}}-1386\,{h_{{1,2}}^{5}}h_{{2,2}}\right.\\
&&-48\,h_{{1,3}}h_{{6,1}}-48\,h_{{2,3}}h_{{5,1}}+96\,h_{{1,4}}{h_{{3,1}}^{2}}-48\,h_{{3,3}}h_{{4,1}}-48\,h_{{3,1}}h_{{4,3}}-48\,h_{{0,3}}h_{{7,1}}\\
&&+120\,{h_{{1,2}}^{2}}h_{{5,2}}+120\,h_{{1,2}}{h_{{3,2}}^{2}}+120\,{h_{{2,2}}^{2}}h_{{3,2}}+630\,{h_{{1,2}}^{4}}h_{{3,2}}+1260\,{h_{{1,2}}^{3}}{h_{{2,2}}^{2}}\\
&&-280\,{h_{{1,2}}^{3}}h_{{4,2}}-48\,h_{{3,2}}h_{{4,2}}-48\,h_{{2,2}}h_{{5,2}}-48\,h_{{1,2}}h_{{6,2}}+429\,{h_{{1,2}}^{7}}-280\,h_{{1,2}}{h_{{2,2}}^{3}}\\
&&+77220\,{h_{{0,3}}^{2}}{h_{{1,2}}^{3}}{h_{{2,1}}^{2}}+18018\,h_{{0,3}}{h_{{1,2}}^{5}}h_{{2,1}}-9720\,{h_{{0,3}}^{3}}{h_{{2,1}}^{2}}h_{{3,1}}-9720\,{h_{{0,3}}^{2}}h_{{1,3}}{h_{{2,1}}^{3}}\\
&&\left.-6930\,h_{{0,3}}{h_{{1,2}}^{4}}h_{{3,1}}+3240\,{h_{{0,3}}^{2}}h_{{1,2}}{h_{{3,1}}^{2}}+3240\,{h_{{0,3}}^{2}}{h_{{2,1}}^{2}}h_{{3,2}}+2520\,h_{{0,3}}{h_{{1,2}}^{3}}h_{{4,1}}\right.\\
&&+1632\,h_{{0,3}}h_{{1,4}}{h_{{2,1}}^{3}}-720\,{h_{{0,3}}^{2}}h_{{2,1}}h_{{5,1}}-720\,{h_{{0,3}}^{2}}h_{{3,1}}h_{{4,1}}-840\,h_{{0,3}}{h_{{1,2}}^{2}}h_{{5,1}}\\
&&-720\,h_{{0,3}}h_{{1,3}}{h_{{3,1}}^{2}}-720\,h_{{0,3}}{h_{{2,1}}^{2}}h_{{3,3}}-840\,h_{{0,3}}{h_{{2,2}}^{2}}h_{{3,1}}+240\,h_{{0,3}}h_{{1,2}}h_{{6,1}}\\
&&+240\,h_{{0,3}}h_{{2,1}}h_{{5,2}}+240\,h_{{0,3}}h_{{2,2}}h_{{5,1}}+240\,h_{{0,3}}h_{{3,1}}h_{{4,2}}+240\,h_{{0,3}}h_{{3,2}}h_{{4,1}}\\
&&-11088\,h_{{0,4}}{h_{{1,2}}^{3}}{h_{{2,1}}^{2}}+1632\,h_{{0,4}}h_{{1,3}}{h_{{2,1}}^{3}}-672\,h_{{0,4}}h_{{1,2}}{h_{{3,1}}^{2}}-672\,h_{{0,4}}{h_{{2,1}}^{2}}h_{{3,2}}\\
&&+192\,h_{{0,4}}h_{{2,1}}h_{{5,1}}+192\,h_{{0,4}}h_{{3,1}}h_{{4,1}}+1440\,h_{{0,5}}h_{{1,2}}{h_{{2,1}}^{3}}-480\,h_{{0,5}}{h_{{2,1}}}^{2}h_{{3,1}}\\
&&-6930\,{h_{{1,2}}^{4}}h_{{1,3}}h_{{2,1}}+2520\,{h_{{1,2}}^{3}}h_{{1,3}}h_{{3,1}}+3240\,h_{{1,2}}{h_{{1,3}}^{2}}{h_{{2,1}}^{2}}-840\,{h_{{1,2}}^{2}}h_{{1,3}}h_{{4,1}}\\
&&-720\,{h_{{1,3}}^{2}}h_{{2,1}}h_{{3,1}}-720\,h_{{1,3}}{h_{{2,1}}^{2}}h_{{2,3}}-840\,h_{{1,3}}h_{{2,1}}{h_{{2,2}}^{2}}+240\,h_{{1,2}}h_{{1,3}}h_{{5,1}}\\
&&+240\,h_{{1,3}}h_{{2,1}}h_{{4,2}}+240\,h_{{1,3}}h_{{2,2}}h_{{4,1}}+240\,h_{{1,3}}h_{{3,1}}h_{{3,2}}+2520\,{h_{{1,2}}^{3}}h_{{2,1}}h_{{2,3}}\\
&&-840\,{h_{{1,2}}^{2}}h_{{2,3}}h_{{3,1}}+240\,h_{{1,2}}h_{{2,3}}h_{{4,1}}+240\,h_{{2,1}}h_{{2,3}}h_{{3,2}}+240\,h_{{2,2}}h_{{2,3}}h_{{3,1}}\\
&&+3024\,{h_{{1,2}}^{2}}h_{{1,4}}{h_{{2,1}}^{2}}-672\,h_{{1,4}}{h_{{2,1}}^{2}}h_{{2,2}}+192\,h_{{1,4}}h_{{2,1}}h_{{4,1}}-840\,{h_{{1,2}}^{2}}h_{{2,1}}h_{{3,3}}\\
&&+240\,h_{{1,2}}h_{{3,1}}h_{{3,3}}+240\,h_{{2,1}}h_{{2,2}}h_{{3,3}}-672\,h_{{1,2}}{h_{{2,1}}^{2}}h_{{2,4}}+192\,h_{{2,1}}h_{{2,4}}h_{{3,1}}\\
&&+240\,h_{{1,2}}h_{{2,1}}h_{{4,3}}+240\,h_{{1,2}}h_{{2,2}}h_{{4,2}}-840\,{h_{{1,2}}^{2}}h_{{2,2}}h_{{3,2}}-35640\,{h_{{0,3}}^{2}}{h_{{1,2}}^{2}}h_{{2,1}}h_{{3,1}}\\
&&-35640\,{h_{{0,3}}^{2}}h_{{1,2}}{h_{{2,1}}^{2}}h_{{2,2}}-17952\,h_{{0,3}}h_{{0,4}}h_{{1,2}}{h_{{2,1}}^{3}}-27720\,h_{{0,3}}{h_{{1,2}}^{3}}h_{{2,1}}h_{{2,2}}\\
&&-35640\,h_{{0,3}}{h_{{1,2}}^{2}}h_{{1,3}}{h_{{2,1}}^{2}}+6480\,{h_{{0,3}}^{2}}h_{{1,2}}h_{{2,1}}h_{{4,1}}+6480\,{h_{{0,3}}^{2}}h_{{2,1}}h_{{2,2}}h_{{3,1}}\\
&&+4896\,h_{{0,3}}h_{{0,4}}{h_{{2,1}}^{2}}h_{{3,1}}+7560\,h_{{0,3}}{h_{{1,2}}^{2}}h_{{2,1}}h_{{3,2}}+7560\,h_{{0,3}}{h_{{1,2}}^{2}}h_{{2,2}}h_{{3,1}}\\
&&+6480\,h_{{0,3}}h_{{1,2}}{h_{{2,1}}^{2}}h_{{2,3}}+7560\,h_{{0,3}}h_{{1,2}}h_{{2,1}}{h_{{2,2}}^{2}}+6480\,h_{{0,3}}h_{{1,3}}{h_{{2,1}}^{2}}h_{{2,2}}\\
&&-1680\,h_{{0,3}}h_{{1,2}}h_{{2,1}}h_{{4,2}}-1680\,h_{{0,3}}h_{{1,2}}h_{{2,2}}h_{{4,1}}-1680\,h_{{0,3}}h_{{1,2}}h_{{3,1}}h_{{3,2}}\\
&&-1440\,h_{{0,3}}h_{{1,3}}h_{{2,1}}h_{{4,1}}-1680\,h_{{0,3}}h_{{2,1}}h_{{2,2}}h_{{3,2}}-1440\,h_{{0,3}}h_{{2,1}}h_{{2,3}}h_{{3,1}}\\
&&+6048\,h_{{0,4}}{h_{{1,2}}^{2}}h_{{2,1}}h_{{3,1}}+6048\,h_{{0,4}}h_{{1,2}}{h_{{2,1}}^{2}}h_{{2,2}}-1344\,h_{{0,4}}h_{{1,2}}h_{{2,1}}h_{{4,1}}\\
&&-1344\,h_{{0,4}}h_{{2,1}}h_{{2,2}}h_{{3,1}}+7560\,{h_{{1,2}}^{2}}h_{{1,3}}h_{{2,1}}h_{{2,2}}-1680\,h_{{1,2}}h_{{1,3}}h_{{2,1}}h_{{3,2}}\\
&&-1680\,h_{{1,2}}h_{{1,3}}h_{{2,2}}h_{{3,1}}-1680\,h_{{1,2}}h_{{2,1}}h_{{2,2}}h_{{2,3}}-1344\,h_{{1,2}}h_{{1,4}}h_{{2,1}}h_{{3,1}}\\
&&\left.+12960\,h_{{0,3}}h_{{1,2}}h_{{1,3}}h_{{2,1}}h_{{3,1}}+42120\,{h_{{0,3}}^{3}}h_{{1,2}}{h_{{2,1}}^{3}} \right)\\
\bar a_{8,1}&=&{\frac {1}{128}}\,\sqrt {2} \left( 1280\,h_{{2,5}}{h_{{2,1}}^{3}}+384\,h_{{6,3}}h_{{2,1}}-128\,h_{{8,2}}-1920\,h_{{0,6}}{h_{{2,1}}^{4}}-768
\,h_{{4,4}}{h_{{2,1}}^{2}}\right.\\
&&+384\,h_{{2,2}}h_{{6,2}}-24024\,{h_{{1,2}}}^{6}h_{{2,2}}+11088\,{h_{{1,2}}^{5}}h_{{3,2}}+27720\,{h_{{1,2}}^{4}}{h_{{2,2}}^{2}}-960\,{h_{{2,2}}^{2}}h_{{4,2}}\\
&&-960\,h_{{2,2}}{h_{{3,2}}^{2}}-10080\,{h_{{1,2}}^{2}}{h_{{2,2}}^{3}}-5040\,{h_{{1,2}}^{4}}h_{{4,2}}+3360\,{h_{{1,2}}^{2}}{h_{{3,2}}^{2}}+2240\,{h_{{1,2}}^{3}}h_{{5,2}}\\
&&-25920\,{h_{{0,3}}^{2}}h_{{2,2}}{h_{{3,1}}^{2}}+13056\,{h_{{0,4}}^{2}}{h_{{2,1}}^{4}}-768\,h_{{0,4}}{h_{{4,1}}^{2}}+2880\,{h_{{1,3}}^{2}}{h_{{3,1}}^{2}}+384\,h_{{1,3}}h_{{7,1}}\\
&&+2880\,{h_{{2,1}}^{2}}{h_{{2,3}}^{2}}+384\,h_{{2,3}}h_{{6,1}}+384\,h_{{3,3}}h_{{5,1}}-768\,h_{{2,4}}{h_{{3,1}}^{2}}+384\,h_{{4,1}}h_{{4,3}}\\
&&+560\,{h_{{2,2}}^{4}}+192\,{h_{{4,2}}^{2}}+6435\,{h_{{1,2}}^{8}}-20160\,{h_{{1,2}}^{3}}h_{{2,2}}h_{{3,2}}+6720\,{h_{{1,2}}^{2}}h_{{2,2}}h_{{4,2}}\\
&&+6720\,h_{{1,2}}{h_{{2,2}}^{2}}h_{{3,2}}-1920\,h_{{1,2}}h_{{2,2}}h_{{5,2}}-1920\,h_{{1,2}}h_{{3,2}}h_{{4,2}}-20160\,{h_{{1,2}}^{3}}h_{{2,1}}h_{{3,3}}\\
&&+6720\,{h_{{1,2}}^{2}}h_{{3,1}}h_{{3,3}}-1920\,h_{{1,2}}h_{{3,3}}h_{{4,1}}-1920\,h_{{2,1}}h_{{3,2}}h_{{3,3}}-1920\,h_{{2,2}}h_{{3,1}}h_{{3,3}}\\
&&-24192\,{h_{{1,2}}^{2}}{h_{{2,1}}^{2}}h_{{2,4}}+5376\,{h_{{2,1}}^{2}}h_{{2,2}}h_{{2,4}}-1536\,h_{{2,1}}h_{{2,4}}h_{{4,1}}-11520\,h_{{1,2}}h_{{1,5}}{h_{{2,1}}^{3}}\\
&&+384\,h_{{3,1}}h_{{5,3}}+3840\,h_{{1,5}}{h_{{2,1}}^{2}}h_{{3,1}}+6720\,{h_{{1,2}}^{2}}h_{{2,1}}h_{{4,3}}-1920\,h_{{1,2}}h_{{3,1}}h_{{4,3}}\\
&&-1920\,h_{{2,1}}h_{{2,2}}h_{{4,3}}+5376\,h_{{1,2}}{h_{{2,1}}^{2}}h_{{3,4}}-1536\,h_{{2,1}}h_{{3,1}}h_{{3,4}}-1920\,h_{{1,2}}h_{{2,1}}h_{{5,3}}\\
&&-20160\,h_{{0,3}}h_{{2,1}}{h_{{2,2}}^{3}}+6720\,h_{{0,3}}{h_{{2,2}}^{2}}h_{{4,1}}-1920\,h_{{0,3}}h_{{2,2}}h_{{6,1}}-25920\,{h_{{0,3}}^{2}}{h_{{2,1}}^{2}}h_{{4,2}}\\
&&-336960\,{h_{{0,3}}^{3}}{h_{{2,1}}^{3}}h_{{2,2}}+5760\,{h_{{0,3}}}^{2}h_{{2,1}}h_{{6,1}}+142560\,{h_{{0,3}}^{2}}{h_{{2,1}}^{2}}{h_{{2,2}}^{2}}+384\,h_{{0,3}}h_{{8,1}}\\
&&+77760\,{h_{{0,3}}^{3}}{h_{{2,1}}^{2}}h_{{4,1}}+77760\,{h_{{0,3}}^{2}}{h_{{2,1}}^{3}}h_{{2,3}}-185472\,{h_{{0,3}}^{2}}h_{{0,4}}{h_{{2,1}}^{4}}+384\,h_{{3,2}}h_{{5,2}}\\
&&+77760\,{h_{{0,3}}^{3}}h_{{2,1}}{h_{{3,1}}^{2}}+5760\,{h_{{0,3}}^{2}}h_{{3,1}}h_{{5,1}}+2527200\,{h_{{0,3}}^{3}}{h_{{1,2}}^{2}}{h_{{2,1}}^{3}}-960\,{h_{{1,2}}^{2}}h_{{6,2}}\\
&&+2316600\,{h_{{0,3}}^{2}}{h_{{1,2}}^{4}}{h_{{2,1}}^{2}}+142560\,{h_{{0,3}}^{2}}{h_{{1,2}}^{2}}{h_{{3,1}}^{2}}-288288\,h_{{0,4}}{h_{{1,2}}^{4}}{h_{{2,1}}^{2}}+384\,h_{{1,2}}h_{{7,2}}\\
&&-24192\,h_{{0,4}}{h_{{1,2}}^{2}}{h_{{3,1}}^{2}}-13056\,h_{{0,4}}{h_{{2,1}}^{3}}h_{{2,3}}-24192\,h_{{0,4}}{h_{{2,1}}^{2}}{h_{{2,2}}^{2}}+252720\,{h_{{0,3}}^{4}}{h_{{2,1}}^{4}}\\
&&+5376\,h_{{0,4}}{h_{{2,1}}^{2}}h_{{4,2}}+5376\,h_{{0,4}}h_{{2,2}}{h_{{3,1}}^{2}}-1536\,h_{{0,4}}h_{{2,1}}h_{{6,1}}-1536\,h_{{0,4}}h_{{3,1}}h_{{5,1}}\\
&&+63360\,h_{{0,5}}{h_{{1,2}}^{2}}{h_{{2,1}}^{3}}-11520\,h_{{0,5}}{h_{{2,1}}^{3}}h_{{2,2}}+3840\,h_{{0,5}}{h_{{2,1}}^{2}}h_{{4,1}}+3840\,h_{{0,5}}h_{{2,1}}{h_{{3,1}}^{2}}\\
&&-144144\,{h_{{1,2}}^{5}}h_{{1,3}}h_{{2,1}}+55440\,{h_{{1,2}}^{4}}h_{{1,3}}h_{{3,1}}+142560\,{h_{{1,2}}^{2}}{h_{{1,3}}^{2}}{h_{{2,1}}^{2}}+2880\,{h_{{0,3}}^{2}}{h_{{4,1}}^{2}}\\
&&-20160\,{h_{{1,2}}^{3}}h_{{1,3}}h_{{4,1}}-25920\,{h_{{1,3}}^{2}}{h_{{2,1}}^{2}}h_{{2,2}}-13056\,h_{{1,3}}h_{{1,4}}{h_{{2,1}}^{3}}+6720\,{h_{{1,2}}^{2}}h_{{1,3}}h_{{5,1}}\\
&&+5760\,{h_{{1,3}}^{2}}h_{{2,1}}h_{{4,1}}+5760\,h_{{1,3}}{h_{{2,1}}^{2}}h_{{3,3}}+6720\,h_{{1,3}}{h_{{2,2}}^{2}}h_{{3,1}}-1920\,h_{{1,2}}h_{{1,3}}h_{{6,1}}\\
&&-1920\,h_{{1,3}}h_{{2,1}}h_{{5,2}}-1920\,h_{{1,3}}h_{{2,2}}h_{{5,1}}-1920\,h_{{1,3}}h_{{3,1}}h_{{4,2}}-1920\,h_{{1,3}}h_{{3,2}}h_{{4,1}}\\
&&+55440\,{h_{{1,2}}^{4}}h_{{2,1}}h_{{2,3}}-20160\,{h_{{1,2}}^{3}}h_{{2,3}}h_{{3,1}}+6720\,{h_{{1,2}}^{2}}h_{{2,3}}h_{{4,1}}+6720\,h_{{2,1}}{h_{{2,2}}^{2}}h_{{2,3}}\\
&&-1920\,h_{{1,2}}h_{{2,3}}h_{{5,1}}-1920\,h_{{2,1}}h_{{2,3}}h_{{4,2}}-1920\,h_{{2,2}}h_{{2,3}}h_{{4,1}}-1920\,h_{{2,3}}h_{{3,1}}h_{{3,2}}\\
&&+88704\,{h_{{1,2}}^{3}}h_{{1,4}}{h_{{2,1}}^{2}}+5376\,h_{{1,2}}h_{{1,4}}{h_{{3,1}}^{2}}+5376\,h_{{1,4}}{h_{{2,1}}^{2}}h_{{3,2}}-1536\,h_{{1,4}}h_{{2,1}}h_{{5,1}}\\
&&-1536\,h_{{1,4}}h_{{3,1}}h_{{4,1}}-13056\,h_{{0,3}}h_{{2,4}}{h_{{2,1}}^{3}}+5760\,h_{{0,3}}h_{{4,3}}{h_{{2,1}}^{2}}+24960\,h_{{0,3}}h_{{0,5}}{h_{{2,1}}^{4}}\\
&&-1920\,h_{{0,3}}h_{{6,2}}h_{{2,1}}+77760\,h_{{0,3}}{h_{{1,3}}}^{2}{h_{{2,1}}^{3}}+5760\,h_{{0,3}}h_{{2,3}}{h_{{3,1}}^{2}}+6720\,h_{{0,3}}h_{{2,1}}{h_{{3,2}}^{2}}\\
&&-1920\,h_{{0,3}}h_{{3,2}}h_{{5,1}}-1920\,h_{{0,3}}h_{{3,1}}h_{{5,2}}+360360\,h_{{0,3}}{h_{{1,2}}^{6}}h_{{2,1}}-144144\,h_{{0,3}}{h_{{1,2}}^{5}}h_{{3,1}}\\
&&+55440\,h_{{0,3}}{h_{{1,2}}}^{4}h_{{4,1}}-20160\,h_{{0,3}}{h_{{1,2}}^{3}}h_{{5,1}}+6720\,h_{{0,3}}{h_{{1,2}}^{2}}h_{{6,1}}-1920\,h_{{0,3}}h_{{1,2}}h_{{7,1}}\\
&&-1920\,h_{{0,3}}h_{{4,1}}h_{{4,2}}-60480\,{h_{{1,2}}^{2}}h_{{1,3}}h_{{2,1}}h_{{3,2}}-60480\,{h_{{1,2}}^{2}}h_{{1,3}}h_{{2,2}}h_{{3,1}}\\
&&-60480\,h_{{1,2}}h_{{1,3}}h_{{2,1}}{h_{{2,2}}^{2}}+13440\,h_{{1,2}}h_{{1,3}}h_{{2,1}}h_{{4,2}}+13440\,h_{{1,2}}h_{{1,3}}h_{{2,2}}h_{{4,1}}\\
&&+13440\,h_{{1,2}}h_{{1,3}}h_{{3,1}}h_{{3,2}}+13440\,h_{{1,3}}h_{{2,1}}h_{{2,2}}h_{{3,2}}+11520\,h_{{1,3}}h_{{2,1}}h_{{2,3}}h_{{3,1}}\\
&&-51840\,h_{{1,2}}{h_{{1,3}}^{2}}h_{{2,1}}h_{{3,1}}-51840\,h_{{1,2}}h_{{1,3}}{h_{{2,1}}}^{2}h_{{2,3}}-60480\,{h_{{1,2}}^{2}}h_{{2,1}}h_{{2,2}}h_{{2,3}}\\
&&+13440\,h_{{1,2}}h_{{2,1}}h_{{2,3}}h_{{3,2}}+13440\,h_{{1,2}}h_{{2,2}}h_{{2,3}}h_{{3,1}}-48384\,{h_{{1,2}}^{2}}h_{{1,4}}h_{{2,1}}h_{{3,1}}\\
&&+10752\,h_{{1,2}}h_{{1,4}}h_{{2,1}}h_{{4,1}}+10752\,h_{{1,4}}h_{{2,1}}h_{{2,2}}h_{{3,1}}+13440\,h_{{1,2}}h_{{2,1}}h_{{2,2}}h_{{3,3}}\\
&&-48384\,h_{{1,2}}h_{{1,4}}{h_{{2,1}}^{2}}h_{{2,2}}+10752\,h_{{1,2}}h_{{2,1}}h_{{2,4}}h_{{3,1}}-51840\,{h_{{0,3}}^{2}}h_{{1,2}}h_{{3,1}}h_{{4,1}}\\
&&-1010880\,{h_{{0,3}}^{2}}h_{{1,2}}h_{{1,3}}{h_{{2,1}}^{3}}+285120\,{h_{{0,3}}^{2}}{h_{{1,2}}^{2}}h_{{2,1}}h_{{4,1}}-51840\,{h_{{0,3}}^{2}}h_{{1,2}}h_{{2,1}}h_{{5,1}}\\
&&-51840\,{h_{{0,3}}^{2}}h_{{2,1}}h_{{2,2}}h_{{4,1}}+233280\,{h_{{0,3}}^{2}}h_{{1,3}}{h_{{2,1}}^{2}}h_{{3,1}}+285120\,{h_{{0,3}}^{2}}h_{{1,2}}{h_{{2,1}}^{2}}h_{{3,2}}\\
&&-51840\,{h_{{0,3}}^{2}}h_{{2,1}}h_{{3,1}}h_{{3,2}}-1010880\,{h_{{0,3}}^{3}}h_{{1,2}}{h_{{2,1}}^{2}}h_{{3,1}}-1235520\,{h_{{0,3}}^{2}}{h_{{1,2}}^{3}}h_{{2,1}}h_{{3,1}}\\
&&-1853280\,{h_{{0,3}}^{2}}{h_{{1,2}}^{2}}{h_{{2,1}}^{2}}h_{{2,2}}-60480\,h_{{0,3}}{h_{{1,2}}^{2}}h_{{2,2}}h_{{4,1}}-51840\,h_{{0,3}}h_{{1,2}}h_{{1,3}}{h_{{3,1}}^{2}}\\
&&-51840\,h_{{0,3}}h_{{1,2}}{h_{{2,1}}^{2}}h_{{3,3}}-60480\,h_{{0,3}}h_{{1,2}}{h_{{2,2}}^{2}}h_{{3,1}}+13440\,h_{{0,3}}h_{{1,2}}h_{{2,2}}h_{{5,1}}\\
&&+13440\,h_{{0,3}}h_{{1,2}}h_{{3,1}}h_{{4,2}}+13440\,h_{{0,3}}h_{{2,1}}h_{{2,2}}h_{{4,2}}+143616\,h_{{0,3}}h_{{0,4}}{h_{{2,1}}}^{3}h_{{2,2}}\\
&&+11520\,h_{{0,3}}h_{{1,3}}h_{{3,1}}h_{{4,1}}-51840\,h_{{0,3}}{h_{{2,1}}^{2}}h_{{2,2}}h_{{2,3}}+11520\,h_{{0,3}}h_{{2,1}}h_{{3,1}}h_{{3,3}}\\
&&-39168\,h_{{0,3}}h_{{1,4}}{h_{{2,1}}^{2}}h_{{3,1}}+11520\,h_{{0,3}}h_{{1,3}}h_{{2,1}}h_{{5,1}}-39168\,h_{{0,3}}h_{{0,4}}h_{{2,1}}{h_{{3,1}}^{2}}\\
&&+11520\,h_{{0,3}}h_{{2,1}}h_{{2,3}}h_{{4,1}}-39168\,h_{{0,3}}h_{{0,4}}{h_{{2,1}}^{2}}h_{{4,1}}+285120\,h_{{0,3}}{h_{{1,2}}}^{2}{h_{{2,1}}^{2}}h_{{2,3}}\\
&&+221760\,h_{{0,3}}{h_{{1,2}}^{3}}h_{{2,1}}h_{{3,2}}-60480\,h_{{0,3}}{h_{{1,2}}^{2}}h_{{3,1}}h_{{3,2}}-51840\,h_{{0,3}}h_{{1,3}}{h_{{2,1}}^{2}}h_{{3,2}}\\
&&+13440\,h_{{0,3}}h_{{1,2}}h_{{3,2}}h_{{4,1}}+13440\,h_{{0,3}}h_{{2,2}}h_{{3,1}}h_{{3,2}}+13440\,h_{{0,3}}h_{{1,2}}h_{{2,1}}h_{{5,2}}\\
&&-933504\,h_{{0,3}}h_{{0,4}}{h_{{1,2}}}^{2}{h_{{2,1}}^{3}}-720720\,h_{{0,3}}{h_{{1,2}}^{4}}h_{{2,1}}h_{{2,2}}-1235520\,h_{{0,3}}{h_{{1,2}}^{3}}h_{{1,3}}{h_{{2,1}}^{2}}\\
&&+221760\,h_{{0,3}}{h_{{1,2}}^{3}}h_{{2,2}}h_{{3,1}}+332640\,h_{{0,3}}{h_{{1,2}}^{2}}h_{{2,1}}{h_{{2,2}}^{2}}+143616\,h_{{0,3}}h_{{1,2}}h_{{1,4}}{h_{{2,1}}}^{3}\\
&&-60480\,h_{{0,3}}{h_{{1,2}}^{2}}h_{{2,1}}h_{{4,2}}+177408\,h_{{0,4}}{h_{{1,2}}^{3}}h_{{2,1}}h_{{3,1}}+266112\,h_{{0,4}}{h_{{1,2}}^{2}}{h_{{2,1}}^{2}}h_{{2,2}}\\
&&+143616\,h_{{0,4}}h_{{1,2}}h_{{1,3}}{h_{{2,1}}}^{3}-48384\,h_{{0,4}}{h_{{1,2}}^{2}}h_{{2,1}}h_{{4,1}}+430848\,h_{{0,3}}h_{{0,4}}h_{{1,2}}{h_{{2,1}}^{2}}h_{{3,1}}\\
&&-39168\,h_{{0,4}}h_{{1,3}}{h_{{2,1}}}^{2}h_{{3,1}}+10752\,h_{{0,4}}h_{{1,2}}h_{{2,1}}h_{{5,1}}-103680\,h_{{0,3}}h_{{1,2}}h_{{1,3}}h_{{2,1}}h_{{4,1}}\\
&&+10752\,h_{{0,4}}h_{{2,1}}h_{{2,2}}h_{{4,1}}+10752\,h_{{0,4}}h_{{2,1}}h_{{3,1}}h_{{3,2}}-120960\,h_{{0,3}}h_{{1,2}}h_{{2,1}}h_{{2,2}}h_{{3,2}}\\
&&+221760\,{h_{{1,2}}^{3}}h_{{1,3}}h_{{2,1}}h_{{2,2}}+570240\,{h_{{0,3}}^{2}}h_{{1,2}}h_{{2,1}}h_{{2,2}}h_{{3,1}}-48384\,h_{{0,4}}h_{{1,2}}{h_{{2,1}}^{2}}h_{{3,2}}\\
&&+570240\,h_{{0,3}}{h_{{1,2}}^{2}}h_{{1,3}}h_{{2,1}}h_{{3,1}}+570240\,h_{{0,3}}h_{{1,2}}h_{{1,3}}{h_{{2,1}}^{2}}h_{{2,2}}+10752\,h_{{0,4}}h_{{1,2}}h_{{3,1}}h_{{4,1}}\\
&&-103680\,h_{{0,3}}h_{{1,3}}h_{{2,1}}h_{{2,2}}h_{{3,1}}-103680\,h_{{0,3}}h_{{1,2}}h_{{2,1}}h_{{2,3}}h_{{3,1}}-34560\,h_{{0,5}}h_{{1,2}}{h_{{2,1}}^{2}}h_{{3,1}}\\
&&\left.-96768\,h_{{0,4}}h_{{1,2}}h_{{2,1}}h_{{2,2}}h_{{3,1}} \right)\\
\bar a_{2,2}&=&8\,h_{{1,2}}h_{{1,3}}-24\,{h_{{0,3}}}^{2}h_{{2,1}}+8\,h_{{0,4}}h_{{2,1
}}+8\,h_{{0,3}} \left( -3\,{h_{{1,2}}}^{2}+h_{{2,2}} \right) -2\,h_{{2
,3}}\\
\bar a_{2,3}&=&1/2\,\sqrt {2} \left( -70\,h_{{0,4}}{h_{{1,2}}}^{2}+10\,{h_{{1,3}}}^{2
}+20\,h_{{1,2}}h_{{1,4}}+210\,{h_{{0,3}}}^{3}h_{{2,1}}+20\,h_{{0,5}}h_
{{2,1}} -4\,h_{{2,4}} \right.\\
&&\left.+35\,{h_{{0,3}}}^{2} \left( 9\,{h_{{1,2}}}^{2}-2\,h_{{2,2}}
 \right) +20\,h_{{0,4}}h_{{2,2}}-20\,h_{{0,3}} \left( 7\,h_{{0,4}}h_{{
2,1}}+7\,h_{{1,2}}h_{{1,3}}-h_{{2,3}} \right)\right)
\end{eqnarray*}
% [inline block 1: 1 envs, 128207 chars -> math_tex | \begin{eqnarray*} h_{10}\!\!\!&=\!\!\!&-270\,{h_{{1,2}}^{2}}{h_{{2,1}}^{4}}{h_{{0,3}}^{2}}-240\,{h_{{1,2}}^{4}}{h_{{2,1}...]



\begin{thebibliography}{99}

\bibitem{c17} A. Atabaigi, H. R. Z. Zangeneh, R. Kazemi, {Limit cycle bifurcation by perturbing a cuspidal loop of order 2 in a Hamiltonian system}, Nonlinear Anal. 75 (2012) 1945-1958.

\bibitem{c22} A. Atabaigi, H. R. Z. Zangeneh, {Bifurcation of limit cycles in small perturbations of a class of hyper-elliptic Hamiltonian systems of degree 5 with a cusp}, J. Appl. Anal. Comput.,  Vol. 1, No. 3 (2011): 299-313.

\bibitem{c1} M. Han, {Bifurcation Theory of Limit Cycles}, Sci. Press, 2013.

\bibitem{c3} M. Han, {Asymptotic expansions of Melnikov functions and limit cycle bifurcations},  Int. J. Bifurcat. Chaos,  Vol. 22, No.12 (2012) 1250296 (30 pages).

\bibitem{c7} M. Han, {On the number of limit cycles bifurcating from a homoclinic or heterclimic loop}. Sci. China Ser. A, 1993, {\bf36} (2):  113-132.

\bibitem{c8} M. Han, {Cyclicity of planar homoclinic loops and quadratic integrable systems}. Sci. China Ser. A, 1997, {\bf40} (12): 1247-1258.

\bibitem{c9} M. Han, J. Chen, {The number of limit cycles bifurcating from a pair of homoclinic loops}. Sci. China Ser. A, 2000, {\bf30} (5): 401-414.

\bibitem{c16} M. Han, J. Jiang, H. Zhu, {Limit cycles in a planar Hamiltonian systems with a nilpotent center}.  Int. J. Bifurcat. Chaos, 2008,  {\bf18} (10): 3013-3027.

\bibitem{c14} M. Han, C. Shu, J. Yang, A. Chian, {Polynimial Hamiltonian systems with a nilpotent critical point}. Advances in Space Research  2010, {\bf46}: 521-525.

\bibitem{c10} M. Han, J. Yang, A. Tarta, Y. Gao, {Limit cycles near homoclinic and heterclimic loop}. J. Dyn. Diff. Equat. (2008) {\bf20}: 923-944.

\bibitem{c2} M. Han, J. Yang, D. Xiao, {Limit cycle bifurcations near a double homoclinic loop with a nilpotent saddle}, Int. J. Bifurcat. Chaos,  Vol. 22, No.8 (2012) 1250189 (33 pages).

\bibitem{c11} M. Han, J. Yang, P. Yu, {Hopf bifurcations for near-Hamiltonian systems}. Int. J. Bifurcat. Chaos, Vol. 19, No. 12 (2009) 4117¨C4130.

\bibitem{c12} M. Han, P. Yu, {Normal forms, Melnikov functions and bifurcations of limit cycles}(Applied Mathematical Sciences Vol.181). Springer-Verlag, 2012.

\bibitem{c15} M. Han, H. Zang, J. Yang, {Limit cycle bifurcations by perturbing a cuspidal loop in a Hamiltonian syatem}. J. Differ. Equations
             {\bf246} (2009) 129-163.

\bibitem{c13} Y. Hou, M. Han, {Melnikov functions for planar near-Hamiltonian systems and Hoph bifurcations}. J. Shanghai Normal Universty(Natural Sciences), 2006, {\bf35} (1): 1-10.

\bibitem{c4} J. Li, {Hilbert's 16th problem and bifurcations of planar polynomial vector fields}, Int. J. Bifurcat. Chaos, 2003, {\bf13} (1): 47-106.

\bibitem{c18} J. Li, T. Zhang, M. Han, {Bifurcation of limit cycles from a heterclinic loop with two cusps}, Chaos, Solitons \& Fractals, Vol. 62-63,  2014, 44-54.

\bibitem{c6} R. Roussarie, {On the number of limit cycles which appear by perturbation of separatrix loop of planar vector fields}. Bol. Soc. Brasil. Mat., 1986, {\bf17} (2): 67-101.

\bibitem{c19} X. Sun, M. Han, J. Yang, {Bifurcation of limit cycles from a heterclinic loop with a cusp}, Nonlinear Anal. {\bf74} (2011) 2948-2965.

\bibitem{c21} J. Su, J. Yang, M. Han, {Hopf bifurcation of Li{\'e}nard systems by perturbing a nilpotent center},  Internat. J. Bifur. Chaos Appl. Sci. Engrg.,  {\bf22}, 1250203 (2012) [7 pages].

\bibitem{c20} H. Tian, M. Han, {Limit cycle bifurcations by perturbing a compound loop with a cusp and a nilpotent saddle}, Abstr. Appl. Anal. Vol. 2014, 15 pages.

\bibitem{c5} J. Yang, M. Han, {Limit cycles near a double Homoclinic loop}. Ann. Diff. Eqs. {\bf23}: 4 (2007), 536-545.

\bibitem{c23} J. Yang, {On the limit cycles of a kind of Li\'{e}nard system with a nilpotent center under perturbations}, J. Appl. Anal. Comput.  Vol. 2, No. 3 (2012): 325-339.


\end{thebibliography}
\end{document}